\def \bdm{\begin{displaymath}}
	\def \edm{\end{displaymath}}
\def \eps {\varepsilon}
\def \es {\emptyset}
\def \hff{\hat{F}_5}
\renewcommand \b[2] {\binom{#1}{#2}}
\def \ce {\coloneqq}
\renewcommand{\le}{\leqslant}
\renewcommand{\ge}{\geqslant}
\newtheorem{definition}{Definition}
\newtheorem{theorem}[definition]{Theorem}
\newtheorem{lemma}[definition]{Lemma}
\newtheorem{proposition}[definition]{Proposition}
\newtheorem{claim}[definition]{Claim}
\newtheorem{remark}[definition]{Remark}
\renewcommand{\ln}{\log}
\def \lnn {\ln n}
\def \slnn {\sqrt{\ln n}}
\def \lnlnn {\ln \ln n}
\def \slnndn {\slnn/n}
\def \sm {\setminus}
\def \P {\mathbb{P}}
\def \E {\mathbb{E}}
\tikzstyle{every node}=[circle, draw, fill=black!80, inner sep=0pt, minimum width=4pt]
\def\rotateclockwise#1{
	\newdimen\xrw
	\pgfextractx{\xrw}{#1}
	\newdimen\yrw
	\pgfextracty{\yrw}{#1}
	\pgfpoint{\yrw}{-\xrw}
}
\def\rotatecounterclockwise#1{
	\newdimen\xrcw
	\pgfextractx{\xrcw}{#1}
	\newdimen\yrcw
	\pgfextracty{\yrcw}{#1}
	\pgfpoint{-\yrcw}{\xrcw}
}
\def\outsidespacerpgfclockwise#1#2#3{
	\pgfpointscale{#3}{
		\rotateclockwise{
			\pgfpointnormalised{
				\pgfpointdiff{#1}{#2}}}}
}
\def\outsidespacerpgfcounterclockwise#1#2#3{
	\pgfpointscale{#3}{
		\rotatecounterclockwise{
			\pgfpointnormalised{
				\pgfpointdiff{#1}{#2}}}}
}
\def\outsidepgfclockwise#1#2#3{
	\pgfpointadd{#2}{\outsidespacerpgfclockwise{#1}{#2}{#3}}
}
\def\outsidepgfcounterclockwise#1#2#3{
	\pgfpointadd{#2}{\outsidespacerpgfcounterclockwise{#1}{#2}{#3}}
}
\def\outside#1#2#3{
	($ (#2) ! #3 ! -90 : (#1) $)
}
\def\cornerpgf#1#2#3#4{
	\pgfextra{
		\pgfmathanglebetweenpoints{#2}{\outsidepgfcounterclockwise{#1}{#2}{#4}}
		\let\anglea\pgfmathresult
		\let\startangle\pgfmathresult
		
		\pgfmathanglebetweenpoints{#2}{\outsidepgfclockwise{#3}{#2}{#4}}
		\pgfmathparse{\pgfmathresult - \anglea}
		\pgfmathroundto{\pgfmathresult}
		\let\arcangle\pgfmathresult
		\ifthenelse{180=\arcangle \or 180<\arcangle}{
			\pgfmathparse{-360 + \arcangle}}{
			\pgfmathparse{\arcangle}}
		\let\deltaangle\pgfmathresult
		
		\newdimen\x
		\pgfextractx{\x}{\outsidepgfcounterclockwise{#1}{#2}{#4}}
		\newdimen\y
		\pgfextracty{\y}{\outsidepgfcounterclockwise{#1}{#2}{#4}}
	}
	-- (\x,\y) arc [start angle=\startangle, delta angle=\deltaangle, radius=#4]
}
\def\corner#1#2#3#4{
	\cornerpgf{\pgfpointanchor{#1}{center}}{\pgfpointanchor{#2}{center}}{\pgfpointanchor{#3}{center}}{#4}
}
\def\hedgeiii#1#2#3#4{
	\outside{#1}{#2}{#4} \corner{#1}{#2}{#3}{#4} \corner{#2}{#3}{#1}{#4} \corner{#3}{#1}{#2}{#4} -- cycle
}
\def\hedgem#1#2#3#4{
	
	\outside{#1}{#2}{#4}
	\pgfextra{
		\def\hgnodea{#1}
		\def\hgnodeb{#2}
	}
	foreach \c in {#3} {
		\corner{\hgnodea}{\hgnodeb}{\c}{#4}
		\pgfextra{
			\global\let\hgnodea\hgnodeb
			\global\let\hgnodeb\c
		}
	}
	\corner{\hgnodea}{\hgnodeb}{#1}{#4}
	\corner{\hgnodeb}{#1}{#2}{#4}
	-- cycle
}
\def\hgrotate#1{
	\newdimen\x
	\pgfextractx{\x}{#1}
	\newdimen\y
	\pgfextracty{\y}{#1}
	\pgfpoint{-\y}{\x}
}
\def\hgperpr#1#2#3{
	\pgfpointscale{#3}{
		\hgrotate{
			\pgfpointnormalised{
				\pgfpointdiff{#1}{#2}}}}
}
\def\hgaddeperpr#1#2#3{
	\pgfpointadd{#2}{\hgperpr{#1}{#2}{#3}}
}
\def\hgaddsperpr#1#2#3{
	\pgfpointadd{\hgperpr{#1}{#2}{#3}}{#1}
}
\def\hgcorner#1#2#3#4{
	\pgflineto{\hgaddeperpr{#1}{#2}{#4}}
	\pgfmathanglebetweenpoints{#1}{#2}\let\anga\pgfmathresult
	\pgfpatharcto{#4}{#4}{90 + \anga}{0}{0}{\hgaddsperpr{#2}{#3}{#4}}
}
\title{On the Maximum $F_5$-free Subhypergraphs \\ of a Random Hypergraph}
\author{
Igor Araujo\footnotemark[1]
\and
J\'ozsef Balogh\thanks{Department of Mathematics, University of Illinois at Urbana-Champaign, Urbana, Illinois 61801, USA. Research is partially supported by UIUC Campus Research Board RB 22000. E-mail: \texttt{\{igoraa2, jobal, haoranl8\}@illinois.edu}.}  \thanks{Research is partially supported by NSF Grant DMS-1764123 and RTG DMS-1937241, Arnold O. Beckman Research Award (UIUC Campus Research Board RB 22000), the Langan Scholar Fund (UIUC), and the Simons Fellowship.}
\and Haoran Luo\footnotemark[1]
}
\date{}
\begin{document}
\maketitle

\begin{abstract}
Denote by $F_5$ the $3$-uniform hypergraph on vertex set $\{1,2,3,4,5\}$ with hyperedges $\{123,124,345\}$. Balogh, Butterfield, Hu, and Lenz proved that if $p > K \lnn /n$ for some large constant $K$, then every maximum $F_5$-free subhypergraph of $G^3(n,p)$ is tripartite with high probability, and showed that if $p_0 = 0.1\slnn /n$, then with high probability there exists a maximum $F_5$-free subhypergraph of $G^3(n,p_0)$ that is not tripartite. In this paper, we sharpen the upper bound to be best possible up to a constant factor. We prove that if $p > C \sqrt{\ln n} /n $ for some large constant $C$, then every maximum $F_5$-free subhypergraph of $G^3(n, p)$ is tripartite with high probability.
\end{abstract}

\section{Introduction} \label{sec::Int}
In this paper, a (hyper)graph is maximum with respect to a property if it has the maximum number of (hyper)edges among the (hyper)graphs satisfying the given property. Throughout the paper, all logarithms are in base $e$. 

One of the first results in extremal graph theory is Mantel’s Theorem~\cite{mantel1907Pro}, which states that every triangle-free graph on $n$ vertices has at most $\lfloor n^2/4 \rfloor$ edges. Additionally, the complete bipartite graph whose part sizes differ by at most one is the unique maximum triangle-free graph. Later, Tur\'an~\cite{turan1941extremalaufgabe} generalized Mantel's Theorem for all complete graphs. Denote by $K_s$ the complete graph on $s$ vertices and by $T_s(n)$ the complete $s$-partite graph on $n$ vertices where the sizes of the parts differ by at most $1$. Tur\'an's Theorem states that $T_{s-1}(n)$ is the unique maximum $K_s$-free graph on $n$ vertices. Tur\'an's Theorem can also be understood as a property of $K_n$. Namely, it claims that every maximum $K_s$-free subgraph of $K_n$ is $(s-1)$-partite.

Let $G(n, p)$ be the standard binomial model of random graphs, where each edge in $K_n$ is chosen independently with probability $p$. We say that an event occurs \emph{with high pro\-ba\-bi\-li\-ty (w.h.p.)} if its probability goes to $1$ as $n$ goes to infinity. A question related to Tur\'an's~Theo\-rem arises when $G(n,p)$ replaces the role of $K_n$, that is, for what $p = p(n)$ we have that~w.h.p.~every maximum $K_s$-free subgraph of $G(n,p)$ is $(s-1)$-partite. 

This question was first raised by Babai, Simonovits, and Spencer~\cite{babai1990extremal}, who gave an affirmative answer when $p = \frac{1}{2}$ and $s=3$. Later, DeMarco and Kahn~\cite{demarco2015mantel} determined the correct order of $p$ for $s=3$. They~\cite{demarco2015mantel} showed that if $p > K \sqrt{\ln n/n}$ for some large constant $K$, then~w.h.p.~every maximum triangle-free subgraph of $G(n, p)$ is bipartite, while if $p = 0.1\sqrt{\ln n/n}$, then this does not hold~w.h.p. Finally, DeMarco and Kahn~\cite{demarco2015turan} answered this question up to a constant factor for every $s \ge 3$.

\begin{figure}[h!]
	\begin{minipage}{0.50\textwidth}
		\centering
		\caption{The hypergraph $F_5$.}
		\begin{tikzpicture}[scale=2]
			\draw[color=white, opacity = .0, use as bounding box] (0,-1.3) rectangle (2,1.5);
			\node (a) at (0, 1) [label=right:$1$] {};  
			\node (b) at (1, 1) [label=right:$2$] {};
			\node (c) at (2, -1) [label=right:$5$] {};
			\node (d) at (2, 1) [label=right:$3$] {};
			\node (e) at  (2, 0) [label=right:$4$] {};
			
			\begin{pgfonlayer}{bg}
				\draw[color=blue, very thick] \hedgeiii{d}{e}{c}{3mm};
				
				\draw[color=red, very thick] \hedgeiii{a}{b}{d}{2.5mm};
				
				\draw[color=green, very thick] \hedgeiii{a}{b}{e}{2mm};
			\end{pgfonlayer}
		\end{tikzpicture}
	\end{minipage}
	\begin{minipage}{0.50\textwidth}
		\centering
		\caption{The hypergraph $K_4^{-}$.}
		\begin{tikzpicture}[scale=2]
			\draw[color=white, opacity = .0, use as bounding box] (0,-1.3) rectangle (2,1.5);
			\node (a) at (0, 1) [label=right:$1$] {};  
			\node (b) at (1, 1) [label=right:$2$] {};
			\node (d) at (2, 1) [label=right:$3$] {};
			\node (e) at  (2, 0) [label=right:$4$] {};
			\begin{pgfonlayer}{bg}
				\draw[color=blue, very thick] \hedgeiii{d}{e}{b}{3mm};
				
				\draw[color=red, very thick] \hedgeiii{a}{b}{d}{2.5mm};
				
				\draw[color=green, very thick] \hedgeiii{a}{b}{e}{2mm};
			\end{pgfonlayer}
		\end{tikzpicture}
	\end{minipage}
\end{figure}

Similar problems were also  considered for hypergraphs. Denote by $K_4^{-}$ the hypergraph obtained from the complete $3$-uniform hypergraph on four vertices by removing one hyperedge. Let $F_5$, which is often called the \emph{generalized triangle}, be the hypergraph on vertex set $\{1,2,3,4,5\}$ with hyperedges $\{123,124,345\}$. Denote by $S(n)$ the complete $3$-partite $3$-uniform hypergraph on $n$ vertices whose parts have sizes $\left \lfloor n/3 \right \rfloor$, $\left\lfloor (n+1)/3 \right\rfloor$, and $\left\lfloor (n+2)/3 \right\rfloor$, and let $s(n) \ce $ $\left\lfloor n/3 \right\rfloor  \cdot \left\lfloor (n+1)/3 \right\rfloor \cdot \left\lfloor (n+2)/3 \right\rfloor$ be the number of hyperedges in $S(n)$. Bollob{\'a}s~\cite{bollobas1974three} proved that $S(n)$ is the unique maximum $\{K_4^{-}, F_5\}$-free $n$-vertex hypergraph. Frankl and F{\"u}redi~\cite{frankl1983new} proved that, for $n\ge 3000$, the maximum number of hyperedges in an $n$-vertex $F_5$-free $3$-uniform hypergraph is $s(n)$. 

The random version of this theorem was first studied by Balogh, Butterfield, Hu, and Lenz~\cite{balogh2016mantel}. Let $G^3(n,p)$ be the random $3$-uniform hypergraph on vertex set $[n] \ce \{1,2,\ldots, n\}$, where each triple is included with probability $p$ independently of each other. Note that when $p$ is very small, $G^3(n,p)$ itself is tripartite and hence $F_5$-free~w.h.p. Therefore, the interesting case is when $p$ is sufficiently large. In~\cite{balogh2016mantel}, it was proved that if $p > K \ln n /n$ for some large constant $K$, then~w.h.p.~every maximum $F_5$-free subhypergraph of $G^3(n, p)$ is tripartite, and it was conjectured that it suffices to require only $p > C \sqrt{\ln n} /n$ for some large constant $C$. In this paper, we verify this conjecture. This is best possible up to the constant factor, as it was also shown in~\cite{balogh2016mantel} that when $p = 0.1 \sqrt{\ln n}/n$, then~w.h.p.~there is a maximum $F_5$-free subhypergraph of $G^3(n, p)$ that is not tripartite.

\begin{theorem} \label{thm::main}
There exists a constant $C>0$ such that if $p > C \sqrt{\ln n}/n$, then~w.h.p.~every maximum $F_5$-free subhypergraph of $G^3(n,p)$ is tripartite.
\end{theorem}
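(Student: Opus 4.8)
The plan is to follow the DeMarco–Kahn strategy from the graph case, adapted to the $3$-uniform setting via the container/stability machinery available for $F_5$. First, I would invoke a sparse stability result: since $F_5$ has the Andrásfai-type structure making $S(n)$ a very rigid extremizer (Keevash–Mubayi), one shows that w.h.p.\ every $F_5$-free subhypergraph of $G^3(n,p)$ with at least $(1-o(1))\,s(n)\,p$ edges can be made tripartite by deleting $o(p n^3)$ edges, provided $p \gg \log n / n$ suffices — but for the sharp bound one needs a finer, more ``local'' stability statement. So the real work is to bootstrap from approximate tripartiteness to exact tripartiteness.

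The core of the argument is an entropy/counting comparison, exactly as in \cite{demarco2015mantel}. Fix a maximum $F_5$-free subhypergraph $\mathcal{H}$ of $G^3(n,p)$ and suppose for contradiction it is not tripartite. By stability there is a tripartition $V = V_1 \cup V_2 \cup V_3$ such that all but a small set of edges of $\mathcal{H}$ are ``crossing'' (one vertex in each part). Let $B$ be the set of non-crossing edges of $\mathcal H$ present in $G^3(n,p)$; since $\mathcal H$ is maximum, it must contain \emph{every} crossing edge of $G^3(n,p)$ that does not create an $F_5$ with the edges of $B$, so $\mathcal{H}$ is essentially determined by $B$ together with the ``defect'' — the crossing triples of $G^3(n,p)$ forbidden by $B$. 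The optimality of $\mathcal{H}$ forces: $|B| \ge |\text{(crossing edges of }G^3(n,p)\text{ killed by }B)|$, and $B$ must contain no $F_5$ among itself either. One then argues that a single non-crossing edge $e$ of $G^3(n,p)$, together with many crossing edges, forms copies of $F_5$: a non-crossing triple together with suitable crossing triples is an $F_5$, so each $e \in B$ kills on the order of $p n^2$ crossing edges in expectation, while $|B|$ can be taken $o(pn^3)$ — this already shows $B$ must be essentially empty unless the ``killed'' crossing edges are highly clustered. The remaining case, where $B$ is small but the structure is still non-tripartite, is handled by a union bound over the (few, by containers) possible ``shapes'' of $B$ and the event that $G^3(n,p)$ fails to contain enough crossing edges to punish $B$; the threshold $p \asymp \sqrt{\log n}/n$ is exactly where the number of $F_5$-copies through a fixed non-crossing edge, which scales like $p^2 n^2$, balances against the $\log n$ coming from the union bound over $n$ choices of a bad vertex.

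Concretely, I would structure it as follows. (i) Establish a ``robust stability'' lemma: w.h.p., in $G^3(n,p)$ with $p \ge C\sqrt{\log n}/n$, every $F_5$-free subhypergraph of size $\ge (1-\delta) s(n) p$ differs from a tripartite one in at most $\delta' p n^3$ edges, using the hypergraph container method of Balogh–Morris–Samotij / Saxton–Thomason together with the Keevash–Mubayi stability theorem. (ii) Given a maximum $F_5$-free $\mathcal H$ with associated near-tripartition $(V_1,V_2,V_3)$, define $B$ and show via the exchange/optimality argument that either $B = \emptyset$ (so $\mathcal H$ is tripartite, done) or the number of crossing edges of $G^3(n,p)$ destroyed by $B$ is at most $|B|$. (iii) Show that any nonempty ``legal'' $B$ with this property must actually be concentrated near a single vertex $v$ — i.e., w.h.p.\ there is no vertex $v$ and small set of triples at $v$ that is ``self-sufficient'' — and that this event has probability $o(1/n)$ per vertex, by a direct second-moment or Janson-type computation counting $F_5$'s of the form (one edge of $B$) $+$ (two crossing edges). (iv) Union bound over $v \in [n]$ and over the bounded number of container-classes to conclude.

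The main obstacle I anticipate is step (iii): translating the graph-case ``fortified'' structure near a single vertex into the hypergraph setting. In DeMarco–Kahn the bad configuration is a vertex whose ``wrong-side'' neighborhood is large, and one must show such a neighborhood cannot be ``defended'' — here the analogous object is a link graph inside a part, and one must control how $F_5$-copies spanning one non-crossing triple and two crossing triples interact; getting the probability bound down to $o(1/n)$ with the constant $C$ absorbing the $\sqrt{\log n}$ factor requires a careful, essentially tight, large-deviation estimate rather than a crude first-moment bound, and this is where the bulk of the technical effort will lie. A secondary difficulty is that $F_5$, unlike a triangle, is not $3$-partite-\emph{uniquely} rigid in the same elementary way, so the stability input in step (i) must be invoked in its sparse random form, which is where the container method enters.
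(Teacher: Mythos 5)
Your high-level skeleton (sparse stability to get an approximate tripartition, then an exchange argument counting copies of $F_5$ that use one non-crossing edge and two crossing edges, then a union bound) is the right skeleton and matches the paper's. But two of the pieces you flag as ``the bulk of the technical effort'' are the ones where the plan, as stated, does not go through, and the paper's resolution is essentially different from what you propose.

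First, your step (iii) reduces the bad structure to a \emph{single} vertex $v$ and a ``self-sufficient'' link at $v$, and you want a $o(1/n)$-per-vertex bound followed by a union bound over $v$ (and a ``bounded number of container classes''). This reduction is not available: the non-crossing edges in $H_1$ may be diffusely spread over $V_1$, and the overcounting you must control (how many $\hat F_5$'s hit a given missing crossing edge) is governed not by a vertex but by the \emph{maximum degree} of the shadow graph $J$ of $H_1$ restricted to $V_1$. The paper splits $V_1$ by degree in $J$ into $S$ and $V_1\setminus S$, further splits $S$ into $S_1$ and $S_2$ by crossing degree, decomposes $H_1$ into three classes $H_1(1),H_1(2),H_1(3)$ accordingly, and proves a different lower bound on $|\bar H_\pi|$ for each class (Lemmas~\ref{lem::lemma17},~\ref{lem::lemma161},~\ref{lem::lemma181}, and their analogues). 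There is also no ``bounded number of container classes'' to union over; instead, all the needed concentration statements are pushed into a finite family of lemmas (Lemmas~\ref{lem::lemma5p}--\ref{lem::lemma9}) that hold simultaneously w.h.p.\ for \emph{all} relevant choices, and the deterministic case analysis is done afterward. Second, you silently assume that every pair $\{w_1,w_2\}$ in a part has $\Theta(p^2n^2)$ common crossing $G$-neighbors, so each non-crossing edge spawns $\Theta(p^2n^2)$ copies of $F_5$. At $p\asymp\sqrt{\ln n}/n$ this expected common-neighborhood size is $\Theta(\ln n/n^2)\cdot n^2=\Theta(\ln n)$ — too small to concentrate, so there \emph{will} be exceptional pairs. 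The paper isolates these as $Q(\pi)$ and needs a separate mechanism, Proposition~\ref{prop::lemma14}, showing that if $Q(\pi)\neq\emptyset$ then the tripartite optimum $t(G)$ strictly beats $|G_\pi|$; without this your comparison breaks on exactly the bad pairs. Finally, the actual source of the improvement from $\ln n/n$ to $\sqrt{\ln n}/n$ is not a Janson-type second-moment estimate but a sharpened codegree bound $d(x,y)\le pn\sqrt{\ln n}/\ln\ln n$ (the $\ln\ln n$ saving is essential; see Remark~\ref{rem::lnlnnintheproof}) together with the parameterized concentration Lemma~\ref{lem::lemma12} and the explicit verification that several $(s,r,i)$ triples lie in the admissible set $O$. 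Your proposal correctly senses that a ``careful, essentially tight'' estimate is needed here, but does not identify what it is, and the Janson route you sketch would face severe dependency issues among overlapping copies of $F_5$.
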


Our approach will follow the general structure of the proof of the main result of~\cite{balogh2016mantel}. Several key lemmas are improved and adapted for this smaller $p$. 
In particular, in~\cite{balogh2016mantel}, an easier version of codegree concentration was proved using Chernoff's bound with the larger $p$. Here, for the smaller $p$, we need a stronger statement, not only using Chernoff's bound (cf. Lemmas~\ref{lem::lemma5p} and~\ref{lem::lemma5pn}). 
As typical with the probabilistic method, one must fight to avoid applying the union bound when the concentration is not strong enough. This is the most challenging and technical part of the proof, see Remarks~\ref{rem::maindiff} and~\ref{rem::lnlnnintheproof} for more details on that.
We trust that the new ideas used in the proof could be useful for other problems when one has to beat the union bound. 

The rest of the paper is structured as follows. In Section~\ref{sec::Pre}, we will introduce the notation and lemmas needed. In Section~\ref{sec::Pro}, we give the proof of Theorem~\ref{thm::main}.

\section{Preliminaries} \label{sec::Pre}
To improve readability, as it is standard in the literature, we will usually pretend that large numbers are integers to avoid using essentially irrelevant floor and ceiling symbols. We often use the standard upper bound $\b{n}{k} \le (\frac{en}{k})^k$ for binomial coefficients. 
We will use $xy$ to stand for set $\{x,y\}$ and $xyz$ to stand for set $\{x,y,z\}$. We write $x = (1 \pm c)y$ for $(1 - c)y \le x \le (1 + c)y$.

We use $G$ for $G^3(n,p)$ hereinafter and denote by $t(G)$ the number of hyperedges in a maximum tripartite subhypergraph of $G$.

We will always assume that the hypergraphs are on vertex set $[n]=\{1,\ldots,n\}$, so we can identify a hypergraph $H$ by its hyperedges, and $|H|$ stands for the number of hyperedges of $H$. We use $\pi = (V_1,V_2,V_3)$ for a $3$-partition of $[n]$. We say a $3$-partition $\pi$ is \emph{balanced} if every part has size $(1 \pm 10^{-10}) n/3$. Denote by $K_\pi$ the set of triples with exactly one vertex in each part of $\pi$. Let $G_\pi := G \cap K_\pi$. For a hypergraph $H\subseteq G$, let $H_i \ce \{e \in H\;:\; |e \cap V_i| \ge 2\}$ for $i=1,2,3$. Let $H_\pi \ce H \cap K_\pi$ and $\bar{H}_\pi \ce G_\pi \sm H_\pi$. We will call hyperedges in $H_\pi$ the \emph{crossing hyperedges} of $H$, and the hyperedges in $\bar{H}_\pi$ the \emph{missing crossing hyperedges} of $H$.

For a hypergraph $H$, a partition $\pi = (V_1,V_2,V_3)$ of $[n]$, vertices $v,v'\in [n]$, and subsets of vertices $S,T\subseteq [n]$, let
\begin{itemize}
\item $N_{S,T}^H(v) \ce \{ yz : y\in S, z\in T, vyz\in H\}$ be the \emph{link graph} of $v$ between $S$ and $T$,
\item $d_{S,T}^H(v) \ce |N^H_{S,T}(v)|$ be the \emph{degree} of $v$ between $S$ and $T$,
\item $N_{S}^H(v,v') \ce\{z : z \in S, vv'z \in H\}$ be the set of \emph{neighbors} of $v$ and $v'$ in $S$,
\item $d^H_S(v,v') \ce |N_{S}^H (v,v')|$ be the \emph{codegree} of $v$ and $v'$ in $S$, and
\item $L_{S,T}^H(v,v') \ce N^H_{S,T}(v) \cap N^H_{S,T}(v')$ be the \emph{common neighborhood} of $v$ and $v'$ between $S$ and $T$. 
\end{itemize}
When $S$ or $T$ is $[n]$ or $H = G$, we omit to write $S$, $T$, or $G$ when there is no ambiguity. When $S$ or $T$ is $V_i$, we often just use $i$ in the subscript to stand for $V_i$. For example, $d(x)$ is just the degree of $x$ in $G$ and $N^H_{1}(y,z)$ is the neighbors of $y$ and $z$ in $V_1$ in $H$. Finally, define $Q(\pi) \ce \{xy \subset V_1: |L_{2,3}(x,y)|< 0.8p^2n^2/9\}$.

The first proposition we need is the following result from~\cite{balogh2016mantel}, which is a special case of a general transference result of Conlon and Gowers~\cite{conlon2016combinatorial}, and as Samotij observed~\cite{samotij2014stability}, of Schacht~\cite{schacht2016extremal}. It was also proved by the hypergraph container method \cites{balogh2015independent, saxton2015hypergraph}.

\begin{proposition}\label{prop::sta}
For every $\delta > 0$, there exist $\eps > 0$ and $C > 0$ such that if $p > C /n$, then the following statement is true. Let $H$ be a maximum $F_5$-free subhypergraph of $G$ and $\pi$ be a $3$-partition of $[n]$ maximizing $|H_\pi|$. Then, we have that w.h.p.~$\pi$ is balanced, $|H| \ge (2/9 - \eps)p \b{n}{3}$, and $|H \sm H_\pi| \le \delta p n^3$.
\end{proposition}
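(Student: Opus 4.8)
The plan is to extract two of the three conclusions from the literature and deduce the third by elementary concentration. Fix $\delta > 0$; put $\delta' \ce \delta/3$, and let $\eps' = \eps'(\delta') > 0$ be the constant furnished by the deterministic $F_5$-stability theorem of Frankl--F\"uredi~\cite{frankl1983new} and Keevash--Mubayi~\cite{keevash2004stability}: every $n$-vertex $F_5$-free hypergraph with at least $(2/9 - \eps')\b{n}{3}$ hyperedges becomes tripartite after deleting at most $\delta' n^3$ hyperedges. Set $\eps \ce \eps'/3$ and let $C$ be large enough for the Chernoff bounds below. The lower bound on $|H|$ is the easy part: if $\pi^\ast$ is the balanced partition of $[n]$ then $G_{\pi^\ast} \subseteq G$ is tripartite, hence $F_5$-free, so $|H| \ge |G_{\pi^\ast}|$; and $|G_{\pi^\ast}|$ is binomial with $s(n) = (1-o(1))\tfrac{2}{9}\b{n}{3}$ trials and success probability $p$, so Chernoff gives $|G_{\pi^\ast}| \ge (2/9 - \eps)p\b{n}{3}$ except with probability $\exp(-\Omega(pn^3)) = \exp(-\Omega(Cn^2)) \to 0$.

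For the bound $|H \sm H_\pi| \le \delta p n^3$ we invoke the hypergraph container method of Balogh--Morris--Samotij and Saxton--Thomason~\cites{balogh2015independent,saxton2015hypergraph} (equivalently the transference theorems of Conlon--Gowers~\cite{conlon2016combinatorial} and Schacht~\cite{schacht2016extremal}). Together with the deterministic stability above and the $F_5$-removal lemma, this produces a family $\mathcal C$ of at most $\exp(o(n^2))$ hypergraphs on $[n]$ such that every $F_5$-free hypergraph lies inside some $\mathcal C_j \in \mathcal C$, and each $\mathcal C_j$ is \emph{near-tripartite}, i.e.\ $|\mathcal C_j \sm K_{\pi_j}| \le \delta' n^3$ for some $3$-partition $\pi_j$, or else \emph{sparse}, i.e.\ $|\mathcal C_j| \le (2/9 - \eps')\b{n}{3}$. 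Applying Chernoff to $|G \cap \mathcal C_j|$ and to $|G \cap (\mathcal C_j \sm K_{\pi_j})|$ for each $j$ — here $p > C/n$ enters, since it forces $p\b{n}{3} = \omega(n^2)$, so each such count is within a factor $1 + o(1)$ of its mean except with probability $\exp(-\omega(n^2))$, and a union bound over $\mathcal C$ succeeds w.h.p.\ — we find that a sparse container has $|G \cap \mathcal C_j| \le (1 + o(1))p(2/9 - \eps')\b{n}{3} < (2/9 - \eps)p\b{n}{3} \le |H|$ and hence cannot contain $H$. Therefore $H \subseteq \mathcal C_j$ for some near-tripartite $\mathcal C_j$, and
\[
|H \sm H_{\pi_j}| \;=\; |H \cap (G \sm K_{\pi_j})| \;\le\; |G \cap (\mathcal C_j \sm K_{\pi_j})| \;\le\; (1 + o(1))\,p\,\delta' n^3 \;\le\; \delta p n^3 .
\]
Since $\pi$ maximizes $|H_\pi|$, this gives $|H \sm H_\pi| \le |H \sm H_{\pi_j}| \le \delta p n^3$.

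Balancedness then follows by combining the two established facts. We have $|H_\pi| = |H| - |H \sm H_\pi| \ge (2/9 - \eps)p\b{n}{3} - \delta p n^3 \ge (1/27 - \eta)p n^3$ for a constant $\eta$ that can be made as small as desired by taking $\eps$ and $\delta$ small. On the other hand $|H_\pi| \le |G_\pi| = |G \cap K_\pi|$. A union bound over the at most $3^n$ partitions — using $p > C/n$ once more: if $|K_\pi| \le n^2$ then $|G_\pi| \le |K_\pi| \le n^2 = o(pn^3)$ trivially, while if $|K_\pi| > n^2$ then $|G_\pi| \le (1 + 10^{-20})p|K_\pi|$ fails with probability $\exp(-\Omega(p|K_\pi|)) = \exp(-\Omega(Cn))$, which beats $3^n$ for $C$ large — shows that w.h.p.\ $|G_\pi| \le (1 + 10^{-20})p|K_\pi|$ for every $\pi$ with $|K_\pi| > n^2$. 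If $\pi$ were not balanced, then the AM--GM inequality gives $|K_\pi| = |V_1||V_2||V_3| \le (1 - c)n^3/27$ for an absolute constant $c = c(10^{-10}) > 0$, whence $|H_\pi| \le (1 + 10^{-20})(1-c)pn^3/27 < (1/27 - \eta)pn^3$ once $\eta < c/30$, contradicting the lower bound on $|H_\pi|$. Hence $\pi$ is balanced.

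The only substantial ingredient is the transference step of the second paragraph — upgrading the classical deterministic $F_5$-stability theorem to its sparse random analogue. The deterministic input is standard, and the upgrade is provided off the shelf by the container theorems (or by Conlon--Gowers/Schacht); what one actually has to check is that $F_5$ satisfies the hypotheses of those results and that $p > C/n$ lies above the threshold they require, which it does — and, conveniently, $p > C/n$ is also exactly the slack $p\b{n}{3} = \omega(n^2)$ needed to beat the union bounds over the $\exp(o(n^2))$ containers and over the $3^n$ partitions. Everything else is routine bookkeeping with Chernoff's inequality.
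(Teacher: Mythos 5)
The paper does not prove Proposition~\ref{prop::sta}; it quotes it as a special case of the sparse transference theorems of Conlon--Gowers and Schacht, also obtainable from hypergraph containers. Your skeleton --- lower-bound $|H|$ via a random balanced tripartite hypergraph, use a container dichotomy between sparse and near-tripartite containers, and read off balancedness from the edge-count lower bound via AM--GM --- is the right one, and the first and last steps are fine. The union-bound step, however, contains a genuine gap. You assert that $p\binom{n}{3} = \omega(n^2)$, hence a per-container Chernoff failure probability of $\exp(-\omega(n^2))$, and a container family of size $\exp(o(n^2))$. Neither holds: for $p = C/n$ with a fixed constant $C$, $p\binom{n}{3} = \Theta(Cn^2)$, so Chernoff gives only $\exp(-\Theta(Cn^2))$; and for $F_5$, which has $m_3(F_5)=1$, the fingerprints have size $\Theta_\delta(n^2)$, so the standard container theorem produces $\exp(\Theta(n^2\log n))$ containers. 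The naive product $\exp(\Theta(n^2\log n))\cdot\exp(-\Theta(Cn^2))$ does not vanish for any fixed $C$, so the union bound as written does not close.

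The standard repair --- and the real content of the theorems you cite --- is that each container $\mathcal{C}(S)$ is determined by a small fingerprint $S$ with $S\subseteq H\subseteq G$; one unions over fingerprints rather than containers and gains the factor $p^{|S|}$ from the event $S\subseteq G$. Then $\sum_{s\le O_\delta(n^2)}\binom{\binom{n}{3}}{s}p^s\exp(-\Theta(\delta Cn^2)) = \exp\bigl(O_\delta(n^2\log C)\bigr)\cdot\exp\bigl(-\Theta(\delta Cn^2)\bigr)$, which is $o(1)$ once $C$ is a sufficiently large constant (depending on $\delta$). If you prefer not to re-run the container machinery, the cleanest route is exactly what the paper does: quote the sparse stability theorem as a black box, checking only that $m_3(F_5)=1$ so the required threshold $n^{-1/m_3(F_5)}=n^{-1}$ matches $p>C/n$.
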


The following concentration results are also used in~\cite{balogh2016mantel}. Lemma~\ref{lem::che} is the standard Chernoff's bound. Lemmas~\ref{lem::vD} and~\ref{lem::degCro} are standard properties of random hypergraphs, which are direct applications of Lemma~\ref{lem::che} and the union bound.
\begin{lemma} \label{lem::che}
Let $Y$ be the sum of mutually independent indicator random variables, and let $\mu = \E[Y]$. For every $\eps > 0$, we have
\bdm
\P[|Y - \mu| > \eps \mu] < 2e^{-c_\eps \mu},
\edm
where $c_\eps = \min\left\{ -\ln\left(e^\eps(1+\eps)^{-(1+\eps)}\right),\, \eps^2/2\right\}$.
\end{lemma}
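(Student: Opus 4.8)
The plan is to run the standard exponential-moment (Chernoff) argument separately on the two tails and then combine them by a union bound. Write $Y=\sum_i Y_i$, where the $Y_i$ are independent $\{0,1\}$-valued with $\P[Y_i=1]=p_i$, so that $\mu=\sum_i p_i$. The first step is the moment generating function estimate: for every $t\in\mathbb{R}$, independence together with the inequality $1+x\le e^x$ gives
\[
\E\bigl[e^{tY}\bigr]=\prod_i\bigl(1+p_i(e^t-1)\bigr)\le\prod_i e^{p_i(e^t-1)}=e^{\mu(e^t-1)}.
\]

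For the upper tail, apply Markov's inequality to $e^{tY}$ with the optimal choice $t=\ln(1+\eps)>0$:
\[
\P\bigl[Y\ge(1+\eps)\mu\bigr]\le e^{-t(1+\eps)\mu}\,\E\bigl[e^{tY}\bigr]\le\exp\!\bigl(\mu\bigl(e^{t}-1-t(1+\eps)\bigr)\bigr)=\Bigl(\tfrac{e^{\eps}}{(1+\eps)^{1+\eps}}\Bigr)^{\mu}=e^{-c_1\mu},
\]
where $c_1\ce-\ln\!\bigl(e^{\eps}(1+\eps)^{-(1+\eps)}\bigr)=(1+\eps)\ln(1+\eps)-\eps\ge0$. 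For the lower tail (we may assume $0<\eps<1$, since $Y\ge0$ forces $\P[Y<(1-\eps)\mu]=0$ otherwise), apply Markov to $e^{-sY}$ with $s=-\ln(1-\eps)>0$; the same computation yields $\P[Y\le(1-\eps)\mu]\le\bigl(e^{-\eps}(1-\eps)^{-(1-\eps)}\bigr)^{\mu}$. To replace this exponent by the cleaner $\eps^{2}/2$ it suffices to verify the elementary inequality $(1-\eps)\ln(1-\eps)\ge-\eps+\eps^{2}/2$, which follows by expanding $\ln(1-\eps)=-\sum_{k\ge1}\eps^{k}/k$ and observing that the surviving terms are nonnegative; hence $\P[Y\le(1-\eps)\mu]\le e^{-\eps^{2}\mu/2}$.

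Combining the two estimates via a union bound gives
\[
\P\bigl[|Y-\mu|>\eps\mu\bigr]\le\P\bigl[Y>(1+\eps)\mu\bigr]+\P\bigl[Y<(1-\eps)\mu\bigr]\le e^{-c_1\mu}+e^{-\eps^{2}\mu/2}\le 2e^{-c_\eps\mu},
\]
with $c_\eps=\min\{c_1,\eps^{2}/2\}$, exactly as claimed. I do not expect any genuine obstacle here: the argument is entirely standard, and the only mildly technical point is the one-line inequality $(1-\eps)\ln(1-\eps)\ge-\eps+\eps^{2}/2$ used to simplify the lower-tail exponent. (If one wanted a fully symmetric statement one could also lower-bound $c_1$ by, say, $\eps^{2}/(2+2\eps/3)$, but since the lemma keeps $c_1$ inside the minimum this is unnecessary.)
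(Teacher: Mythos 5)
Your proof is correct and is the standard Chernoff argument (MGF bound via $1+x\le e^x$, Markov with the optimal exponent $t=\ln(1\pm\eps)$, and the elementary series estimate $(1-\eps)\ln(1-\eps)\ge-\eps+\eps^2/2$ to clean up the lower tail). The paper itself does not prove this lemma but simply cites it as the standard Chernoff bound from an earlier reference, so there is no alternative route to compare against; your write-up matches the canonical proof one would expect.
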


\begin{lemma} \label{lem::vD}
For every $\eps >0$, there exists a positive constant $C$ such that if $p > C \ln n / n^2$, then~w.h.p.~for every vertex $v$, we have $d(v) = (1 \pm \eps) pn^2/2$.
\end{lemma}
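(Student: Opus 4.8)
The plan is to handle each vertex separately with the Chernoff bound of Lemma~\ref{lem::che} and then finish by a union bound over the $n$ vertices; there is nothing subtle here beyond getting the order of quantifiers right. We may assume $0 < \eps < 1$, since the statement for $\eps$ follows from the statement for any smaller value. Fix a vertex $v$. The degree $d(v)$ counts the pairs $\{y,z\} \in \binom{[n]\sm\{v\}}{2}$ with $vyz \in G$, so $d(v)$ is a sum of $\binom{n-1}{2}$ mutually independent indicator random variables, with mean $\mu \ce \E[d(v)] = p\binom{n-1}{2}$. Since $\binom{n-1}{2}/(n^2/2) = (1-1/n)(1-2/n) \to 1$, for all large $n$ we have $\mu = (1 \pm \eps/3)\,pn^2/2$, so it suffices to prove that w.h.p. $d(v) = (1 \pm \eps/3)\mu$ for every $v$ (using $(1\pm\eps/3)^2 \subseteq (1\pm\eps)$ for $\eps<1$).

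Applying Lemma~\ref{lem::che} to $d(v)$ with $\eps/3$ in place of $\eps$ gives
\bdm
\P\bigl[\,|d(v) - \mu| > (\eps/3)\mu\,\bigr] < 2e^{-c_{\eps/3}\,\mu},
\edm
where $c_{\eps/3} > 0$ depends only on $\eps$. Because $p > C\ln n/n^2$, we have $\mu = p\binom{n-1}{2} \ge \tfrac{C}{3}\lnn$ once $n$ is large, so choosing $C$ large enough (depending only on $\eps$) that $\tfrac{C}{3}\,c_{\eps/3} \ge 2$ makes the right-hand side at most $2n^{-2}$. A union bound over the $n$ choices of $v$ then shows that with probability at least $1 - 2n^{-1} \to 1$ every vertex $v$ satisfies $d(v) = (1 \pm \eps/3)\mu = (1 \pm \eps)\,pn^2/2$, as desired.

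I expect no real obstacle: this is the textbook "concentration for a fixed vertex, then union bound" argument. The only bookkeeping is the factor-$3$ slack, which simultaneously absorbs the gap between $\binom{n-1}{2}$ and $n^2/2$ and leaves room to convert the multiplicative error; and the choice of $C$ must be made \emph{after} $\eps$ (hence after $c_{\eps/3}$) so that $c_{\eps/3}\mu$ dominates $2\lnn$.
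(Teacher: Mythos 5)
Your proof is correct, and since the paper simply cites \cite{balogh2016mantel} for this lemma as a standard property of random hypergraphs rather than proving it, there is no proof to compare against beyond the standard argument you give. Your Chernoff-plus-union-bound reasoning, including the factor-$3$ slack to pass from $\binom{n-1}{2}$ to $n^2/2$ and the choice of $C$ large enough (after $\eps$) so that $c_{\eps/3}\mu \ge 2\lnn$, is exactly the textbook route and is sound.
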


\begin{lemma} \label{lem::degCro}
For every $\eps >0$, there exists a positive constant $C$ such that if $p > C/n$, then~w.h.p.~for every $3$-partition $\pi = (V_1,V_2,V_3)$ with $|V_2|,|V_3| \ge n/20$ and every vertex $v \in V_1$, we have $d_{2,3}(v) = (1 \pm \eps) p |V_2||V_3|$.
\end{lemma}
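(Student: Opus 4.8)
The plan is a routine Chernoff-bound-plus-union-bound computation. First I would fix an ordered $3$-partition $\pi = (V_1,V_2,V_3)$ with $|V_2|,|V_3| \ge n/20$ together with a vertex $v \in V_1$. Since $v \notin V_2 \cup V_3$ and $V_2 \cap V_3 = \es$, the quantity $d_{2,3}(v) = |N_{2,3}(v)|$ is exactly the number of the $|V_2||V_3|$ triples $\{v,y,z\}$ with $y \in V_2$ and $z \in V_3$ that lie in $G$. Each such triple belongs to $G$ independently with probability $p$, so $d_{2,3}(v)$ is a sum of $|V_2||V_3|$ mutually independent indicator random variables with mean $\mu \ce p|V_2||V_3|$, and the size assumption gives $\mu \ge p n^2/400$.

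Next I would apply Lemma~\ref{lem::che} to this sum with the given $\eps$, obtaining
\bdm
\P\big[\, |d_{2,3}(v) - \mu| > \eps \mu \,\big] < 2 e^{-c_\eps \mu} \le 2 e^{-c_\eps p n^2/400},
\edm
where $c_\eps > 0$ depends only on $\eps$. Now I would take a union bound over all pairs $(\pi,v)$ in which $\pi$ satisfies the size condition and $v \in V_1$; there are at most $3^n \cdot n$ such pairs (at most $3^n$ ordered $3$-partitions, each with fewer than $n$ vertices in $V_1$), so
\bdm
\P\big[\, \text{some such } (\pi,v) \text{ has } d_{2,3}(v) \ne (1\pm\eps)p|V_2||V_3| \,\big] \le 2 n \cdot 3^n \cdot e^{-c_\eps p n^2/400}.
\edm
Since $p > C/n$ we have $p n^2/400 > C n/400$, so the right-hand side is at most $2n\exp\!\big(n\ln 3 - c_\eps C n/400\big)$; choosing $C$ large enough (depending only on $\eps$) that $c_\eps C/400 > 1 + \ln 3$ makes this at most $2n e^{-n}$, which tends to $0$. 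That is exactly the statement.

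The only point that requires any care is that the Chernoff decay rate, of order $p n^2$, must dominate the $n\ln 3$ cost of enumerating ordered $3$-partitions in the union bound; this is precisely where the hypothesis $p = \Omega(1/n)$ with a sufficiently large implied constant enters, and there is no real difficulty beyond keeping track of the constants. (Partitions that violate $|V_2|,|V_3| \ge n/20$ carry no claim, so they are simply omitted from the union.)
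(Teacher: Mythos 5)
Your proof is correct: $d_{2,3}(v)$ is a sum of $|V_2||V_3|$ independent indicators with mean $\mu \ge pn^2/400$, and the Chernoff exponent $c_\eps p n^2/400 > c_\eps C n/400$ dominates the $n\ln 3 + \ln n$ cost of the union bound over ordered partitions and vertices once $C$ is large enough. The paper does not reproduce a proof of this lemma (it cites it from Balogh--Butterfield--Hu--Lenz as a standard property of random hypergraphs), but the argument you give is precisely the standard one.
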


We also need the following concentration results.
\begin{lemma} \label{lem::lemma5p}
There exists a positive constant $C$ such that if $p > C\slnn / n$, then~w.h.p.~for every pair of vertices $x,y$, we have $d(x,y) \le pn \slnn / \lnlnn$.
\end{lemma}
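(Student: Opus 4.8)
The plan is a first-moment argument over pairs. Fix vertices $x,y$. Then $d(x,y) = \sum_{z \in [n]\sm\{x,y\}} \mathbf{1}[xyz \in G]$ is a sum of $n-2$ mutually independent indicators, so $d(x,y)$ is binomial with mean $\mu \ce (n-2)p$. Put $\lambda \ce \slnn/\lnlnn$; since $\lambda\mu \le pn\slnn/\lnlnn$, it is enough to show that w.h.p.\ $d(x,y) \le \lambda\mu$ for every pair $\{x,y\}$, which I will get by bounding $\P[d(x,y) > \lambda\mu]$ and summing over the fewer than $n^2$ pairs.

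For the tail I would apply Lemma~\ref{lem::che} with $\eps \ce \lambda - 1$ (the stated formula for $c_\eps$ is valid for $\eps$ depending on $n$), or equivalently use the standard Chernoff upper-tail estimate
\[
\P[d(x,y) > \lambda\mu] \;\le\; \exp\!\bigl(-\mu(\lambda\ln\lambda - \lambda + 1)\bigr).
\]
Write $h(\lambda) \ce \lambda\ln\lambda - \lambda + 1$; note that for $\eps = \lambda - 1$ one has $c_\eps = h(\lambda)$, since the competing term $\eps^2/2$ is far larger. The one computation to do is $\ln\lambda = \tfrac12\lnlnn - \log\log\log n = (\tfrac12 - o(1))\lnlnn$, whence $\lambda\ln\lambda = (\tfrac12 - o(1))\slnn$ and, as $\lambda = o(\slnn)$, also $h(\lambda) = (\tfrac12 - o(1))\slnn$. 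Since $p > C\slnn/n$ and $n-2 \ge n/2$, we have $\mu \ge (C/2)\slnn$ for large $n$, so $\mu\, h(\lambda) \ge (C/4 - o(1))\lnn$ and therefore $\P[d(x,y) > \lambda\mu] \le n^{-C/4 + o(1)}$.

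A union bound over all $\b{n}{2} < n^2$ pairs then gives
\[
\P\bigl[\exists\, \{x,y\}:\ d(x,y) > pn\slnn/\lnlnn\bigr] \;\le\; n^{2 - C/4 + o(1)} \;\longrightarrow\; 0,
\]
as long as $C$ is large enough (e.g.\ $C \ge 9$ comfortably), which completes the proof.

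I do not expect a genuine obstacle here; the only point requiring care is the choice $\lambda = \slnn/\lnlnn$ for the deviation factor. It is tuned so that, at the threshold density $p \asymp \slnn/n$, the Chernoff exponent $\mu\,h(\lambda)$ is of order exactly $\lnn$ — just barely enough to absorb the $\b{n}{2}$ union bound; a constant factor, or any factor that is $o(\slnn/\lnlnn)$, would leave an exponent that is only $o(\lnn)$, too small to conclude. For larger $p$ the mean $\mu$ grows while $\lambda$ and $h(\lambda)$ are unaffected, so the same bound applies a fortiori, which is why the hypothesis only bounds $p$ from below.
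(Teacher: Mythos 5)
Your proposal is correct, and it is essentially the same argument the paper gives: a pointwise tail bound on the binomial variable $d(x,y)$ followed by a union bound over the $\binom{n}{2}$ pairs. The only cosmetic difference is that the paper bounds the tail directly by $\binom{n}{k}p^k \le (enp/k)^k$ with $k = pn\slnn/\lnlnn$, whereas you use the Chernoff/Poisson form $\exp(-\mu(\lambda\ln\lambda-\lambda+1))$; after simplification both give an exponent of order $-np\slnn/2$, so the two are interchangeable here.
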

\begin{proof}
For every pair of vertices $xy$, the probability that $d(x,y) \ge \frac{pn\slnn}{\lnlnn}$ is at most $\b{n}{\frac{pn\slnn}{\lnlnn}}p^{\frac{pn\slnn}{\lnlnn}}$. There are $\b{n}{2}$ pairs of vertices, so by using a union bound, the probability that there exists a pair of vertices $xy$ such that $d(x,y) \ge \frac{pn\slnn}{\lnlnn}$ is at most
\begin{align*}
&\b{n}{2}\b{n}{pn\frac{\slnn}{\lnlnn}}p^{\frac{pn\slnn}{\lnlnn}} \le n^2 \left(\frac{enp}{pn\frac{\slnn}{\lnlnn}}\right)^{\frac{pn\slnn}{\lnlnn}}\le n^2 \left( \frac{e\lnlnn}{\slnn} \right)^{\frac{C\lnn}{\lnlnn}} \\
\le &\exp\left(2\lnn + \frac{C \ln n}{\lnlnn} \ln \frac{e\lnlnn}{\slnn}\right) = \exp\left( \left(2 + \frac{C\ln (e \lnlnn)}{\lnlnn}-\frac{C}{2}\right) \ln n \right),
\end{align*}
where the last expression is $o(1)$ for sufficiently large $C$.
\end{proof}

\begin{remark} \label{rem::maindiff}
	{\rm Lemma~\ref{lem::lemma5p} shows one of the differences when $p$ is only at least $C \slnn /n$, whereas in~\cite{balogh2016mantel} it is proved that w.h.p.~we have $d(x,y) \le 2pn$ for every pair of vertices $x,y$ when $p> K \lnn /n$. Note that with a direct application of Chernoff's inequality, one can only conclude that $d(x,y) \le pn \slnn$, without the $\lnlnn$ factor. As we will see in Section~\ref{sec::Pro}, this $\lnlnn$ factor plays a vital role in the proof of Theorem~\ref{thm::main} (see Remark \ref{rem::lnlnnintheproof})}.
\end{remark}

\begin{lemma} \label{lem::lemma5pn}
There exists a constant $C>0$ such that if $p > C\sqrt{\ln n}/n$, then~w.h.p.~for every subset $S \subseteq [n]$, we have
\begin{displaymath}
\left|\left\{xy \subseteq [n]\sm S\; : \; d_S(x,y) \ge 3pn\right\}\right| \le n^2e^{-\sqrt{\ln n}}.
\end{displaymath}
\end{lemma}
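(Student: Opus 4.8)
The plan is to fix a set $S\subseteq[n]$, bound the probability that $S$ has too many high--codegree pairs, and then defeat the $2^n$--term union bound over all possible $S$. Fix $S$, write $m\ce|S|$, and call a pair $\{x,y\}\in\binom{[n]\sm S}{2}$ \emph{$S$-bad} if $d_S(x,y)\ge 3pn$. Since $x,y\notin S$, the codegree $d_S(x,y)$ is distributed as $\mathrm{Bin}(m,p)$; the key structural observation is that the family $\bigl(d_S(x,y)\bigr)_{\{x,y\}\in\binom{[n]\sm S}{2}}$ is \emph{mutually independent}, because $d_S(x,y)$ is a function of the triples $\{xyz:z\in S\}$ and for distinct pairs $\{x,y\}\neq\{x',y'\}$ disjoint from $S$ these triple-sets are disjoint (a triple meeting $S$ in exactly one vertex is determined by its two vertices outside $S$). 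Because $m\le n$ the threshold $3pn$ is at least $3\,\E[d_S(x,y)]$, so a Chernoff bound (Lemma~\ref{lem::che} with $\eps=2$, or the crude estimate $\P[\mathrm{Bin}(m,p)\ge 3pn]\le\binom{m}{3pn}p^{3pn}\le(e/3)^{3pn}$) gives that any fixed pair is $S$-bad with probability at most $e^{-c_0 pn}$ for an absolute constant $c_0>0$ (one may take $c_0=3(\ln 3-1)$).

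Next I would set $K\ce n^2 e^{-\slnn}$ and let $B_S$ be the number of $S$-bad pairs, then bound $\P[B_S\ge K]$ by a union bound over the $\binom{\binom n2}{K}$ possible $K$-element families of bad pairs; by independence each such family occurs with probability at most $e^{-c_0 pnK}$, so
\begin{displaymath}
\P[B_S\ge K]\le\binom{\binom n2}{K}e^{-c_0 pn K}\le\left(\frac{en^2}{K}\,e^{-c_0 pn}\right)^{K}=\left(e^{\,1+\slnn-c_0 pn}\right)^{K},
\end{displaymath}
where the equality uses $n^2/K=e^{\slnn}$. This is exactly where the gain lies: $n^2/K=e^{\slnn}$ is negligible next to $e^{c_0 pn}$ once $C$ is large enough that $c_0 pn\ge c_0 C\slnn\ge 2+\slnn$, so the base above is at most $e^{-1}$ and hence $\P[B_S\ge K]\le e^{-K}$.

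Finally, since $e^{-\slnn}\ge n^{-1/2}$ for all large $n$, we have $K\ge n^{3/2}$, so a union bound over the at most $2^n$ subsets $S\subseteq[n]$ gives
\begin{displaymath}
\P\bigl[\exists\,S\subseteq[n]:\ B_S\ge K\bigr]\le 2^{n}e^{-K}\le 2^{n}e^{-n^{3/2}}=o(1),
\end{displaymath}
which is the assertion. The one genuine obstacle is this last step, the union bound over exponentially many sets $S$; it succeeds only because the error threshold $K$ is already polynomially large in $n$ (of degree $2-o(1)$) while the per-pair failure probability $e^{-c_0 pn}$ can be pushed below any prescribed rate by enlarging $C$, so the failure probability for a single $S$ is $e^{-\omega(n)}$ and comfortably absorbs the factor $2^n$. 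This is the only place where the hypothesis $p>C\slnn/n$ (rather than merely $p>C/n$) is used.
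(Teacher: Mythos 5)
Your proof is correct and follows essentially the same route as the paper's: fix $S$, exploit the mutual independence of the codegrees $d_S(x,y)$ over pairs disjoint from $S$, union-bound over $K$-element families of bad pairs (with $K = n^2e^{-\slnn}$) to get a per-$S$ failure probability of roughly $e^{-K}$, and then absorb the $2^n$ choices of $S$ because $K \ge n^{3/2} \gg n$. The only superficial differences are that you spell out why the codegrees are mutually independent and package the binomial tail bound as $e^{-c_0 pn}$ rather than carrying $\binom{|S|}{3pn}p^{3pn}$ through the computation; the underlying estimates are identical.
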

\begin{proof}
First, consider a fixed set $S$. For every pair of vertices $xy \subseteq [n]\sm S$, the probability that $d_S(x,y) > 3pn$ is at most $q = \b{|S|}{3pn} p^{3pn}$. For different pairs $x_1y_1, x_2y_2 \subseteq [n]\sm S$, random variables $d_S(x_1,y_1)$ and $d_S(x_2,y_2)$ are independent. Hence, given a family of $n^2 e^{- \slnn}$ pairs of vertices in $[n]\sm S$, the probability that $d_S(x,y) \ge 3pn$ for every pair in this family is at most $q^{n^2e^{-\slnn}}$. Then, by a union bound over all the families containing pairs of vertices not in $S$ with size $n^2 e^{-c \slnn}$, we get
$$
\P\left(\left|\left\{xy \subseteq [n]\sm S\; : \; d_S(x,y) \ge 3pn\right\}\right| > n^2e^{-\sqrt{\ln n}}\right)
\le \b{n^2}{n^2e^{-\slnn}}  q^{n^2e^{-\slnn}}.
$$
Finally, using a union bound over all the choices of $S$, the probability of failure is at most
\bdm
2^n\b{n^2}{n^2e^{-\slnn}} \left( \b{n}{3pn} p^{3pn}\right)^{n^2e^{-\slnn}} \le 2^n \left( e^{1+ \slnn} \left(\frac{e}{3}\right)^{3pn} \right)^{n^2e^{-\sqrt{\lnn}}} = o(1). \qedhere
\edm
\end{proof}

In our proof of Theorem~\ref{thm::main}, we will repeatedly use the following lemma to show that for some given vertex $v$ and vertex set $S$, the number of pairs that are in the link graph of $v$ and have a large neighborhood in $S$ is small.

Let $s = s(n,p)$, $r = r(n,p)$, $i = i(n,p)$ be positive integers depending on $n$ and $p$, where $s \le n$, $r \le \b{n}{2}$, and $i\le n$. Let $E_{s,r,i}$ be the event that for every vertex $v \in [n]$ and vertex set $S \subseteq [n]$, where $v \notin S$ and $|S| =s$, we have
$|\{yz \subseteq [n] \sm S: vyz \in G, d_S(y,z) \ge i\}| \le r$, 
see Figure \ref{fig::Esri}. Define $g(p,s,r,i) := n\b{n}{s} \b{n^2}{r} \left(p \b{s}{i}p^{i}\right)^{r}$, and let $O$ be the set of $(s,r,i)$ such that $g(p,s,r,i)= o(n^{-5})$ given $p > C\slnn/n$.
Let $E$ be the event $\bigcap\limits_{(s,r,i) \in O} E_{s,r,i}$.

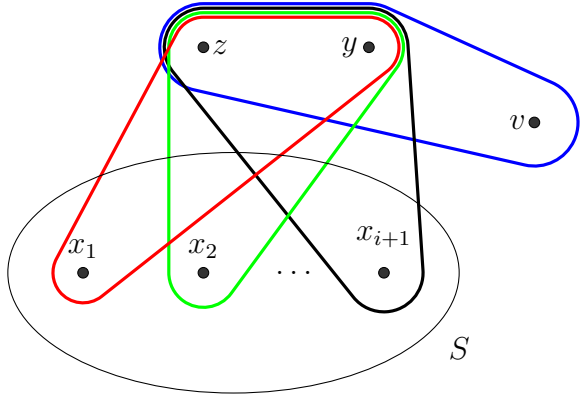
\begin{figure}[h!]
	\centering
	\caption{A pair of vertices $yz$ satisfies that $vyz \in G$ and $d_S(y,z) \ge i$. The event $E_{s,r,i}$ is that for every vertex $v$ and vertex set $S \subseteq [n] \sm \{v\}$ with size $s$, there are at most $r$ such pairs of vertices.}
	\begin{tikzpicture}[scale=2]
		\draw[color=black, opacity = .0, use as bounding box] (0,-0.8) rectangle (2,2);
		\node (v) at (2.5, 1) [label=left:$v$] {};  
		\node (x) at (1.5, 0) [label=above:$x_{i+1}$] {};
		\node (x2) at (0.3, 0) [label=above:$x_2$] {};
		\node (x3) at (-0.5, 0) [label=above:$x_{1}$] {};
		\node (y) at (1.4, 1.5) [label=left:$y$] {};
		\node (z) at (0.3, 1.5) [label=right:$z$] {};
		
		\node at (0.9, 0) [fill=black!0,draw=black!0] {$\ldots$};
		
		\node at (2, -0.5) [fill=black!0,draw=black!0] {$S$};
		\draw (0.5,0) ellipse (1.5 and 0.8) {};
		
		\begin{pgfonlayer}{bg}
			\draw[color=blue, very thick] \hedgeiii{z}{y}{v}{2.9mm};
			
			\draw[color=black, very thick] \hedgeiii{x}{z}{y}{2.6mm};
			
			\draw[color=green, very thick] \hedgeiii{x2}{z}{y}{2.3mm};
			
			\draw[color=red, very thick] \hedgeiii{x3}{z}{y}{2mm};
		\end{pgfonlayer}
	\end{tikzpicture}
	\label{fig::Esri}
\end{figure}

\begin{lemma} \label{lem::lemma12}
E happens with high probability.
\end{lemma}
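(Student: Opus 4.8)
The plan is a two-level union bound: first over the choices hidden inside a single event $E_{s,r,i}$, and then over the polynomially many triples $(s,r,i)\in O$.

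\emph{Fixing $(s,r,i)\in O$.} If $\overline{E_{s,r,i}}$ occurs, then there are a vertex $v$, a set $S\subseteq[n]\sm\{v\}$ with $|S|=s$, and $r$ distinct pairs $\{y_1,z_1\},\dots,\{y_r,z_r\}\in\b{[n]\sm S}{2}$ such that $vy_kz_k\in G$ and $d_S(y_k,z_k)\ge i$ for every $k$. I would bound $\Pr[\overline{E_{s,r,i}}]$ by summing, over the at most $n$ choices of $v$, the at most $\b{n}{s}$ choices of $S$, and the at most $\b{n^2}{r}$ choices of the $r$ pairs (they lie in $\b{[n]\sm S}{2}$, and $\b{\binom{n}{2}}{r}\le\b{n^2}{r}$), the probability that one fixed such configuration has the stated property.

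\emph{The probability of a fixed configuration} is the crux. Write $T_k=\{v,y_k,z_k\}$ and let $\mathcal T_k$ denote the $s$ triples $\{y_k,z_k,w\}$, $w\in S$. Because $v,y_k,z_k\notin S$, each $T_k$ contains no vertex of $S$ whereas every triple in every $\mathcal T_{k'}$ contains exactly one; hence no $T_k$ coincides with any triple in any $\mathcal T_{k'}$. The $T_k$ are pairwise distinct since the pairs $\{y_k,z_k\}$ are, and the $\mathcal T_k$ are pairwise disjoint since a triple meeting $S$ in exactly one vertex determines that vertex together with the complementary pair. Thus all triples in play are distinct, so the corresponding edge indicators of $G$ are mutually independent, and the probability of the configuration factors as
\begin{align*}
\Pr[\,\forall k:\ vy_kz_k\in G\,]\cdot\prod_{k=1}^{r}\Pr[\,d_S(y_k,z_k)\ge i\,]\ \le\ p^{\,r}\Bigl(\b{s}{i}\,p^{\,i}\Bigr)^{r},
\end{align*}
using $\Pr[d_S(y_k,z_k)\ge i]\le\b{s}{i}p^i$. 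Combining with the previous paragraph gives $\Pr[\overline{E_{s,r,i}}]\le n\b{n}{s}\b{n^2}{r}\bigl(p\b{s}{i}p^i\bigr)^{r}=g(p,s,r,i)=o(n^{-5})$ by the definition of $O$.

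\emph{Union over $O$.} Since $s\le n$, $r\le\b{n}{2}$ and $i\le n$, we have $|O|\le n^4$, so $\Pr[\overline E]\le\sum_{(s,r,i)\in O}\Pr[\overline{E_{s,r,i}}]\le n^4\cdot o(n^{-5})=o(1)$, which proves the lemma. The only genuinely delicate point is the independence used in the displayed inequality: one must verify that, simultaneously over all $k$, the triples feeding the two conditions ``$vy_kz_k\in G$'' and ``$d_S(y_k,z_k)\ge i$'' are all distinct, which is exactly where the hypotheses $v\notin S$ and $\{y_k,z_k\}\subseteq[n]\sm S$ enter. The rest is routine counting.
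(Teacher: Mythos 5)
Your proof is correct and takes essentially the same approach as the paper: a union bound over $v$, $S$, and the $r$ pairs, using independence between the event $vy_kz_k\in G$ and the codegree event in $S$, followed by a second union bound over $(s,r,i)\in O$. The one cosmetic difference is that the paper only states the independence for a single fixed pair and then silently multiplies the single-pair bound $r$ times inside the union bound, whereas you spell out why the relevant triples are all distinct, so the joint independence across all $k$ actually holds; that verification is implicit in the paper but you are right that it is where the hypotheses $v\notin S$ and $\{y_k,z_k\}\subseteq[n]\sm S$ do their work.
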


\begin{proof}
For distinct vertices $v,y,z \in [n]$ and vertex set $S \subset [n]$ where $v,y,z \notin S$ and $|S| = s$, we have $ \P(vyz \in G) = p$ and $\P(d_S(y,z) \ge i) \le \b{s}{i}p^{i}.$
Note that these two events are independent since $v \notin S$. Hence,
$$
\P(vyz \in G, d_S(y,z) \ge i) \le p\b{s}{i}p^{i}.
$$
For fixed $v$ and $S$, events $vyz \in G, d_S(y,z) \ge i$ are independent for different pairs $yz$, so 
$$
\P (|\{yz \subseteq [n] \sm S: vyz \in G, d_S(y,z) \ge i\}| > r)
\le \b{n^2}{r} \left(p \b{s}{i}p^i\right)^{r}.
$$
Then, using a union bound over all choices of $v \in [n]$ and $S \subset [n]$ of size $s$, we get that 
$$
\P(\bar{E}_{s,r,i}) \le n\b{n}{s}\b{n^2}{r} \left(p \b{s}{i}p^i\right)^{r}
= g(p,s,r,i),
$$ 
where $\bar{E}_{s,r,i}$ is the complement of the event $E_{s,r,i}$. Now we have
\bdm
\P(\bar{E}) = \P\Bigg(\bigcup_{(s,r,i) \in O} \bar{E}_{s,r,i}\Bigg) \le \sum_{(s,r,i) \in O} \P(\bar{E}_{s,r,i}) 
\le |O| \cdot o(n^{-5})
\le n\b{n}{2}n\cdot o(n^{-5}) = o(1). \qedhere
\edm
\end{proof}

\begin{remark}
	{\rm Lemmas~\ref{lem::lemma5pn} and~\ref{lem::lemma12} are similar, and one may hope that we can still keep the $e^{-\slnn}$ factor, by proving $(s, pn^2 e^{-\slnn}, 3ps)\in O$ for every $1\le s \le n$. Unfortunately, this is not necessarily true. In the proof of Lemma~\ref{lem::lemma5pn}, we have $n^2 e^{-\slnn}$ in the exponent, which can beat the number of choices of $S$, whereas here $pn^2 e^{-\slnn}$ is not necessarily larger than $n$. However, we have $(s, pn^2/\lnn, 3pn) \in O$, see Claim~\ref{cla::pn}.}
\end{remark}

Finally, we will use the following lemma to give lower bounds on the number of copies of $F_5$. 
For fixed vertex $v$, vertex set $A \subseteq [n] \sm \{v\}$, subset $T$ of $N_{[n]\sm A,[n]\sm A}(v)$, and subset $E$ of $\{vxw \in G\;:\: x \in A\}$ satisfying that for every $x \in A$, there exists $e \in E$ such that $x \in e$,
define
\bdm
K(v,E,A,T) \ce \{xyz: x\in A,\,yz \in T, \textrm{ there exists }e\in E \textrm{ such that } x\in e,\,y\notin e,\,z\notin e \},
\edm
and $G(v,E,A,T) \ce K(v,E,A,T) \cap G$. We have that $\E(|G(v,E,A,T)|) = p |K(v,E,A,T)|$; note that here the randomness in this expectation is of the hyperedges in $\{xyz\;:\: x\in A,\,yz \in T\}$ being or not in $G$. Also note that the events of the hyperedges in $\{xyz\;:\: x\in A,\,yz \in T\}$ being or not in $G$ are independent of the events of hyperedges containing $v$ being in $G$.
For every $xyz \in G(v,E,A,T)$ with $x \in A$, $yz \in T$, we can find a copy of $F_5 = \{yzv, yzx, vxw\}$ in $G$ where $vxw \in E$, see Figure~\ref{fig::GvEAT}. The condition $y\notin e,\,z\notin e$ in the definition of $K(v,E,A,T)$ guarantees that we indeed find an $F_5$ instead of a $K_4^{-}$.

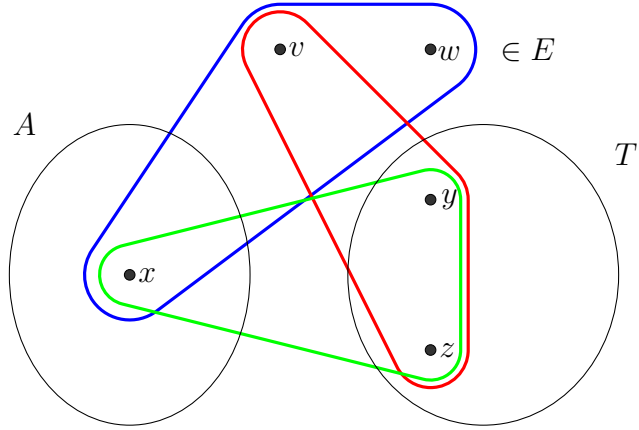
\begin{figure}[h!]
	\centering
	\caption{A copy of $F_5$ from $xyz \in G(v,E,A,T)$.} \label{fig::GvEAT}
	\begin{tikzpicture}[scale=2]
		\draw[color=black, opacity = .0, use as bounding box] (0,0) rectangle (2,3);
		\node (v) at (1, 2.5) [label=right:$v$] {};  
		\node (x) at (0, 1) [label=right:$x$] {};
		\node (y) at (2, 1.5) [label=right:$y$] {};
		\node (z) at (2, 0.5) [label=right:$z$] {};
		\node (w) at  (2, 2.5) [label=right:$w \quad \in E$] {};
		
		\node at (-0.7, 2) [fill=black!0,draw=black!0] {$A$};
		\draw (0,1) ellipse (0.8 and 1) {};
		
		\node at (3.3, 1.8) [fill=black!0,draw=black!0] {$T$};
		\draw (2.35,1) ellipse (0.9 and 1) {};
		
		\begin{pgfonlayer}{bg}
			\draw[color=blue, very thick] \hedgeiii{x}{v}{w}{3mm};
			
			\draw[color=red, very thick] \hedgeiii{v}{y}{z}{2.5mm};
			
			\draw[color=green, very thick] \hedgeiii{x}{y}{z}{2mm};
		\end{pgfonlayer}
	\end{tikzpicture}
\end{figure}

\begin{lemma} \label{lem::lemma9}
For any constants $\eps, \eps_1, \eps_2 >0$, there exists a constant $C>0$ such that if $p > C \slnn / n$, then w.h.p. for every choice of $v,E,A,T$ as above with $|A| \ge \eps_1 n/ \slnn$ and $|T| \ge \eps_2 pn^2$, we have $|G(v,E,A,T)| = (1 \pm \eps ) p |A||T|$.
\end{lemma}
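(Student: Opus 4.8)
The plan is to condition on the edges of $G$ incident to $v$ and observe that, once this is done, $|G(v,E,A,T)|$ is a sum of $|K(v,E,A,T)|$ independent $\mathrm{Bernoulli}(p)$ indicators, one per triple in $K(v,E,A,T)$ (all of which avoid $v$, hence are independent of the conditioning), with mean $p|K(v,E,A,T)|$. One then combines a Chernoff bound (Lemma~\ref{lem::che}) for a single admissible tuple $(v,E,A,T)$ with a union bound. Throughout we work on the w.h.p.\ event, guaranteed by Lemma~\ref{lem::vD} and Lemma~\ref{lem::lemma5p}, that $d(v)\le pn^2$ for every $v$ and $d(v,w)\le pn\slnn/\lnlnn\le pn\slnn$ for every pair $v,w$; these are facts about the edges at $v$, so they persist after the conditioning.

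The first step is the deterministic estimate $|K(v,E,A,T)|=(1\pm\eps/3)|A||T|$; granting it, $\E|G(v,E,A,T)|=p|K(v,E,A,T)|=(1\pm\eps/3)p|A||T|$. Each triple of $K(v,E,A,T)$ has its unique vertex in $A$ equal to some $x$, with the remaining two vertices forming a pair of $T$, so $|K(v,E,A,T)|\le|A||T|$ at once. For the lower bound, decompose $K(v,E,A,T)=\bigsqcup_{x\in A}K_x$, where, with $E_x\ce\{w:vxw\in E\}\subseteq N(v,x)$, a triple $xyz$ lies in $K_x$ exactly when $yz\in T$ and $E_x\not\subseteq\{y,z\}$. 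If $|E_x|\ge 3$ this holds for all of $T$; if $1\le|E_x|\le 2$ it can fail only for the pairs of $T$ meeting $E_x$, of which there are at most $\sum_{w\in E_x}d(v,w)\le 2pn\slnn$. Hence, assuming (as is the intended and the relevant case: $A$ is the set of $x$ appearing in some edge of $E$) that every vertex of $A$ lies in an edge of $E$, we get $|K(v,E,A,T)|\ge|A|(|T|-2pn\slnn)=(1-o(1))|A||T|$ since $|T|\ge\eps_2 pn^2\gg pn\slnn$.

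By Lemma~\ref{lem::che}, for a fixed admissible $(v,E,A,T)$ the probability that $|G(v,E,A,T)|\ne(1\pm\eps/3)p|K(v,E,A,T)|$ is at most $2e^{-c_{\eps/3}p|K(v,E,A,T)|}\le 2e^{-c_{\eps/3}(1-\eps/3)p|A||T|}$, and off this event $|G(v,E,A,T)|=(1\pm\eps)p|A||T|$. The main obstacle is now the union bound: $E$ can be any of up to $2^{\Theta(n^2)}$ subsets of the edges at $v$, far too many to handle directly. The resolution is to union bound instead over the distinct \emph{sets} $K(v,E,A,T)$: for fixed $v,A,T$, in the decomposition above each $K_x$ is one of $\es$, the full set $\{xyz:yz\in T\}$, that set with the (at most $d(v,a)$) pairs through a single $a\in N(v,x)$ deleted, or that set with one pair $\{xab\}$ ($a,b\in N(v,x)$) deleted; since $d(v,x)\le pn\slnn$ by Lemma~\ref{lem::lemma5p}, there are at most $(pn\slnn)^2\le p^2n^2\lnn$ possibilities for each $K_x$, hence at most $(p^2n^2\lnn)^{|A|}$ for $K(v,E,A,T)$ — a bound $e^{O(|A|\lnn)}$ rather than $e^{O(n^2)}$.

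Collecting everything, the probability that some admissible tuple is bad is at most
\bdm
2n\sum_{a\ge\eps_1 n/\slnn}\binom{n}{a}\sum_{\eps_2 pn^2\le t\le pn^2}\binom{pn^2}{t}\,(p^2n^2\lnn)^a\,e^{-c_{\eps/3}(1-\eps/3)pat}.
\edm
To see this is $o(1)$, split the exponent $c_{\eps/3}(1-\eps/3)pat$ into three equal parts and use $p>C\slnn/n$: two parts absorb $\binom{n}{a}$ and $(p^2n^2\lnn)^a$ (both $e^{O(a\lnn)}$, while $pt\ge\eps_2 C^2\lnn$ makes each part a large multiple of $a\lnn$), and the third absorbs $\binom{pn^2}{t}\le(epn^2/t)^t=e^{O(t)}$ (since $pa\ge\eps_1 C$ is a large constant and $\ln(epn^2/t)\le\ln(e/\eps_2)$). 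The double sum then collapses to at most $2n\sum_a e^{-2a\lnn}\sum_t e^{-t}\le 8n\,e^{-2\eps_1 n\slnn-\eps_2 pn^2}=o(1)$ provided $C$ is large in terms of $\eps,\eps_1,\eps_2$. The two things making this work are Lemma~\ref{lem::lemma5p} — which both keeps the defect $|A||T|-|K(v,E,A,T)|$ negligible and keeps the number of candidate $K_x$'s polylogarithmic in $n$ — and the fact that $pn\ge C\slnn$, which is what lets the single Chernoff exponent $c_{\eps/3}p|K(v,E,A,T)|$ pay simultaneously for all three families of choices.
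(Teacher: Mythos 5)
Your proof is correct and follows the same overall skeleton as the paper's: condition on the hyperedges at $v$, show deterministically that $|K(v,E,A,T)|=(1-o(1))|A||T|$, apply Chernoff, and union bound over the admissible tuples. The one place where you genuinely diverge from the paper — and this is the heart of the lemma — is how you control the union bound over the subsets $E$. The paper observes that, given $v$ and $A$ with $|A|=a$, Lemma~\ref{lem::lemma5p} gives at most $a\cdot pn\slnn/\lnlnn$ candidate hyperedges $vxw$ with $x\in A$, so $E$ has at most $2^{apn\slnn/\lnlnn}$ choices, and the surviving exponent $apn\slnn/\lnlnn$ must then be beaten by the Chernoff exponent $\Theta(atp)\ge\Theta(a\eps_2 p^2n^2)$; this only works because of the $\lnlnn$ improvement (the ratio $\slnn/(\eps_2 pn\lnlnn)\to 0$). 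You instead union bound over the possible sets $K(v,E,A,T)$ directly, noting that for each $x\in A$ the set $K_x$ depends only on $E_x$ up to its size: the full slab $[x,T]$ if $|E_x|\ge 3$, $[x,T]$ with the pairs through a single $a\in N(v,x)$ removed if $|E_x|=1$, or $[x,T]$ minus a single pair if $|E_x|=2$, giving at most $O\big((pn\slnn)^2\big)$ shapes per vertex and $\big(p^2n^2\lnn\big)^a=e^{O(a\lnn)}$ sets $K$ in total. This is a substantially tighter count — polynomial in $n$ per vertex of $A$ rather than exponential in $d(v,x)$ — and it decouples this lemma from the $\lnlnn$ refinement of Lemma~\ref{lem::lemma5p} (the plain bound $d(v,x)\le pn\slnn$ suffices for you). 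Your bookkeeping and the final computation are correct.

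One small but real point you caught: the paper's estimate $|[A,T]|-|K(v,E,A,T)|\le 2|A|pn\slnn/\lnlnn$ tacitly assumes $d^E(x)\ge 1$ for all $x\in A$; when $d^E(x)=0$ we have $K_x=\emptyset$ and the displayed inclusion $[x,T\sm T_x]\subseteq K$ fails. You make this hypothesis explicit, and it does hold in every application of the lemma in the paper (there $A=N^J(v)$ and $E\supseteq$ the witnesses placing each $w\in A$ in $J$), so nothing downstream is affected; but the lemma as literally stated is false without it, and you were right to flag it rather than silently use it.
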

The proof of Lemma~\ref{lem::lemma9} follows the same lines as the proof of Proposition 9 in~\cite{balogh2016mantel}.
\begin{proof}
For a fixed vertex $v$, we first reveal the hyperedges containing $v$. Then, fix a choice of $A$, $T$ and $E$.  
Let $[A,T] \ce \{xyz: x\in A,\,yz \in T\}$. For vertex $x\in A$, let $[x,T] \ce \{xyz : yz \in T\}$, $d^E(x) \ce |\{e \in E: x \in e\}|$, and $T_x \ce \{yz \in T : vxy \in E \textrm{ or } vxz \in E\}$. Note that $[A,T] \supseteq K(v,E,A,T)$ and $d^E(x) \ge 1$ by the definition of $E$. If $d^E(x) \ge 3$, choose $vxw_1, vxw_2,vxw_3$ arbitrarily from $E$. For every $yz \in T$, there exists some $i \in \{1,2,3\}$ such that $y\neq w_i$ and $z \neq w_i$. Hence, by the definition of $K(v,E,A,T)$, we have $[x,T] \subseteq K(v,E,A,T)$. If $d^E(x) \le 2$, then by Lemma~\ref{lem::lemma5p}, we have that w.h.p. $|T_x| \le 2 \cdot \frac{pn\slnn}{\lnlnn}$ and $[x, T \sm T_x] = \{xyz: yz \in T\sm T_x\} \subseteq K(v,E,A,T)$. Therefore,
\bdm
|[A,T]| - |K(v,E,A,T)| \le \sum_{x:\:x \in A,\,d^E(x) \le 2} |T_x| \le |A|\cdot 2 pn \frac{\slnn}{\lnlnn}.
\edm
We have $|[A,T]| = |A||T| \ge |A| \eps_2 pn^2$, so $|K(v,E,A,T)| = (1-o(1)) |A||T|$ with high probability.

Next, we reveal the hyperedges in $[A,T]$. Let $\mu = \E(G(v,E,A,T)) = p|K(v,E,A,T)| = (1-o(1))p|A||T|$. By Lemma~\ref{lem::che}, we have
\bdm
\P(||G(v,E,A,T)| - \mu| > \eps \mu) < 2e^{-c_\eps \mu}.
\edm

Now, we apply the union bound over all possible choices of $(v,E,A,T)$.
We have at most $n$ choices for $v$ and at most $\b{n}{a}$ choices for sets $A$ with size $a$. With high probability we have that for every $a,t >0$, given $|A|=a$ and $|T| = t$, there are at most $2^{apn \frac{\slnn}{\lnlnn}}$ choices for $E$ (by Lemma~\ref{lem::lemma5p}) and at most $\b{pn^2}{t}$ choices for $T$ (by Lemma~\ref{lem::vD}).
By the union bound, the probability that the statement in the lemma does not hold is at most

\begin{align*}
	&\sum_{a \ge \frac{\eps_1 n}{\slnn}} \sum_{t \ge \eps_2 pn^2} n \b{n}{a} 2^{apn \frac{\slnn}{\lnlnn}} \b{pn^2}{t}
	\cdot 2e^{-c_\eps \cdot  atp/2} + o(1)\\
	\le  & \sum_{a \ge \frac{\eps_1 n}{\slnn}} \sum_{t \ge \eps_2 pn^2} n \exp\left( a \ln \frac{en}{a} \right) 2^{apn \frac{\slnn}{\lnlnn}}  \exp\left(t \ln \frac{pn^2}{t}\right)\cdot 2e^{-c_\eps \cdot  atp/2} + o(1)\\
	\le  & 2 \sum_{a \ge \frac{\eps_1 n}{\slnn}} \sum_{t \ge \eps_2 pn^2}
	\exp\left(\lnn + a \ln\left(\frac{e}{\eps_1}\slnn\right) + apn \frac{\slnn \ln 2}{\lnlnn} + t\ln\frac{1}{\eps_2} -  \frac{c_\eps atp}{2}   \right) + o(1) 
\end{align*}\begin{align*}
	\le  & 2 \sum_{a \ge \frac{\eps_1 n}{\slnn}} \sum_{t \ge \eps_2 pn^2}
	\exp\left(-c_\eps atp/4  \right) + o(1)
	\le 2n^3 \exp(-n \slnn) + o(1)
	= o(1). \qedhere
\end{align*}
\end{proof}

\section{Proof of the Main Theorem} \label{sec::Pro}
We first give an outline of the proof.
Recall that for a hypergraph $H$ and a partition $\pi = (V_1,V_2,V_3)$ of $[n]$, we defined in Section~\ref{sec::Pre} that
\begin{align*}
    &\bar{H}_\pi = G_\pi \sm H_\pi, \\
    &H_i = \{e \in H: |e \cap V_i| \ge 2\} \;\; \text{ for $i\in \{1,2,3\}$, \;\;and } \\
    &Q(\pi) = \{uv \subset V_1: |L_{2,3}(u,v)|< 0.8p^2n^2/9\}.
\end{align*}
 
Assume that $H$ is a maximum $F_5$-free subhypergraph of $G^3(n,p)$.
Let $\pi = (V_1,V_2,V_3)$ be a partition of $[n]$ maximizing $|H_\pi|$.  
Let $e = x_1x_2v$ be a hyperedge in $H$ where $x_1,x_2 \in V_1$.
For every $yz \in L_{2,3}(x_1,x_2)$ where $y \neq v$ and $z \neq v$, at least one hyperedge from $\{yzx_1,yzx_2\}$ cannot be in $H$, since otherwise hyperedges $\{yzx_1,yzx_2,x_1x_2v\}$ form a copy of $F_5$. For those $x_1x_2 \notin Q(\pi)$, there are at least $0.8p^2n^2/9 - d(x_1,v)-d(x_2,v) = \Omega(\log n)$ such pairs $yz$. Hence, the existence of $e$ will cause $H$ to lose $\Omega(\log n )$ hyperedges. Since $H$ is maximum, one can expect that $H$ should not contain such hyperedges with more than one vertex in any part of $\pi$, so $H$ is tripartite. Proposition~\ref{prop::lemma13} confirms this idea. 
We also need to handle $Q(\pi)$.
A control over $|Q(\pi)|$ will be given by Proposition~\ref{prop::lemma14}, which states that if $Q(\pi)$ is large, then $t(G)$ will be much larger than $|G_\pi|$. Theorem~\ref{thm::main} will be a simple corollary of Propositions~\ref{prop::lemma13} and~\ref{prop::lemma14}.

For a $3$-uniform hypergraph $H$, the \emph{shadow graph} of $H$ is the graph on the same vertex set, where $xy$ is an edge if and only if there exists another vertex $z$ such that $xyz$ is a hyperedge in $H$.

\begin{proposition} \label{prop::lemma13}
Let $H$ be an $F_5$-free subhypergraph of $G$ and $\pi = (V_1,V_2,V_3)$ be a balanced partition maximizing $|H_\pi|$. Then there exist positive constants $C$ and $\delta$ such that if $p > C \sqrt{\ln n} /n$ and if the following conditions hold:
\begin{enumerate}
  \item  $|H_1|,|H_2|,|H_3| \le \delta p n^3/3$,
  \item the shadow graph of $H_1$ is disjoint from $Q(\pi)$,
\end{enumerate}
then with high probability $|\bar{H}_\pi| \ge 3|H_1|$, where equality is possible only if $H$ is tripartite.
\end{proposition}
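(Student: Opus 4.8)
\emph{Sketch of the plan.}
The plan is to condition on the high-probability conclusions of Proposition~\ref{prop::sta} and of Lemmas~\ref{lem::vD}, \ref{lem::degCro}, \ref{lem::lemma5p}, \ref{lem::lemma5pn}, \ref{lem::lemma12}, \ref{lem::lemma9} (the last for a fixed family of small constants $\eps,\eps_1,\eps_2$), together with the easy fact that w.h.p.\ every pair of vertices has positive codegree between any two of the parts; then one argues deterministically. For $v\in V_1$ put $B_v\ce\{\{a,b\}:a\in V_2,\,b\in V_3,\,vab\in G_\pi\sm H_\pi\}$, the missing crossing hyperedges ``rooted'' at $v$, so that $|\bar H_\pi|=\sum_{v\in V_1}|B_v|$ and, by Lemma~\ref{lem::degCro} and balancedness, $|B_v|=d_{2,3}(v)-d^H_{2,3}(v)=(1\pm\eps)pn^2/9-d^H_{2,3}(v)$. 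The target is $\sum_{v\in V_1}|B_v|\ge 3|H_1|$.

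The engine is an $F_5$-deficiency estimate. If $\{x,y\}\subseteq V_1$ lies in the shadow graph of $H_1$, say $xyw\in H$, then for each $\{a,b\}\in L_{2,3}(x,y)$ with $w\notin\{a,b\}$ the hyperedges $abx,\,aby,\,xyw$ form a copy of $F_5$ whenever $abx,aby\in H$ (the roles $(1,2,3,4,5)$ being $(a,b,x,y,w)$); as $H$ is $F_5$-free, at least one of $abx,aby$ is absent, i.e.\ $\{a,b\}\in B_x\cup B_y$. By hypothesis~(2), $\{x,y\}\notin Q(\pi)$, so $|L_{2,3}(x,y)|\ge 0.8\,p^2n^2/9$, while Lemma~\ref{lem::lemma5p} removes at most $d(x,w)\le pn\slnn/\lnlnn$ pairs; hence $|B_x|+|B_y|\ge(0.8-o(1))p^2n^2/9$. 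In particular every shadow edge inside $V_1$ has an endpoint $v$ with $|B_v|\ge 0.35\,p^2n^2/9$; call such $v$ \emph{rich}.

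It then suffices to account $3$ missing crossing hyperedges for each hyperedge of $H_1$. Write $A_v$ for the $V_1$-partners of $v$ in $H_1$ and $\eta(v)$ for the number of $H_1$-hyperedges having $v$ among their two $V_1$-vertices; by Lemma~\ref{lem::lemma5p}, $\eta(v)\le|A_v|\cdot pn\slnn/\lnlnn$, and hypothesis~(1) gives $|H_1\cap\b{V_1}{3}|\le\delta pn^3$. I would distinguish: (i) if $d^H_{2,3}(v)<\eps_2 pn^2$ then $|B_v|\ge pn^2/10$ is enormous, while maximality of $\pi$ (moving $v$ into $V_2$ or $V_3$ cannot raise $|H_\pi|$) yields $d^H_{1,2}(v)+d^H_{1,3}(v)\le 2d^H_{2,3}(v)$, so the $H_1$-degree at $v$ is $\ll|B_v|$; (ii) if $|A_v|\ge\eps_1 n/\slnn$ and $d^H_{2,3}(v)\ge\eps_2 pn^2$, take $E_v\ce\{vxw\in H:x\in A_v\}$ and let $T_v$ be $N^H_{2,3}(v)$ minus the $o(pn^2)$ pairs meeting $A_v$; each $xyz\in G(v,E_v,A_v,T_v)$ is a crossing triple of $G$ that cannot be in $H$ (else $yzv,\,yzx,\,vxw$, with $vxw\in E_v$, is an $F_5$ in $H$, cf.\ Figure~\ref{fig::GvEAT}), hence lies in $\bar H_\pi$ rooted at its vertex $x\in A_v$, so Lemma~\ref{lem::lemma9} gives $\sum_{x\in A_v}|B_x|\ge(1-\eps)p|A_v||T_v|$, which exceeds $3\eta(v)$ by a factor $\omega(1)$ because $p|T_v|=\Theta(p^2n^2)\gg pn\slnn/\lnlnn$; (iii) for the remaining rich $v$ (with $|A_v|<\eps_1 n/\slnn$) either $|A_v|$ is small enough (of order $pn\lnlnn/\slnn$) that already $3\eta(v)\le|B_v|$, or, summing the deficiency estimate over the shadow edges at $v$, $\sum_{y\in A_v}|B_y|\ge|A_v|\,(0.7\,p^2n^2/9-|B_v|)$ lets the excess demand at $v$ be routed to its partners. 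Since a missing hyperedge rooted at a partner is shared among several vertices, the per-vertex demands must be met simultaneously, i.e.\ one checks a Hall/flow condition $3\sum_{v\in W}\eta(v)\le\big|\bigcup_{v\in N[W]}\{v\}\times B_v\big|$ for all $W\subseteq V_1$ (here $N[W]$ adjoins the $H_1$-partners of $W$); this is where the $\lnlnn$-improvement of Lemma~\ref{lem::lemma5p}, Lemma~\ref{lem::lemma5pn}, and the case $(s,pn^2/\lnn,3pn)\in O$ of Lemma~\ref{lem::lemma12} are really used, and the \emph{intermediate} rich vertices---$|A_v|$ too large for the crude codegree bound yet too small for Lemma~\ref{lem::lemma9}, with $|B_v|$ itself sizable---are the main technical obstacle. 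Summing over $V_1$ and absorbing $O(\delta pn^3)$ from $H_1\cap\b{V_1}{3}$ then gives $\sum_{v\in V_1}|B_v|\ge 3|H_1|$.

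Finally, if $H_1\neq\es$ each estimate above has a slack tending to infinity (the ratio $0.8/3$, the $\gg$ in Lemma~\ref{lem::lemma9}, or the $\lnlnn$ factor), so in fact $|\bar H_\pi|>3|H_1|$; thus equality forces $H_1=\es$, whence $3|H_1|=0$ and $\bar H_\pi=\es$, i.e.\ $G_\pi\subseteq H$. If now $H$ contained a hyperedge $e$ with two vertices in a common part, then (by the positive-codegree fact) two suitable crossing hyperedges of $G_\pi$ would complete a copy of $F_5$ together with $e$, contradicting $F_5$-freeness of $H$; hence $H=G_\pi$ is tripartite.
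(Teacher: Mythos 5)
Your plan shares the central mechanisms of the paper's argument --- counting $F_5$-deficiencies along shadow edges outside $Q(\pi)$, invoking Lemma~\ref{lem::lemma9} for vertices with both $|A_v|$ and $d^H_{2,3}(v)$ large, and exploiting the $\lnlnn$ gain of Lemma~\ref{lem::lemma5p} --- but the per-vertex ``$B_v$'' accounting with its Hall/flow condition is not actually carried out, and you yourself flag the intermediate rich vertices as ``the main technical obstacle.'' That is exactly the step that is missing, and your sketch leaves it unresolved: a missing crossing hyperedge $wyz$ can be demanded by many shadow edges $wx$ at once, and nothing in the sketch bounds this multiplicity in the intermediate degree range.

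The paper sidesteps the flow condition by a different decomposition. Rather than asking each vertex (or its partners) to cover $3\eta(v)$ missing hyperedges, it partitions $H_1$ into $H_1(1),H_1(2),H_1(3)$ according to how its hyperedges meet $S$, $S_1$, $S_2$, $V_1\setminus S$, notes that one of the three is at least $|H_1|/3$, and bounds each piece against $|\bar H_\pi|$ via a \emph{global} $\hat F_5$ count rather than a per-vertex one. The tool that resolves your intermediate obstacle is Lemma~\ref{lem::lemma17}: for a shadow subgraph $J'$ of bounded maximum degree, it counts $\hat F_5$'s and, crucially, separates missing crossing hyperedges into ``good'' and ``bad'' according to $|N(y,z)\cap N^{J'}(x)|$, then uses the $(s,r,i)\in O$ instances of Lemma~\ref{lem::lemma12} (Claims~\ref{cla::171}, \ref{cla::172}, \ref{cla::pn}) to show bad hyperedges contribute negligibly, so that every $\hat F_5$ charging survives with a small, controlled multiplicity. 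Your sketch never performs this good/bad separation; without it, the multiplicity can be as large as $pn\slnn/\lnlnn$ for every missing hyperedge, which is exactly the loss you need to beat. The paper also splits the whole argument on whether $|\bar H_\pi|\le\delta pn^3/\lnn$ or not, changing the high-degree threshold from $\eps_1 n/\slnn$ to $\eps_1 n$ and the parameter $i$ in the good/bad cut from $pn\lnlnn/(500\slnn)$ to $3\eps_1 pn$; your single threshold $\eps_1 n/\slnn$ is correct only in the first regime. In short, the skeleton is the right one, but the step you identify as hard is genuinely absent, and the paper's route around it --- partitioning $H_1$ and the good/bad dichotomy in Lemma~\ref{lem::lemma17} --- is a distinct idea your plan would need.
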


\begin{proposition} \label{prop::lemma14}
There exist positive constants $C$ and $\delta$ such that if $p > C \slnndn$ and the $3$-partition $\pi$ is balanced, then with high probability
\bdm
t(G) \ge |G_\pi| + |Q(\pi)| \delta n^2 p^2,
\edm
where equality is possible only if $Q(\pi) = \es$.
\end{proposition}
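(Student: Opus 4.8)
\emph{Proof plan.} A maximum tripartite subhypergraph of $G$ is $G_{\pi'}$ for the $3$-partition $\pi'$ maximizing $|G_{\pi'}|$ (no balance is needed on $\pi'$), so $t(G)=\max_{\pi'}|G_{\pi'}|$ and it suffices to exhibit one $\pi'$ with $|G_{\pi'}|\ge|G_\pi|+\delta n^2p^2|Q(\pi)|$, strictly so when $Q(\pi)\neq\es$. I would take $\pi'$ to be a perturbation of $\pi$: choose disjoint $W_2,W_3\subseteq V_1$, put $W=W_2\cup W_3$, and move $W_2$ into $V_2$ and $W_3$ into $V_3$. A short case analysis of which hyperedges change crossing status gives the exact identity
\begin{align*}
|G_{\pi'}|-|G_\pi|={}&\sum_{w\in W_2}\bigl(d_{V_1\sm W,V_3}(w)-d_{2,3}(w)\bigr)+\sum_{w\in W_3}\bigl(d_{V_1\sm W,V_2}(w)-d_{2,3}(w)\bigr)\\
&{}+\sum_{\substack{w\in W_2,\ w'\in W_3}}d_{V_1\sm W}(w,w').
\end{align*}
The last sum is nonnegative. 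By Lemma~\ref{lem::vD}, Lemma~\ref{lem::degCro}, and balancedness of $\pi$, whp every link‑degree above is $(1\pm\eps)$ times its expectation, so each summand of the first two sums is small for a fixed $w$, but its sign is not controlled; hence $W$ must be chosen with care.

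\emph{Using the bad pairs.} The heart of the argument is to select $W$ from the edge set of $Q(\pi)$. For $\{x,y\}\in Q(\pi)$, combining $|L_{2,3}(x,y)|<0.8p^2n^2/9$ with $d_{2,3}(x),d_{2,3}(y)\ge 0.9\,pn^2/9$ shows that the links $N_{2,3}(x),N_{2,3}(y)$ are almost disjoint; this near‑disjointness is the structural dividend of a bad pair. The natural route is to distribute the endpoints of a large sub‑collection of $Q(\pi)$ (a near‑matching, or a near‑optimal cut of $Q(\pi)$) between $W_2$ and $W_3$ so that each moved vertex goes to the side it ``prefers'' — making every boundary summand nonnegative — while, for each selected bad pair $\{x,y\}$, either its interaction contribution $d_{V_1\sm W}(x,y)$ or the near‑disjointness of $N_{2,3}(x),N_{2,3}(y)$ forces an extra gain of order $n^2p^2$; summing over the selected pairs yields the claimed $\delta n^2p^2|Q(\pi)|$. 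One consistency check the accounting must pass is compatibility with $t(G)\le|G|$, which already forces $|Q(\pi)|=O(n/p)$, so the perturbations stay of manageable size. The delicate point is preventing the uncontrolled boundary summands from eroding the gains — this is the technical core of the proof.

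\emph{The main obstacle} is uniformity over partitions: the conclusion must hold whp for \emph{all} balanced $\pi$ simultaneously, i.e.\ for $\approx 3^n$ partitions, whereas the natural Chernoff estimates (for $|L_{2,3}(x,y)|$ and for the degrees above) fail only with probability $e^{-\Theta(p^2n^2)}=n^{-\Theta(1)}$ when $p=\Theta(\slnn/n)$, far too weak to survive a union bound over partitions. This is exactly the ``beat the union bound'' difficulty flagged in the introduction. The way around it is to avoid estimating $|L_{2,3}(x,y)|$ per partition at all: instead one invokes the union‑bound‑resilient events already prepared — Lemma~\ref{lem::lemma12} and Lemma~\ref{lem::lemma5pn}, which bound uniformly over all sets $S$ (and, in the first case, all vertices $v$) how many pairs in a link have large codegree into $S$ — together with the sharpened codegree bound $d(x,y)\le pn\slnn/\lnlnn$ of Lemma~\ref{lem::lemma5p}, the $\lnlnn$‑improvement being precisely what keeps the estimates tight enough. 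From these one can read off, for each fixed balanced $\pi$, the consequences of $\{x,y\}\in Q(\pi)$ needed in the perturbation argument without ever paying a $3^n$ factor, and the argument of the first two paragraphs then goes through. The strict‑inequality clause is immediate: whenever $Q(\pi)\neq\es$ one takes $W\neq\es$, and the gain is then strictly positive.
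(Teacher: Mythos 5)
The paper gives no proof of this proposition — it states that the proof of the corresponding statement in \cite{balogh2016mantel} (proved there for $p > K\lnn/n$) goes through verbatim — so there is no in-paper argument to compare against, and I assess your sketch on its own merits. Your set-up is sound: the displayed identity for $|G_{\pi'}|-|G_\pi|$ under moving $W_2\to V_2$, $W_3\to V_3$ is correct (I checked the case analysis of which triples change crossing status), and the observation that one cannot afford a Chernoff bound on $|L_{2,3}(x,y)|$ per partition is exactly why the statement is phrased via $Q(\pi)$ instead of asserting $Q(\pi)=\es$. (On the other hand, the per-vertex degree estimates you also worry about are in fact safe uniformly over $\pi$: $d_{2,3}(v)$ concentrates with failure probability $e^{-\Theta(pn^2)}\ll 3^{-n}$ already for $p>C/n$, which is the content of Lemma~\ref{lem::degCro}; it is codegree-type quantities and $|L_{2,3}|$ that are the genuine union-bound problem.)

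The gap is in the quantitative core, which you yourself flag as ``the technical core of the proof'' without supplying it. Concretely: (i) the interaction term $d_{V_1\sm W}(x,y)$ is a codegree, of order $pn/3\asymp\slnn$, whereas the required per-pair gain is $\delta n^2p^2\asymp\lnn$ — off by a factor $np\asymp\slnn$ — so summing interaction terms matches $\delta n^2p^2|Q(\pi)|$ only when $|Q(\pi)|\gtrsim np$, a regime you cannot assume; (ii) the ``near-disjointness'' you invoke is not special to bad pairs: for \emph{every} pair $\{x,y\}\subset V_1$ the overlap $|L_{2,3}(x,y)|$ is a $\Theta(p)$-fraction of $d_{2,3}(x)$, so what characterizes $\{x,y\}\in Q(\pi)$ is merely a constant-factor deficit below the expected overlap $\approx p^2n^2/9$, and your sketch gives no mechanism converting that $20\%$ deficit into $\delta n^2p^2$ additional crossing triples for some $\pi'$; (iii) the assertion that one can route each moved vertex to a ``preferred'' side making its boundary summand nonnegative is not argued — for balanced $\pi$ both candidate boundary differences are essentially zero in expectation, so one must control their fluctuations, and that is again the union-bound difficulty. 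In short, you have correctly identified the perturbation framework and the obstacle, but the step that actually produces $\delta n^2p^2$ per bad pair — which is the entire content of the proposition — is missing.
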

Based on Propositions~\ref{prop::lemma13} and~\ref{prop::lemma14}, Theorem~\ref{thm::main} easily follows, whose proof is similar to the proof of the main theorem in~\cite{balogh2016mantel}.
\begin{proof}[Proof of Theorem~\ref{thm::main}]
Let $H$ be a maximum $F_5$-free subhypergraph of $G$. We have $|H| \ge t(G)$, since $F_5$ is not tripartite. Let $\pi = (V_1,V_2,V_3)$ be a $3$-partition of $[n]$ maximizing $|H_\pi|$. Without loss of generality, we may assume $|H_1| \ge |H_2|,|H_3|$. Besides, by Proposition~\ref{prop::sta}, we know that w.h.p.~$\pi$ is balanced and $\sum_{i=1}^3 |H_i| \le \delta p n^3/3$, where $\delta$ is a constant smaller than the $\delta$'s in Propositions~\ref{prop::lemma13} and~\ref{prop::lemma14}. Now let 
$$
B(\pi) \ce \{e \in G : \textrm{there exists } uv \in Q(\pi) \textrm{ such that } uv \subset e\}
$$ and $H' \ce H \sm B(\pi)$, so the shadow graph of $H'_1$ is disjoint from $Q(\pi)$. Since $B(\pi)$ consists of only non-crossing hyperedges of $H$, $\pi$ is still a partition maximizing $|H_\pi'|$ and $|H'_1|,|H'_2|,|H'_3| \le \sum_{i=1}^3 |H'_i| \le \sum_{i=1}^3 |H_i| \le \delta p n^3/3$. Hence, $H'$ satisfies the assumptions in Proposition~\ref{prop::lemma13}. Now, we have w.h.p.
\begin{align}
|H|  \le |H_\pi| + 3|H_1| &= |H'_\pi|+ 3|H'_1| + 3|H \cap B(\pi)| \notag \\
    & \le |H'_\pi|+ 3|H'_1| + 3|B(\pi)| \notag \\
    & \le |G_\pi| + 3|B(\pi)| \label{step1} \\
    & \le |G_\pi| + 3|Q(\pi)|pn\slnn/ \lnlnn \label{step2} \\
    & \le |G_\pi| + |Q(\pi)|\delta p^2 n^2 \label{step3} \\
    & \le t(G) \label{step4}.
\end{align}
Here we use Proposition~\ref{prop::lemma13} for (\ref{step1}), Lemma~\ref{lem::lemma5p} for (\ref{step2}), the assumption $p > C \slnn /n$ for (\ref{step3}), and Proposition~\ref{prop::lemma14} for (\ref{step4}).
$|H|$ cannot be strictly smaller than $t(G)$, so all the inequalities must hold with equality. By Propositions~\ref{prop::lemma13} and~\ref{prop::lemma14}, $H'$ is tripartite and $Q(\pi)$ is empty. Then, we conclude that $H = H'$ is tripartite.
\end{proof}

The proof of Proposition~\ref{prop::lemma14} is exactly the same as the one in~\cite{balogh2016mantel}, where it is assumed that $p > K \ln n /n $. It can be easily checked that all the arguments still work verbatim with $p > C \slnndn$, and hence we do not include its proof here. It remains to prove Proposition~\ref{prop::lemma13}. 

Clearly, we can assume that $|H_1| \ge |H_2|, |H_3|$.  
Let $\delta$ be small enough so that the following arguments work, and fix three small positive constants $\eps_1$, $\eps_2$, and $\eps_3$ such that
\bdm
\frac{1}{72\eps_1} \ge 30,\quad \frac{1}{20} \cdot \frac{1}{2\eps_2} \ge 10, \quad \frac{1}{10} - \eps_2 \ge \frac{1}{20}, \quad \frac{100\delta}{\eps_1} \le \eps_3 \le \eps_1, \quad \frac{\frac{1}{4}\eps_1\eps_2}{3\eps_3} \ge 20.
\edm
For example, we can set
\bdm
\delta = 10^{-100},\quad \eps_1 = \frac{1}{3000},\quad \eps_2 = \frac{1}{400},\quad \eps_3 = 10^{-10}.
\edm
Denote by $J$ the induced subgraph of the shadow graph of $H_1$ on the vertex set $V_1$ and use $N^J(x)$, $d^J(x)$ for neighborhood and degree of $x$ in graph $J$, respectively. Call a $4$-set $\{w_1,w_2,y,z\}$ an $\hff$ if $w_1yz$, $w_2yz\in G_\pi$ and there exists $e \in H$ such that $w_1,w_2 \in e \cap V_1$, $y,z\notin e$. Note that $w_1w_2 \in J$ and $\{w_1yz,w_2yz,e\}$ forms a copy of $F_5$ in $G$, so at least one of $w_1yz$ and $w_2yz$ has to be in $\bar{H}_\pi$.

\begin{figure}[h!]
	\centering
	\caption{The set $\{w_1, w_2, y, z\}$ is an $\hff$ if there exists an edge $e \in H_1$ as below.}
	\begin{tikzpicture}[scale=2]
		\draw[color=black, opacity = .0, use as bounding box] (0,-0.3) rectangle (2,2.8);
		\node (a) at (0, 1.5)[label=right:$w_1$] {};  
		\node (b) at (0, 1)[label=right:$w_2$] {};
		\node (c) at (0, 0.5)[label=below:$\qquad e\in H_1$] {};
		\node (d) at (2, 2)[label=below:$y$] {};
		\node (e) at  (2, 0.5)[label=above:$z$] {};
		
		\draw (2.5,2) ellipse (0.8 and 0.7) {};
		\draw (-0.3,1.3) ellipse (0.8 and 0.7) {};
		\draw (2.5,0.5) ellipse (0.8 and 0.7) {};
		
		\node at (-0.8, 1.2) [fill=black!0,draw=black!0] {$V_1$};
		\node at (2.8, 2.2) [fill=black!0,draw=black!0] {$V_2$};
		\node at (2.8, 0.2) [fill=black!0,draw=black!0] {$V_3$};
		
		\begin{pgfonlayer}{bg}
			\draw[color=blue, very thick] \hedgeiii{a}{b}{c}{3mm};
			
			\draw[color=red, very thick] \hedgeiii{a}{d}{e}{2.5mm};
			
			\draw[color=green, very thick] \hedgeiii{b}{d}{e}{2mm};
		\end{pgfonlayer}
	\end{tikzpicture}
\end{figure}
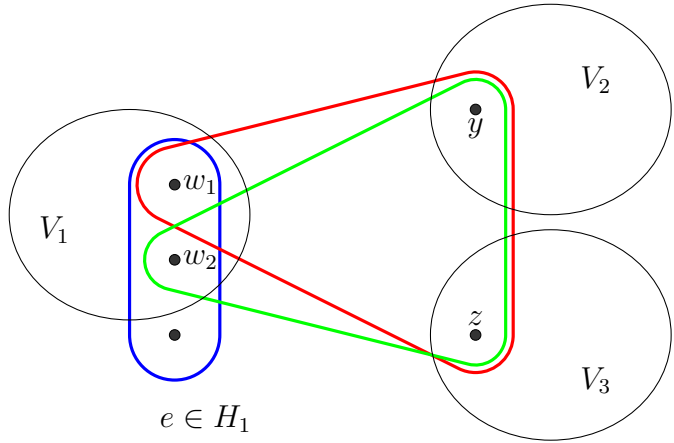

We next count copies of $\hff$ to lower bound the number of missing crossing hyperedges $|\bar{H}_\pi|$, based on the size of (a subgraph of) $J$. First, we prove the following claim, which will be used in the proofs of the next few lemmas.

\begin{claim} \label{cla::over12}
If $p > C\slnn / n$, then~w.h.p.~for every edge $x_1x_2 \in J \setminus Q(\pi)$, there are at least $p^2n^2 /12$ choices of $(y,z)$ where $y \in V_2, z\in V_3$ such that $\{x_1,x_2,y,z\}$ spans an $\hff$.
\end{claim}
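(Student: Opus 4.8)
The plan is to read the lower bound straight off the definition of $J\setminus Q(\pi)$ and then subtract the negligibly many pairs that clash with one fixed witnessing hyperedge.

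First I would fix an edge $x_1x_2\in J\setminus Q(\pi)$. Since $x_1x_2$ is an edge of the shadow graph of $H_1$ with both endpoints in $V_1$, there is a vertex $w$ with $e\ce x_1x_2w\in H$ (and then automatically $e\in H_1$, since $|e\cap V_1|\ge 2$). From $x_1x_2\notin Q(\pi)$ we get, directly by the definition of $Q(\pi)$, that $|L_{2,3}(x_1,x_2)|\ge 0.8p^2n^2/9$; that is, there are at least $0.8p^2n^2/9$ pairs $(y,z)$ with $y\in V_2$, $z\in V_3$ and $x_1yz,\,x_2yz\in G$, and each such pair in fact has $x_1yz,\,x_2yz\in G_\pi$ because $x_1\in V_1$, $y\in V_2$, $z\in V_3$.

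Next I would observe that, for every such pair with $y\ne w$ and $z\ne w$, the $4$-set $\{x_1,x_2,y,z\}$ is an $\hff$ witnessed by $e$: indeed $x_1,x_2\in e\cap V_1$, and $y,z\notin e$ because $y,z\in V_2\cup V_3$ and $y,z\ne w$. So it only remains to bound the number of pairs in $L_{2,3}(x_1,x_2)$ that contain $w$. If $w\in V_1$ there are none; if $w\in V_2$ every such pair is of the form $(w,z)$ with $z\in N_3(x_1,w)$, so there are at most $d(x_1,w)$ of them, and symmetrically if $w\in V_3$. By Lemma~\ref{lem::lemma5p}, with high probability $d(x_1,w)\le pn\slnn/\lnlnn$ for all pairs simultaneously, and since $p>C\slnn/n$ this is at most $p^2n^2/(C\lnlnn)=o(p^2n^2)$.

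Combining the two estimates, with high probability the number of pairs $(y,z)$, $y\in V_2$, $z\in V_3$, for which $\{x_1,x_2,y,z\}$ spans an $\hff$ is at least $0.8p^2n^2/9-pn\slnn/\lnlnn\ge p^2n^2/12$ for $C$ (equivalently $n$) sufficiently large, which is exactly the claim. The only probabilistic ingredient is Lemma~\ref{lem::lemma5p}, so there is no union-bound cost and no genuine obstacle here: this is the routine structural claim on which the more delicate counting lemmas that follow are built. The only point that needs a little care is keeping track of which part contains the extra vertex $w$, but the codegree bound of Lemma~\ref{lem::lemma5p} handles every case uniformly.
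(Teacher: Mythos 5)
Your proof is correct and follows essentially the same route as the paper: both read the lower bound $0.8p^2n^2/9$ directly from $x_1x_2\notin Q(\pi)$, fix one witnessing hyperedge $x_1x_2w\in H_1$, and then use Lemma~\ref{lem::lemma5p} to bound the codegree $d(x_1,w)\le pn\slnn/\lnlnn$ so that the pairs colliding with $w$ are negligible. Your explicit case analysis on which part contains $w$ is slightly more careful than the paper's phrasing but leads to the identical estimate $0.8p^2n^2/9 - pn\slnn/\lnlnn \ge p^2n^2/12$.
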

\begin{proof}
Since $x_1x_2$ is in $J$, there exists $v_0$ such that $x_1x_2v_0\in H$. Since $x_1x_2$ is not in~$Q(\pi)$, there exist at least $0.8 p^2n^2/9$ choices of $(y,z)$ such that $y\in V_2$, $z \in V_3$, and $x_1yz$ and $x_2yz$ are both in $G_\pi$. By Lemma~\ref{lem::lemma5p}, we know that w.h.p. $d(x_1,v_0)$ is at most $pn\frac{\sqrt{\ln n}}{\lnlnn}$, so there can be at most $pn\frac{\sqrt{\ln n}}{\lnlnn}$ vertices $v$ such that $x_1v_0v \in G_\pi$. Therefore, there are at least $0.8p^2n^2/9 - pn \frac{\sqrt{\ln n}}{\lnlnn} \ge p^2n^2/12$ choices of $(y,z)$ such that $x_1yz,x_2yz \in G_\pi$ and $v_0\not\in \{y,z\}$. For every such $(y,z)$, $\{x_1,x_2,y,z\}$ forms an $\hff$.
\end{proof}

\begin{lemma} \label{lem::lemma17}
Suppose the assumptions of Proposition~\ref{prop::lemma13} hold. Let $J'$ be a subgraph of $J$ and denote by $\Delta(J')$ the maximum degree of $J'$. If $\Delta(J') \le \eps_1 n$, then~w.h.p.~we have
\bdm
|\bar{H}_\pi| \ge 30 pn |J'|.
\edm
If further $\Delta(J') \le \eps_1 n / \slnn$, then~w.h.p.~we have
\bdm
|\bar{H}_\pi| \ge 20pn |J'| \frac{\slnn}{\lnlnn}.
\edm
\end{lemma}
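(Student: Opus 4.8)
I would prove both inequalities together, treating $J'$ in two degree regimes, and the engine in both regimes is the copies of $\hff$ guaranteed by Claim~\ref{cla::over12}. Since $J'\subseteq J$ and $J$ is disjoint from $Q(\pi)$ by hypothesis~(2), Claim~\ref{cla::over12} applies to every edge of $J'$: each $x_1x_2\in J'$ lies in at least $p^2n^2/12$ copies of $\hff$ with $y\in V_2$, $z\in V_3$, and by $F_5$-freeness of $H$ each such copy forces one of $x_1yz,x_2yz$ into $\bar H_\pi$. Writing $m_v$ for the number of missing crossing hyperedges through $v\in V_1$, and observing that a fixed $wyz\in\bar H_\pi$ is charged in this way by at most $d^G_{N^{J'}(w)}(y,z)$ of these copies (the other endpoint of the edge has to be a $J'$-neighbour of $w$ that also completes a hyperedge of $G$ with $yz$), summing over the edges of $J'$ gives the master inequality
\begin{displaymath}
|J'|\,\frac{p^2n^2}{12}\;\le\;\sum_{wyz\in\bar H_\pi} d^G_{N^{J'}(w)}(y,z).
\end{displaymath}
The point of everything that follows is that $\sum_w m_v\,d^{J'}(v)$ is what sits on the right once the codegrees are controlled.

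\textbf{The sparse regime} $\Delta(J')\le\eps_1 n/\slnn$: here I would bound the right-hand side by splitting $\bar H_\pi$ according to whether $d^G_{N^{J'}(w)}(y,z)<3pn$ or $\ge 3pn$. The first part contributes at most $3pn\,|\bar H_\pi|$. For the second part, Lemma~\ref{lem::lemma12}, applied with $S=N^{J'}(w)$ (padded to an admissible size $s$) and an admissible triple $(s,r,3pn)\in O$ in the spirit of Claim~\ref{cla::pn}, caps the number of such pairs per vertex $w$, and each contributes at most $pn\slnn/\lnlnn$ by the sharpened codegree bound Lemma~\ref{lem::lemma5p}; Lemma~\ref{lem::lemma5pn} can also be used here to limit globally the pairs of large codegree. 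Feeding in the chosen values of $\eps_1,\eps_2,\eps_3$ and rearranging yields $|\bar H_\pi|\ge 20pn|J'|\slnn/\lnlnn$; it is precisely the $\lnlnn$-improvement in Lemma~\ref{lem::lemma5p} over the crude $d(x,y)\le pn\slnn$ that makes this denominator, and hence the $\slnn/\lnlnn$ factor, appear.

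\textbf{The dense regime} $\Delta(J')\le\eps_1 n$: split $J'=J'_{\mathrm{hi}}\cup J'_{\mathrm{lo}}$, where $J'_{\mathrm{hi}}$ consists of the edges incident to a vertex of $J'$-degree at least $\eps_1 n/\slnn$. The subgraph of $J'$ induced on the low-degree vertices has maximum degree at most $\eps_1 n/\slnn$, so the sparse regime gives $|\bar H_\pi|\ge 20pn\slnn/\lnlnn$ times the number of its edges, which exceeds $30pn$ times that number for large $n$. For a high-degree vertex $v$ I would invoke Lemma~\ref{lem::lemma9} with $A=N^{J'}(v)$ (of size $\ge\eps_1 n/\slnn$), with $E$ one $H_1$-witness hyperedge $vxw$ for each $x\in A$, and with $T=\{yz: y\in V_2,\ z\in V_3,\ vyz\in H_\pi\}$: if $|T|<\eps_2 pn^2$ then $m_v=d_{2,3}(v)-|T|$ is already enormous and we are done for that vertex, and otherwise every $xyz\in G(v,E,A,T)$ is a missing crossing hyperedge with $V_1$-vertex in $A$ (the copy $\{vyz,xyz,vxw\}$ of $F_5$ has $vxw,vyz\in H$, so $xyz\notin H$), giving $\sum_{x\in N^{J'}(v)}m_x\ge(1-\eps)p\,d^{J'}(v)\,|T|\gtrsim p^2n^2\,d^{J'}(v)$. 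Summing over the high-degree vertices and bounding the left side by noting that a given missing crossing hyperedge is produced in this way at most $d^G_{V_1}(y,z)$ times --- again controlled through Lemmas~\ref{lem::lemma5p} and~\ref{lem::lemma12} --- produces $|\bar H_\pi|\ge 30pn|J'_{\mathrm{hi}}|$, and adding the bound on $J'_{\mathrm{lo}}$ completes this case, using $\tfrac1{72\eps_1}\ge 30$.

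\textbf{The main obstacle} is the constant budget. The master inequality is short by roughly a factor $1/p$ of what one needs, and closing the gap requires that, for the relevant set sizes $s$, the admissible triples $(s,r,i)\in O$ be available with $i$ as small as $\Theta(pn)$ while $r$ stays small enough (roughly $n/\slnn$) that the ``heavy-pair'' error stays below $|J'|p^2n^2$. This is a genuinely tight trade-off: it is why the conclusion cannot be pushed below $\Theta(pn|J'|)$, why the stronger $\slnn/\lnlnn$ bound is only available once $\Delta(J')$ is a $\slnn$-factor smaller, and why in the dense regime the high-degree part has to be routed through Lemma~\ref{lem::lemma9} instead of through the $\hff$-count alone. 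Verifying that the chosen $\eps_1,\eps_2,\eps_3$ (and the admissible triples) make every error term in the two regimes negligible against the main term is the real work of the proof.
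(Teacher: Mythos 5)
Your master inequality and the strategy of charging $\hff$-copies to missing crossing hyperedges are exactly the engine the paper uses, and the computation that a fixed $wyz\in\bar H_\pi$ is charged at most $|N(y,z)\cap N^{J'}(w)|$ times is correct. The paper also handles both degree ranges in a single argument by splitting the missing crossing hyperedges through each $x\in V_1$ into ``good'' and ``bad'' according to the size of $N(y,z)\cap N^{J'}(x)$, with a threshold that depends on $d^{J'}(x)$ --- it does not route the dense case through Lemma~\ref{lem::lemma9} at all.

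The genuine gap is in your sparse regime. You split at the threshold $d^G_{N^{J'}(w)}(y,z)<3pn$ and let the ``light'' pairs contribute at most $3pn|\bar H_\pi|$. With that main term the master inequality can only produce
\begin{displaymath}
|\bar H_\pi|\ \ge\ \frac{|J'|p^2n^2/12 - \text{(error)}}{3pn}\ \approx\ \frac{pn|J'|}{36},
\end{displaymath}
which is weaker than the target $20pn|J'|\slnn/\lnlnn$ by a factor of order $\slnn/\lnlnn$; no amount of improvement to the error term can repair this, because the bottleneck is the $3pn$ sitting in the denominator. The observation you are missing is that $|N^{J'}(w)|\le\eps_1 n/\slnn$ already forces the typical codegree $d^G_{N^{J'}(w)}(y,z)$ to be of order $p\cdot n/\slnn$, an extra $\slnn$ below $pn$, so the good/light threshold should be taken at $\Theta\bigl(pn\lnlnn/\slnn\bigr)$ rather than $\Theta(pn)$. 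This is exactly what the paper does: in the sparse case it calls $xyz$ bad when $|N(y,z)\cap N^{J'}(x)|\ge pn\lnlnn/(500\slnn)$, uses Lemma~\ref{lem::lemma12} with the triple $(s,\,\tfrac{pn}{\slnn}s,\,\tfrac{pn\lnlnn}{500\slnn})\in O$ from Claim~\ref{cla::171} to bound the number of bad pairs, and only then applies the Lemma~\ref{lem::lemma5p} codegree cap $pn\slnn/\lnlnn$ to the bad pairs. Dividing by the small threshold $pn\lnlnn/(500\slnn)$ is precisely what produces the $\slnn/\lnlnn$ gain; the improvement in Lemma~\ref{lem::lemma5p} enters only in keeping the bad-pair error term negligible, not as the source of the factor, as you asserted.

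Your dense-regime detour through Lemma~\ref{lem::lemma9} is plausible in outline, but the overcounting step you wave at (``a given missing crossing hyperedge is produced at most $d^G_{V_1}(y,z)$ times --- again controlled through Lemmas~\ref{lem::lemma5p} and~\ref{lem::lemma12}'') is not innocuous: Lemma~\ref{lem::lemma12} controls, for a fixed apex $v$ and fixed set $S$, the number of link-pairs of $v$ with large $S$-codegree, but here you would need a bound on $\sum_{xyz\in\bar H_\pi}d_{V_1}(y,z)$ with no fixed apex, which does not follow directly. The paper sidesteps this entirely by staying inside the $\hff$-count: in the dense case ($\eps_1 n/\slnn < d^{J'}(x)\le\eps_1 n$) it sets the bad threshold at $3\eps_1 pn$, bounds bad pairs via Claim~\ref{cla::172} and Claim~\ref{cla::pn}, and then divides the $\ge\tfrac1{24}|J'|p^2n^2$ good-$\hff$ count by $3\eps_1 pn$ to land on $\tfrac{pn|J'|}{72\eps_1}\ge 30pn|J'|$, using the constraint $\tfrac1{72\eps_1}\ge 30$ you correctly identified.
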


\begin{proof}
For each $wx \in J'$, we get $wx \notin Q(\pi)$ by the assumption of Proposition~\ref{prop::lemma13}, so there are at least $p^2n^2/12$ choices of $(y,z)$ such that $\{w,x,y,z\}$ spans an $\hff$, by Claim~\ref{cla::over12}. Then there are at least $\frac{1}{12} |J'| p^2n^2 = \frac{1}{2}\sum_{x \in V_1} d^{J'}(x) \frac{p^2 n^2}{12}$ copies of $\hff$ in total.

Now consider missing crossing hyperedges $xyz \in \bar{H}_\pi$ with $x \in V_1$. Call $xyz\in \bar{H}_\pi$ \emph{bad} if
\begin{align*}
d^{J'}(x) \le \frac{\eps_1n}{\slnn} \;\;&\text{ and }\;\; |N(y,z)\cap N^{J'}(x)| \ge \frac{pn\lnlnn}{500\slnn}, \;\; \text{ or}\\
\frac{\eps_1n}{\slnn} < d^{J'}(x) \le \eps_1 n \;\;&\text{ and }\;\; |N(y,z)\cap N^{J'}(x)| \ge 3\eps_1 pn.
\end{align*}
Otherwise, call $xyz$ \emph{good}. We will show that the number of copies of $\hff$ that contain a good hyperedge from $\bar{H}_\pi$ is at least $\frac{1}{4}\sum_{x \in V_1} d^{J'}(x) \frac{p^2 n^2}{12}$. For $x \in V_1$, let $n_x$ be the number of copies of $\hff$ that contain vertex $x$ and a bad hyperedge $e \in \bar{H}_\pi$ such that $x \in e$, and let $r_x$ be the number of $(y,z)$ such that $y \in V_2$, $z \in V_3$, and $xyz$ is bad. We will repeatedly use Lemma~\ref{lem::lemma12} for some choices of $(s,r,i)\in O$ to give upper bounds for $r_x$ and then obtain upper bounds for $n_x$.
For every $x \in V_1$, we consider the following two cases.

    \textbf{Case 1:} 
    $d^{J'}(x) \le \frac{\eps_1n}{\slnn}$. If $d^{J'}(x) = 0$, then $n_x$ is trivially $0$.
    For every positive integer $s \le \frac{\eps_1n}{\slnn}$, $r = \frac{pn}{\slnn}s$, and $i =  \frac{pn \lnlnn}{500 \slnn}$, we have $(s,r,i) \in O$ (see Claim~\ref{cla::171} in the Appendix for the proof). Hence, by Lemma~\ref{lem::lemma12}, we get $r_x \le \frac{pn}{\slnn}d^{J'}(x)$. By Lemma~\ref{lem::lemma5p}, we can assume every pair of vertices has codegree at most $pn \frac{\slnn}{\lnlnn}$ in $G$. We conclude
    $$ n_x \le pn\frac{\slnn}{\lnlnn} \cdot \frac{pn}{\slnn}d^{J'}(x)\le \frac{p^2n^2}{100} d^{J'}(x).$$

    \textbf{Case 2:} $\frac{\eps_1n}{\slnn} < d^{J'}(x) \le \eps_1n$.
    For every integer $s \in (\frac{\eps_1n}{\slnn},\,\eps_1n]$, $r = \frac{pn}{500}s$, and $i = 3\eps_1 pn$, we have $(s,r,i) \in O$ (see Claim~\ref{cla::172} in the Appendix for the proof). Hence, by Lemma~\ref{lem::lemma12}, we have $r_x\le \frac{pn}{500}d^{J'}(x)$. By Lemma~\ref{lem::lemma5p}, we can assume every pair of vertices has codegree at most $pn \frac{\slnn}{\lnlnn}$ in $G$. Besides, we have $(s, pn^2/\lnn, 3pn) \in O$ for every $1\le s \le n$ (see Claim~\ref{cla::pn} in the Appendix for the proof), so by Lemma~\ref{lem::lemma12}, there are at most $pn^2 / \ln n$ pairs $(y,z)$ where $y\in V_2$, $z \in V_3$ such that $|N(y,z) \cap N^{J'}(x)| \ge 3pn$. We conclude
    $$n_x \le \frac{pn^2}{\ln n} \cdot pn \frac{\slnn}{\lnlnn} + 3r_xpn
    \le\frac{p^2n^2}{\lnlnn}\cdot \frac{n}{\slnn}+ \frac{3p^2n^2}{500}d^{J'}(x) \le \frac{p^2n^2}{100} d^{J'}(x).$$

Recall that a copy of $\hff$ contains at least one hyperedge from $\bar{H}_\pi$, which is either bad or good. 
Thus, the number of copies of $\hff$ that contain a good hyperedge from $\bar{H}_\pi$ is at least
\begin{equation}
\frac{1}{2}\sum_{x \in V_1} d^{J'}(x) \frac{p^2 n^2}{12} - \sum_{x \in V_1} \frac{p^2n^2}{100}d^{J'}(x) \ge \frac{1}{4}\sum_{x \in V_1} d^{J'}(x) \frac{p^2 n^2}{12} = \frac{1}{24} |J'| p^2n^2. \label{equ::goodhff}
\end{equation}
If $\Delta(J') \le \eps_1 n$, every good missing crossing hyperedge is in at most $3 \eps_1 pn$ copies of $\hff$ estimated in (\ref{equ::goodhff}), so
\bdm
|\bar{H}_\pi| \ge \frac{\frac{1}{24} |J'| p^2n^2}{3\eps_1 pn} \ge \frac{pn |J'|}{72 \eps_1}  \ge 30 pn |J'|.
\edm
If further $\Delta(J') \le \eps_1 n / \sqrt{\lnn}$, then every good missing crossing hyperedge is in at most $\frac{pn  \lnlnn}{ 500\slnn}$ copies of $\hff$ estimated in (\ref{equ::goodhff}), so
\bdm
|\bar{H}_\pi| \ge \frac{\frac{1}{24} |J'| p^2n^2}{ pn \lnlnn / (500\slnn)}\ge \frac{20pn |J'|\slnn}{\lnlnn}. \qedhere
\edm
\end{proof}

Now, we divide the rest of the proof into two cases according to the size of $\bar{H}_\pi$. The notations and calculations are similar in both cases.

\subsection{$|\bar{H}_\pi| \le \delta p n^3 / \lnn$} \label{subsec::bHsmall}
Define
\begin{align*}
S &\ce \{x \in V_1\;:\: d^J(x) \ge \eps_1 n / \slnn\}, \\
S_1 &\ce \{x \in S\;:\: d^H_{2,3}(x) \ge \eps_2pn^2\}, \quad \textrm{and} \quad  S_2 \ce S \sm S_1.
\end{align*}
For the following lemmas, we always assume that $|\bar{H}_\pi| \le \delta p n^3 / \lnn$ and the assumptions of Proposition~\ref{prop::lemma13} hold.

\begin{lemma} \label{lem::lemma151}
With high probability $|S| \le \eps_3 n/\slnn$.
\end{lemma}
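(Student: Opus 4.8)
The plan is to prove the contrapositive: assuming $|S| > \eps_3 n/\slnn$, I would extract from $J$ a subgraph $J'$ of small maximum degree carrying enough edges that Lemma~\ref{lem::lemma17} forces $|\bar H_\pi|$ above the standing bound $\delta pn^3/\lnn$ of this subsection, a contradiction.

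The step I expect to be the only real obstacle is the extraction of $J'$: one cannot naively keep $\eps_1 n/\slnn$ arbitrary edges at each vertex of $S$, since an edge with both endpoints in $S$ is then counted twice and the resulting graph may still have a vertex of degree larger than $\eps_1 n/\slnn$ (a clique inside $S$ is the worst case). To get around this uniformly I would establish, and probably isolate as a short claim, the following elementary fact: if a graph $J$ has a set $T$ of $m$ vertices each of degree at least $d$, then it contains a subgraph $J'$ with $\Delta(J') \le d$ and $|J'| \ge \frac{md^2}{2(m+d)} \ge \frac14\min\{md,\,d^2\}$. For the proof, let $J'$ be an edge-maximal subgraph of $J$ with $\Delta(J') \le d$ and call a vertex \emph{saturated} if its $J'$-degree equals $d$; by edge-maximality every $J$-edge outside $J'$ has a saturated endpoint, so each $x \in T$ has at least $d - d^{J'}(x)$ saturated $J$-neighbors. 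Writing $Z$ for the set of saturated vertices, a double count of the $J$-edges between $T$ and $Z$ gives
\[
md - 2|J'| \;\le\; \sum_{x\in T}(d - d^{J'}(x)) \;\le\; m\,|Z| \;\le\; \frac{2m\,|J'|}{d},
\]
using $\sum_{x\in T} d^{J'}(x) \le 2|J'|$, the saturated-neighbor bound together with $\sum_{y\in Z}|N_J(y)\cap T|\le m|Z|$, and $|Z|\,d \le 2|J'|$, respectively; rearranging yields the stated lower bound on $|J'|$.

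Applying this with $d = \eps_1 n/\slnn$ and $m = |S| > \eps_3 n/\slnn$, and using $\eps_3 \le \eps_1$ so that both $md$ and $d^2$ are at least $\eps_1\eps_3 n^2/\lnn$, I obtain a subgraph $J' \subseteq J$ with $\Delta(J') \le \eps_1 n/\slnn$ and $|J'| \ge \frac{\eps_1\eps_3}{4}\cdot\frac{n^2}{\lnn}$. Since the assumptions of Proposition~\ref{prop::lemma13} are in force, the second part of Lemma~\ref{lem::lemma17} applies to this $J'$, giving with high probability
\[
|\bar H_\pi| \;\ge\; 20\,pn\,|J'|\,\frac{\slnn}{\lnlnn} \;\ge\; \frac{5\,\eps_1\eps_3}{\slnn\,\lnlnn}\;pn^3 .
\]
Because $\slnn/\lnlnn \to \infty$, for $n$ large this exceeds $\delta pn^3/\lnn$, contradicting the standing assumption $|\bar H_\pi| \le \delta pn^3/\lnn$; hence $|S| \le \eps_3 n/\slnn$ with high probability.

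A minor bookkeeping point: to keep the contradiction clean I would run the whole argument on the almost-sure event provided by Lemma~\ref{lem::lemma17} (which quantifies over all subgraphs of the shadow of $H_1$ with the relevant degree bound), so that on that event the inequality $|\bar H_\pi| \ge 20pn|J'|\slnn/\lnlnn$ is a deterministic consequence of the construction and the comparison with $\delta pn^3/\lnn$ is immediate. Note the margin is comfortable: the construction only needs $|J'| = \Omega(n^2/\lnn)$, whereas the $\slnn/\lnlnn$ gain in Lemma~\ref{lem::lemma17} would already suffice with $|J'|$ as small as $\Theta\!\left(n^2\lnlnn/(\lnn\,\slnn)\right)$.
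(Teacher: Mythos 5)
Your proof is correct, but it takes a genuinely different route from the paper's. The paper proves Lemma~\ref{lem::lemma151} directly, without invoking Lemma~\ref{lem::lemma17}: it counts copies of $\hff$ contributed by \emph{all} edges of $J$ (each gives $\ge p^2n^2/12$ copies by Claim~\ref{cla::over12}), upper bounds the total count by splitting pairs $(y,z)$ into those with codegree at most $3pn$ (each $\hff$ then charged to a hyperedge of $\bar H_\pi$ at rate $\le 3pn$) and the few high-codegree pairs controlled by Lemma~\ref{lem::lemma5pn} together with the uniform $pn\slnn/\lnlnn$ codegree bound of Lemma~\ref{lem::lemma5p}, and thereby derives the global edge bound $|J| \le 48\delta n^2/\lnn$. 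The degree condition defining $S$ then gives $|S| \le \frac{96\delta}{\eps_1}\frac{n}{\slnn} \le \eps_3 n/\slnn$, which is where the constraint $100\delta/\eps_1 \le \eps_3$ in the choice of constants is actually used. Your argument instead passes through a contrapositive: you extract from $J$ a bounded-degree subgraph $J'$ with $\Omega(n^2/\lnn)$ edges (via an elementary and correct edge-maximal subgraph argument, including the double count $\sum_{x\in T}|N_J(x)\cap Z| = \sum_{y\in Z}|N_J(y)\cap T| \le m|Z|$), then plug this $J'$ into the second conclusion of Lemma~\ref{lem::lemma17} to make $|\bar H_\pi|$ exceed $\delta pn^3/\lnn$. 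What the paper's direct count buys is self-containedness and a transparent use of Lemma~\ref{lem::lemma5pn}; what your route buys is conceptual economy (it reuses the already-established Lemma~\ref{lem::lemma17} instead of redoing the good/bad hyperedge bookkeeping) and a slightly stronger statement, since your contradiction works for $n$ large under $\eps_3 \le \eps_1$ alone, without needing the $100\delta/\eps_1 \le \eps_3$ constraint. Both are valid; yours would make a nice alternative presentation, and the extraction claim ($T$ of $m$ vertices of degree $\ge d$ gives a subgraph of max degree $\le d$ with $\ge \frac{1}{4}\min\{md,d^2\}$ edges) is a reusable lemma worth isolating.
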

\begin{proof}
For each $wx \in J$, we get $wx \notin Q(\pi)$ by the assumption of Proposition~\ref{prop::lemma13}, so there are at least $p^2n^2/12$ choices of $(y,z)$ such that $\{w,x,y,z\}$ spans an $\hff$, by Claim~\ref{cla::over12}. Then there are at least $\frac{1}{12} |J| p^2n^2$ copies of $\hff$ in total. On the other hand, at least one of $wyz$ and $xyz$ must be in $\bar{H}_\pi$. For $xyz$ where $d_{V_1}(y,z) \le 3pn$, $xyz$ can be in at most $3pn$ copies of $\hff$. Hence the number of copies of $\hff$ containing a pair $(y,z) \in V_2\times V_3$ with $d_{V_1}(y,z) \le 3pn$ is at most $3|\bar{H}_\pi|pn$.
By Lemma~\ref{lem::lemma5pn}, there can be at most $n^2 e^{-\sqrt{\ln n}}$ pairs $(y,z)$ with $d_{V_1}(y,z) \ge 3pn$, and by Lemma~\ref{lem::lemma5p}, for every pair $(y,z)$, we can assume $d_{V_1}(y,z) \le d(y,z) \le pn\frac{\sqrt{\ln n}}{\lnlnn}$. Hence the number of copies of $\hff$ containing a pair $(y,z) \in V_2\times V_3$ with $d_{V_1}(y,z) \ge 3pn$ is at most $n^2 e^{-\sqrt{\ln n}}\left(pn\frac{\sqrt{\ln n}}{\lnlnn}\right)^2$.
Therefore, we get

\begin{equation}
n^2 e^{-\sqrt{\ln n}}\left(pn\frac{\sqrt{\ln n}}{\lnlnn}\right)^2+ |\bar{H}_\pi|\cdot 3pn \ge \frac{|J|p^2n^2}{12}. \label{equ::151}
\end{equation}
Note that $n^2 e^{-\sqrt{\ln n}}\left(pn\frac{\sqrt{\ln n}}{\lnlnn}\right)^2 \le \frac{\delta p^2n^4}{\ln n}$. By the assumption $|\bar{H}_\pi| \le \frac{\delta p n^3}{\ln n}$, we have $|\bar{H}_\pi|\cdot 3pn \le \frac{3\delta p^2 n^4}{\ln n}$. Hence, we get $|J| \le \frac{48\delta n^2}{\ln n}$. Every vertex in $S$ has degree at least $\frac{\eps_1n}{\slnn}$ in $J$, so $\frac{\eps_1 n}{ \slnn} |S| \le 2|J| \le \frac{96 \delta n^2 }{ \ln n}$, which confirms that $|S|\le \frac{100\delta}{\eps_1} \cdot \frac{n}{\slnn} \le \frac{\eps_3n}{\slnn}$.
\end{proof}

\begin{lemma} \label{lem::lemma161}
With high probability $|\bar{H}_\pi| \ge 20pn^2 |S_1|$.
\end{lemma}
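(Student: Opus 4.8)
The plan is to lower-bound $|\bar H_\pi|$ by counting, for each $x \in S_1$, the missing crossing hyperedges through $x$, and then argue these are essentially disjoint across different $x$. Fix $x \in S_1$. By definition $d^J(x) \ge \eps_1 n/\slnn$, so there are many pairs $wx \in J$ with $w \in N^J(x) \subseteq V_1$. Also $d^H_{2,3}(x) \ge \eps_2 p n^2$, so the link of $x$ in $H$ between $V_2$ and $V_3$ is a dense bipartite graph $N^H_{2,3}(x)$. The idea is: for $w \in N^J(x)$, pick $v_0$ with $x w v_0 \in H$; then for most pairs $yz \in N^H_{2,3}(x)$ we have $wyz \in G_\pi$ (using that $wx \notin Q(\pi)$, hence also $w \notin Q(\pi)$-related pairs, so $|L_{2,3}(w,x)|$ is large) and $\{w,x,y,z\}$ spans an $\hff$ with witness edge $xwv_0$; since $xyz \in H$ already, the copy of $F_5$ forces $wyz \in \bar H_\pi$. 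Thus each pair $wx \in J$ together with $yz \in N^H_{2,3}(x)$ produces a missing crossing hyperedge $wyz$ of the form (vertex in $N^J(x) \subseteq V_1$) $\times$ (pair in the link of $x$).

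The quantitative core is to set up this as an application of Lemma~\ref{lem::lemma9}. Take $A = N^J(x) \subseteq V_1 \setminus \{x\}$, which has size $|A| = d^J(x) \ge \eps_1 n/\slnn$; take $T = N^H_{2,3}(x)$, which has size $\ge \eps_2 p n^2$; and take $E = \{x w v_0 : w \in A\} \cap H \subseteq \{xwv : w \in A, xwv \in G\}$ — for each $w \in A$ we can pick such a $v_0$. Then $G(x, E, A, T)$, as defined before Lemma~\ref{lem::lemma9}, consists of triples $wyz$ with $w \in A$, $yz \in T$, and (the defining condition of $K(x,E,A,T)$) $w \notin e$, $y \notin e$, $z \notin e$ for the witness $e = xwv_0 \in E$; every such $wyz$ that lies in $G$ yields a copy of $F_5$, namely $\{yzx, yzw, xwv_0\}$, so $wyz \in \bar H_\pi$ since $xyz \in H$. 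By Lemma~\ref{lem::lemma9}, w.h.p.\ $|G(x,E,A,T)| = (1 \pm \eps) p |A||T| \ge (1-\eps) p \cdot \frac{\eps_1 n}{\slnn} \cdot \eps_2 p n^2$, which is of order $\frac{\eps_1\eps_2}{\slnn} p^2 n^3$.

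The remaining issue is double counting: a given missing crossing hyperedge $wyz$ could arise from several $x \in S_1$. But $w \in V_1$, $yz \in V_2 \times V_3$, and for $wyz$ to be counted for $x$ we need $yz \in N^H_{2,3}(x)$, i.e. $xyz \in H$, so $x \in N^H_1(y,z)$; hence the multiplicity of $wyz$ is at most $d^H_1(y,z) \le d(y,z)$. Here I would invoke Lemma~\ref{lem::lemma5pn} to say that apart from at most $n^2 e^{-\slnn}$ exceptional pairs $(y,z)$ (whose total contribution is negligible, being at most $n^2 e^{-\slnn} \cdot n \ll \delta p n^3/\lnn$), all pairs have $d(y,z) \le 3pn$, so each non-exceptional missing crossing hyperedge is counted at most $3pn$ times. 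Summing the bound $|G(x,E,A,T)| \gtrsim \frac{\eps_1\eps_2}{\slnn}p^2 n^3$ over the $x \in S_1$ and dividing by the multiplicity $3pn$ gives
\[
|\bar H_\pi| \ge \frac{1}{3pn}\left( |S_1| \cdot (1-\eps)\frac{\eps_1\eps_2}{\slnn} p^2 n^3 - n^3 e^{-\slnn}\right) \ge \frac{\eps_1\eps_2}{4\slnn} p n^2 |S_1| \ge 20 p n^2 |S_1|,
\]
where the last step uses the constant inequality $\frac{\frac14 \eps_1\eps_2}{3\eps_3} \ge 20$ together with $|S_1| \le |S| \le \eps_3 n/\slnn$ from Lemma~\ref{lem::lemma151} to control the error term. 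The main obstacle is making the double-counting estimate clean: one must be careful that the witness-edge condition $y,z \notin e$ in $K(x,E,A,T)$ only excludes $O(pn\slnn/\lnlnn)$ pairs from $T$ per $w$ (this is exactly why Lemma~\ref{lem::lemma9} already absorbs it), and that the exceptional-pair contribution really is lower order — both handled by Lemmas~\ref{lem::lemma5p}, \ref{lem::lemma5pn}, and \ref{lem::lemma9} as packaged above.
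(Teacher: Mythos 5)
Your overall approach — applying Lemma~\ref{lem::lemma9} with $v=x$, $A=N^J(x)$, $E$ a set of witness edges, then dividing the count of $\hff$'s by the multiplicity of each missing crossing hyperedge — is the right skeleton and matches the paper. However, your multiplicity bound loses a factor of order $\slnn$, and the final inequality as you have written it does not hold.

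The point you miss is that a missing crossing hyperedge $wyz$ with $yz\in V_2\times V_3$ is counted once for each $x\in S_1$ with $xyz\in H$, so the multiplicity is at most $|N^H_1(y,z)\cap S_1|\le d_{S_1}(y,z)$ — not $d^H_1(y,z)$ or $d(y,z)$. You noted that $x$ must lie in $S_1$ but then discarded this, bounding the multiplicity by $d(y,z)\le 3pn$ via Lemma~\ref{lem::lemma5pn}. Since $|S_1|\le\eps_3 n/\slnn$ is a tiny set, the codegree of $(y,z)$ into $S_1$ is typically on the order of $p|S_1|\approx \eps_3 pn/\slnn$, smaller than $3pn$ by a factor $\slnn/\eps_3$. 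Without exploiting this, your chain
\[
\frac{1}{3pn}\cdot (1-\eps)\frac{\eps_1\eps_2}{\slnn}p^2n^3|S_1| \ \approx\ \frac{\eps_1\eps_2}{3\slnn}\,pn^2|S_1|
\]
has a stray $1/\slnn$ and cannot be $\ge 20 pn^2|S_1|$; the last step in your displayed inequality is simply false. (The constant inequality $\tfrac{\frac14\eps_1\eps_2}{3\eps_3}\ge 20$ you cite has $\eps_3$ in the denominator precisely because the correct multiplicity bound is $3\eps_3 pn/\slnn$; you never produce that $\eps_3$ or the matching $\slnn$.)

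The paper fixes this by defining $T_x=\{yz: y\in V_2, z\in V_3, xyz\in H,\, d_{S_1}(y,z)\le 3\eps_3 pn/\slnn\}$ and applying Lemma~\ref{lem::lemma9} with $T=T_x$. It then shows via Lemma~\ref{lem::lemma12} and Claim~\ref{cla::161} that the excluded pairs (those with $d_{S_1}(y,z)> 3\eps_3 pn/\slnn$ and $xyz\in G$) number at most $\eps_2pn^2/2$, so $|T_x|\ge \eps_2pn^2/2$ still holds. Every $\hff$ produced now uses a pair $yz$ with $d_{S_1}(y,z)\le 3\eps_3 pn/\slnn$, so the multiplicity of each $wyz\in\bar H_\pi$ is at most $3\eps_3 pn/\slnn$, and the two $\slnn$ factors cancel. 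You should restrict $T$ in this way rather than taking $T=N^H_{2,3}(x)$ outright and hoping a crude $d(y,z)\le 3pn$ bound suffices.
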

\begin{proof}
We can assume that $|S_1| > 0$, since otherwise this inequality is trivial. For every vertex $x \in S_1$, define
\bdm
T_x :=  \{yz \;:\: y\in V_2,\, z\in V_3,\, xyz \in H,\, d_{S_1\sm \{x\}}(y,z) \le 3 \eps_3 pn/\slnn\}.
\edm
By Lemma~\ref{lem::lemma151}, we know that w.h.p. $|S_1|\le |S| \le \eps_3 n / \slnn$. For every $s\le \eps_3n/\slnn $, we have $(s, \eps_2 pn^2/2, 3\eps_3 pn / \slnn) \in O$ (see Claim~\ref{cla::161} in the Appendix for the proof), so by Lemma~\ref{lem::lemma12}, there can be at most $\eps_2 pn^2/2$ pairs of $(y,z)$ such that $xyz \in G$ and $d_{S_1 \sm \{x\}}(y,z) \ge 3 \eps_3 pn/\slnn$. By the definition of $S_1$, we have $d^H_{2,3}(x) \ge \eps_2pn^2$ for every $x \in S_1$. Thus, we get $|T_x| \ge  \eps_2 pn^2/2$.

Now, we count the copies of $\hff = \{x,w,y,z\}$ where $x \in S_1$, $w\in N^J(x)$, and $\{y,z\} \in T_x$. Note that $wyz$ must be in $\bar{H}_\pi$, since $xyz \in H$. We have that $|N^{J}(x)| \ge \eps_1 n / \slnn$ by the definition of $S$, and we just confirmed $|T_x| \ge  \eps_2 pn^2/2$. Applying Lemma~\ref{lem::lemma9} by setting $v = x$, $E = \{e \in H_1: x \in e\}$, $A = N^{J}(x)$, and $T = T_x$, we get the number of such copies of $\hff$ is w.h.p. at least
\begin{equation}
\sum_{x \in S_1} \frac{1}{2}p|N^{J}(x)||T_x|\ge \sum_{x \in S_1} \frac{1}{2}p\cdot \frac{\eps_1 n}{\slnn}\cdot \frac{1}{2} \eps_2 pn^2  \ge \frac{\eps_1\eps_2p^2 n^3 |S_1|}{4 \slnn}. \label{equ::161}
\end{equation}
Every $wyz \in \bar{H}_\pi$ can be in at most $3\eps_3pn/\slnn$ copies of $\hff$ evaluated in (\ref{equ::161}), because $x$ is assumed to be in $S_1$ and $d_{S_1}(y,z) \le 3 \eps_3 pn/\slnn$. Therefore,
\bdm
|\bar{H}_\pi| \ge \frac{\frac{1}{4}\eps_1\eps_2p^2 n^3  |S_1|/\slnn}{3\eps_3pn/\slnn} \ge 20 pn^2|S_1|. \qedhere
\edm

\end{proof}

\begin{lemma} \label{lem::lemma181}
With high probability $|\bar{H}_\pi| \ge \frac{1}{20} p n^2 |S_2|$.
\end{lemma}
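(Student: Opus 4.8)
The plan is to exploit directly the defining property of $S_2$: each $x \in S_2$ lies in $V_1$, has $d^J(x) \ge \eps_1 n/\slnn$, but crucially $d^H_{2,3}(x) < \eps_2 pn^2$, whereas in $G$ the vertex $x$ sees about $pn^2/9$ crossing triples into $V_2 \times V_3$. The gap is a large set of missing crossing hyperedges through $x$, and since a crossing hyperedge meets $V_1$ in exactly one vertex, these pools are disjoint over $x \in S_2$. So, in contrast with Lemma~\ref{lem::lemma161}, no $\hff$-counting is needed here.

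In detail, I would first apply Lemma~\ref{lem::degCro} with a small $\eps$ (permissible since $p > C\slnn/n \gg C/n$) to assume w.h.p.\ that $d_{2,3}(x) = (1\pm\eps)p|V_2||V_3|$ for every $x \in V_1$. As $\pi$ is balanced, $|V_2|,|V_3| \ge (1-10^{-10})n/3$, so for $\eps$ small enough $d_{2,3}(x) \ge (1-\eps)(1-10^{-10})^2 pn^2/9 \ge pn^2/10$. Every triple containing $x \in V_1$ with one vertex in $V_2$ and one in $V_3$ already lies in $K_\pi$, hence $d^{G_\pi}_{2,3}(x) = d_{2,3}(x)$ and $d^{H_\pi}_{2,3}(x) = d^H_{2,3}(x)$; the latter is $< \eps_2 pn^2$ for $x \in S_2$. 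Therefore
\[
 d^{\bar{H}_\pi}_{2,3}(x) \;=\; d^{G_\pi}_{2,3}(x) - d^{H_\pi}_{2,3}(x) \;\ge\; \Big(\tfrac{1}{10} - \eps_2\Big) pn^2 \;\ge\; \tfrac{1}{20} pn^2 ,
\]
using the constraint $\tfrac{1}{10} - \eps_2 \ge \tfrac{1}{20}$ from the choice of constants.

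To finish, I would sum over $S_2$ and observe that $\sum_{x \in S_2} d^{\bar{H}_\pi}_{2,3}(x)$ counts each $e \in \bar{H}_\pi$ at most once — precisely when the unique vertex of $e$ in $V_1$ belongs to $S_2$ — so
\[
 |\bar{H}_\pi| \;\ge\; \sum_{x \in S_2} d^{\bar{H}_\pi}_{2,3}(x) \;\ge\; \tfrac{1}{20} pn^2 |S_2| ,
\]
with the statement trivial when $S_2 = \es$. There is no genuine obstacle; the only points requiring a little care are picking $\eps$ small enough in Lemma~\ref{lem::degCro} that $(1-\eps)(1-10^{-10})^2 \ge 9/10$, and confirming the ``at most once'' double-counting claim, which holds because every crossing hyperedge has exactly one vertex in each part.
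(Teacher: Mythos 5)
Your argument is the same as the paper's: apply Lemma~\ref{lem::degCro} to get $d_{2,3}(x) \ge pn^2/10$ for each $x \in S_2$, subtract $d^H_{2,3}(x) < \eps_2 pn^2$ to find at least $pn^2/20$ missing crossing hyperedges through $x$, and sum over the disjoint pools. The proposal just spells out the disjointness and the balancedness calculation in more detail; there is no substantive difference.
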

\begin{proof}
For every vertex $x \in S_2$, we have $d^H_{2,3}(x) < \eps_2 p n^2$ by the definition of $S_2$, but by Lemma~\ref{lem::degCro}, $d_{2,3}(x) \ge pn^2/10$. Thus, there are at least $pn^2/20$ hyperedges in $\bar{H}_\pi$ containing $x$, so $|\bar{H}_\pi| \ge |S_2| pn^2/20$.
\end{proof}

Finally, we deduce Proposition~\ref{prop::lemma13} assuming $|\bar{H}_\pi| \le \delta p n^3 / \lnn$.

\begin{proof} [Proof of Proposition~\ref{prop::lemma13}]
We will show that with high probability $|\bar{H}_\pi| \ge 3|H_1|$, by using the lower bounds in Lemmas~\ref{lem::lemma17},~\ref{lem::lemma161}, and~\ref{lem::lemma181}. Partition $H_1$ into the following three sets.
\begin{itemize}
  \item $H_1(1) = \{e \in H_1 : |e \cap S| \ge 2 \textrm{ or } |e \cap (V_1\sm S)| \ge 2\}$.
  \item $H_1(2) = \{e \in H_1\sm H_1(1) : |e \cap S_1| =1\}$. Hence, $H_1(2)$ contains those hyperedges in $H_1$ with exactly one vertex in $S_1$, one vertex in $V_1 \sm S$, and one vertex in $[n] \sm V_1$.
  \item $H_1(3) = H_1 \sm (H_1(1) \cup H_1(2))$. Hence, $H_1(3)$ contains those hyperedges in $H_1$ with exactly one vertex in $S_2$, one vertex in $V_1 \sm S$, and one vertex in $[n] \sm V_1$.
\end{itemize}
There are three cases needed to be handled.

  \textbf{Case 1:} $3|H_1(1)| \ge |H_1|$.

  Let $J' \ce J[S] \cup J[V_1 \sm S]$, where $J[S]$ and $J[V_1 \sm S]$ are the induced subgraph of $J$ on $S$ and $V_1 \sm S$ separately. By Lemma~\ref{lem::lemma5p}, $|H_1(1)|\le |J'| pn\slnn / \lnlnn$. For every vertex $x \in S$, $d^{J'}(x)$, the degree of $x$ in $J'$, is at most $|S|-1 \le \eps_3 n/\slnn \le \eps_1 n / \slnn$, by Lemma~\ref{lem::lemma151}. For every vertex $x \in V_1\sm S$, we have $d^{J'}(x) \le d^J(x) \le \eps_1 n / \slnn$, by the definition of $S$. Hence, $\Delta(J') \le \eps_1n / \slnn$. Then, by Lemma~\ref{lem::lemma17},
  $$|\bar{H}_\pi| \ge \frac{20pn |J'| \slnn}{\lnlnn} \ge 20 |H_1(1)| \ge 3.3|H_1|.$$

  \textbf{Case 2:} $3|H_1(2)| \ge |H_1|$.

  For every vertex $x \in S_1$, there are at most $2pn^2$ hyperedges in $H_1 \sm H_1(1)$ containing it, by Lemma~\ref{lem::vD}. Hence, $|H_1(2)|\le 2pn^2 |S_1|$. Then, by Lemma~\ref{lem::lemma161},
  $$|\bar{H}_\pi| \ge 20pn^2|S_1| \ge 10 |H_1(2)| \ge 3.3|H_1|.$$

  \textbf{Case 3:} $3|H_1(3)| \ge |H_1|$.

  By the definition of $S_2$, every vertex $x \in S_2$ has $d^H_{2,3}(x) \le \eps_2pn^2$. Recall that $\pi$ maximizes $|H_\pi|$, so $d^H_{1,2}(x), d^H_{1,3}(x) \le \eps_2pn^2$. Hence, $|H_1(3)|\le 2\eps_2pn^2|S_2|$. Then, by Lemma~\ref{lem::lemma181},
  \bdm
  |\bar{H}_\pi| \ge \frac{1}{20}pn^2|S_2| \ge \frac{1}{20}\cdot \frac{|H_1(3)|}{2\eps_2} \ge 10 |H_1(3)| \ge 3.3|H_1|.
  \edm
Thus, we have $|\bar{H}_\pi| \ge 3.3 |H_1| \ge 3 |H_1|$, where the equality is possible only if $|H_1| = 0$. Recalling our assumption that $|H_1|\ge |H_2|,|H_3|$, we have that $|H_1| = 0$ implies $\sum_{i=1}^3 |H_i| = 0$, which means $H$ is tripartite.
\end{proof}

\begin{remark} \label{rem::lnlnnintheproof}
{\rm The $\lnlnn$ factor in Lemma~\ref{lem::lemma5p} plays an important role in the case $3|H_1(1)| \ge |H_1|$. Without this factor, we would need to conclude $|\bar{H}_\pi| \ge 20pn |J'|\slnn$ from Lemma~\ref{lem::lemma17}, which we can only obtain when $\Delta(J') \le \eps_1n / \ln^{1/2+c}n$ for some constant $c>0$. Then, we would need to modify the definition of $S$ to be $\{x \in V_1\;:\: d^J(x) \ge \eps_1 n / \ln^{1/2+c}n\}$. However, the assumption of Lemma~\ref{lem::lemma9} is no longer valid for sets $A$ of smaller size. Therefore, we would not be able to use Lemma~\ref{lem::lemma9} in the proof of Lemma~\ref{lem::lemma161}.}
\end{remark}

\subsection{$|\bar{H}_\pi| > \delta p n^3 / \ln n$} \label{subsec::bHlarge}
All the notation and theorems here are similar to those in Section~\ref{subsec::bHsmall}, so we will just point out the necessary modifications. Define 
\begin{align*}
S' &\ce \{x \in V_1: d^J(x) \ge \eps_1 n\}, \\ 
S'_1 &\ce \{x \in S': d^H_{2,3}(x) \ge \eps_2pn^2\}, \quad \textrm{and} \quad S'_2 \ce S' \sm S'_1. 
\end{align*}
For the following lemmas, we always assume that $|\bar{H}_\pi| > \delta p n^3 / \ln n$ and the assumptions of Proposition~\ref{prop::lemma13} hold. Note that by the assumption in Proposition~\ref{prop::lemma13}, we have $|H_1|, |H_2|,|H_3| \le \delta p n^3/3$. If $|\bar{H}_\pi| \ge 3|H_1|$, then we are done, so we can assume that $|\bar{H}_\pi| < 3|H_1| \le \delta p n^3$.
\begin{lemma} \label{lem::lemma152}
With high probability $|S'| \le \eps_3 n$.
\end{lemma}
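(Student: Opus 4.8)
The plan is to follow the proof of Lemma~\ref{lem::lemma151} almost line for line. The only substantive difference is that here $|\bar H_\pi|$ is no longer tiny, so the double count of copies of $\hff$ will only yield $|J| = O(\delta n^2)$ rather than $O(\delta n^2/\lnn)$; this weaker estimate still suffices because the threshold defining $S'$ has been relaxed from $\eps_1 n/\slnn$ to $\eps_1 n$.

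First I would double count copies of $\hff$. For the lower bound: by assumption~(2) of Proposition~\ref{prop::lemma13}, $J$ is disjoint from $Q(\pi)$, so Claim~\ref{cla::over12} applies to every edge $x_1x_2$ of $J$ and produces at least $p^2n^2/12$ pairs $(y,z)\in V_2\times V_3$ with $\{x_1,x_2,y,z\}$ spanning an $\hff$; since every $\hff$-spanning $4$-set has exactly two vertices in $V_1$, one in $V_2$ and one in $V_3$, it is produced by at most one edge of $J$, namely its $V_1$-pair, so there are at least $\frac{1}{12}|J|p^2n^2$ copies of $\hff$ in total. For the upper bound: every copy $\{w,x,y,z\}$, with $w,x\in V_1$, $y\in V_2$, $z\in V_3$, together with the relevant hyperedge of $H$ spans an $F_5$ in $G$, so at least one of $wyz,xyz$ lies in $\bar H_\pi$. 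Copies with $d(y,z)\le 3pn$ can be charged to a hyperedge of $\bar H_\pi$ together with the remaining $V_1$-vertex, which lies in $N(y,z)$, so there are at most $3pn\,|\bar H_\pi|$ of them; for copies with $d(y,z)>3pn$, Lemma~\ref{lem::lemma5pn} bounds the number of such pairs $(y,z)$ by $n^2e^{-\slnn}$, and Lemma~\ref{lem::lemma5p} bounds the number of copies through each such pair by $(pn\slnn/\lnlnn)^2$, giving at most $n^2e^{-\slnn}(pn\slnn/\lnlnn)^2$ copies in this case.

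Putting the bounds together yields
\bdm
\frac{1}{12}|J|p^2n^2 \;\le\; 3pn\,|\bar H_\pi| + n^2e^{-\slnn}\left(\frac{pn\slnn}{\lnlnn}\right)^2 .
\edm
Since $|\bar H_\pi|\le\sum_{i=1}^3|H_i|\le\delta pn^3$ by assumption~(1) of Proposition~\ref{prop::lemma13}, the first term is at most $3\delta p^2n^4$, while the second is $o(p^2n^4)$ because $e^{-\slnn}$ decays faster than any power of $\lnn$; hence $|J|\le 48\delta n^2$. As every vertex of $S'$ has degree at least $\eps_1 n$ in $J$, we get $\eps_1 n\,|S'|\le\sum_{x\in S'}d^J(x)\le 2|J|\le 96\delta n^2$, so $|S'|\le\frac{96\delta}{\eps_1}n\le\frac{100\delta}{\eps_1}n\le\eps_3 n$ by the choice of the constants, which is the claim. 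The proof presents no genuine difficulty; the only points needing care are the constant bookkeeping — in particular the constraint $\frac{100\delta}{\eps_1}\le\eps_3$ from the fixed list of constants — and checking that the large-codegree error term is negligible against $\delta p^2n^4$. It is worth noting that the standing hypothesis $|\bar H_\pi|>\delta pn^3/\lnn$ of this subsection is not used in this lemma; only the upper bound $|\bar H_\pi|\le\delta pn^3$ matters.
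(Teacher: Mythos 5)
Your proof is correct and takes essentially the same approach as the paper: you re-derive inequality~(\ref{equ::151}) from the double count of $\hff$ copies (which the paper simply cites as still holding) and then substitute the upper bound $|\bar{H}_\pi|\le\delta pn^3$ to get $|J|\le 48\delta n^2$ and hence $|S'|\le\frac{100\delta}{\eps_1}n\le\eps_3 n$. Your remark that only the upper bound on $|\bar{H}_\pi|$ (and not the standing lower bound of the subsection) is used also matches how the paper actually argues.
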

\begin{proof}
Inequality (\ref{equ::151}) still holds, and now we get $|J| \le 48 \delta n^2$. Every vertex in $S'$ has degree at least $\eps_1 n$ in $J$, so $\eps_1 n |S'| \le 2 \cdot 48 \delta n^2$, which gives $|S'| \le \frac{100\delta}{\eps_1} n = \eps_3 n$.
\end{proof}

\begin{lemma} \label{lem::lemma162}
With high probability $|\bar{H}_\pi| \ge 20pn^2 |S'_1|$.
\end{lemma}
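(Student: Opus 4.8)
The plan is to transcribe the proof of Lemma~\ref{lem::lemma161}, replacing every $\slnn$-scaled threshold by its unscaled counterpart: in the regime $|\bar{H}_\pi| > \delta p n^3/\lnn$ the vertices of $S'$ have $J$-degree at least $\eps_1 n$ and, by Lemma~\ref{lem::lemma152}, $|S'| \le \eps_3 n$. We may assume $|S'_1| > 0$. For each $x \in S'_1$ I would set
\[
T_x \ce \{yz : y \in V_2,\ z \in V_3,\ xyz \in H,\ d_{S'_1}(y,z) \le 3\eps_3 pn\}.
\]
Since $|S'_1| \le |S'| \le \eps_3 n$ by Lemma~\ref{lem::lemma152}, and since (see the last paragraph) $(s, \eps_2 pn^2/2, 3\eps_3 pn) \in O$ for every $s \le \eps_3 n$, Lemma~\ref{lem::lemma12} gives that at most $\eps_2 pn^2/2$ pairs $(y,z) \in V_2 \times V_3$ have $xyz \in G$ and $d_{S'_1}(y,z) \ge 3\eps_3 pn$; subtracting these from the at-least-$\eps_2 pn^2$ pairs with $xyz \in H$ (the defining property of $S'_1$) leaves $|T_x| \ge \eps_2 pn^2/2$.

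Next I would count copies of $\hff$ of the form $\{x,w,y,z\}$ with $x \in S'_1$, $w \in N^J(x)$ and $\{y,z\} \in T_x$; since $xyz \in H$, the hyperedge $wyz$ is forced into $\bar{H}_\pi$, exactly as in the proof of Lemma~\ref{lem::lemma161}. For $x \in S'_1$ we have $|N^J(x)| \ge \eps_1 n$ (definition of $S'$) and $|T_x| \ge (\eps_2/2)pn^2$, so applying Lemma~\ref{lem::lemma9} with $v = x$, $E = H_1$, $A = N^J(x)$ and $T = T_x$ shows the number of such copies is at least
\[
\sum_{x \in S'_1} \tfrac12 p\,|N^J(x)|\,|T_x| \ \ge\ \sum_{x \in S'_1} \tfrac12 p \cdot \eps_1 n \cdot \tfrac12 \eps_2 pn^2 \ =\ \tfrac14 \eps_1\eps_2 p^2 n^3 |S'_1|.
\]
On the other hand, a fixed $wyz \in \bar{H}_\pi$ is counted at most $3\eps_3 pn$ times here, since the $S'_1$-vertex $x$ satisfies $xyz \in G$, hence $x \in N_{S'_1}(y,z)$, while every pair that occurs has $d_{S'_1}(y,z) \le 3\eps_3 pn$. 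Dividing,
\[
|\bar{H}_\pi| \ \ge\ \frac{\tfrac14 \eps_1\eps_2 p^2 n^3 |S'_1|}{3\eps_3 pn} \ =\ \frac{\eps_1\eps_2}{12\eps_3}\,pn^2|S'_1| \ \ge\ 20\,pn^2|S'_1|,
\]
the last step holding by the standing choice of $\eps_1,\eps_2,\eps_3$, which was arranged so that $\tfrac14\eps_1\eps_2/(3\eps_3) \ge 20$.

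The only genuinely new ingredient, and hence the only place where care is needed, is the membership $(s, \eps_2 pn^2/2, 3\eps_3 pn) \in O$ for every $s \le \eps_3 n$; this I would verify in the Appendix by the same Chernoff-type estimate as in Claim~\ref{cla::161}. It is in fact no harder than that claim: from $s \le \eps_3 n$ one gets $\b{s}{3\eps_3 pn} p^{3\eps_3 pn} \le (e/3)^{3\eps_3 pn}$, and since this decaying factor is raised to the power $r = \eps_2 pn^2/2$ while the counting factor $n\b{n}{s}\b{n^2}{r}$ only contributes $\exp(O(pn^2))$ (the $\ln(1/p)$ from $\b{n^2}{r}$ being cancelled by the $p^{3\eps_3 pn}$), we obtain $g(p, s, \eps_2 pn^2/2, 3\eps_3 pn) \le \exp\big(O(pn^2) - \Omega(p^2 n^3)\big) = o(n^{-5})$ because $pn \ge C\slnn \to \infty$. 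Every other step of the argument is obtained from the proof of Lemma~\ref{lem::lemma161} by the substitutions indicated above, so I do not expect any further obstacle.
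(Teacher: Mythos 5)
Your proof is correct and matches the paper's argument essentially line by line, including the definition of $T'_x$, the use of Lemma~\ref{lem::lemma152}, the membership $(s,\eps_2 pn^2/2, 3\eps_3 pn)\in O$ for $s\le\eps_3 n$ (which is precisely Claim~\ref{cla::162} in the paper's appendix), the application of Lemma~\ref{lem::lemma9}, and the final division by $3\eps_3 pn$. The only minor imprecision is in your parenthetical about the $\ln(1/p)$ cancellation (it is the single factor of $p$ in $\left(p\binom{s}{i}p^{i}\right)^{r}$, not $p^{3\eps_3 pn}$, that offsets the $p^{-r}$ from $\binom{n^2}{r}$), but this does not affect the correctness of the argument.
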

\begin{proof}
We can assume that $|S'_1| \ge 1$, since otherwise this inequality is trivial. For each $x \in S'_1$, define
$$T'_x := \{yz \;:\: y\in V_2,\, z\in V_3,\, xyz \in H, d_{S'_1 \sm\{x\}}(y,z) \le 3 \eps_3 pn\}.$$
By Lemma~\ref{lem::lemma152}, $|S'_1|\le |S'| \le \eps_3 n$. For every $s\le \eps_3n$, we have $(s, \eps_2pn^2/2, 3\eps_3pn) \in O$ (see Claim~\ref{cla::162} in the Appendix for the proof). Then by Lemma~\ref{lem::lemma12} and the definition of $S'_1$, we get that $|T'_x| \ge \frac{1}{2} \eps_2 pn^2$. Now we count those copies of $\hff = \{x,w,y,z\}$ where $x \in S'_1$, $w\in N^J(x)$, and $\{y,z\} \in T'_x$. By Lemma~\ref{lem::lemma9}, the number of such copies of $\hff$ is at least $\frac{1}{4}\eps_1\eps_2p^2 n^3 |S'_1|$. Every $wyz \in \bar{H}_\pi$ can be in at most $3\eps_3pn$ such copies of $\hff$, because $x$ is assumed to be in $S'_1$ and $d_{S'_1}(y,z) \le 3 \eps_3 pn$. Therefore,
\bdm
|\bar{H}_\pi| \ge \frac{\frac{1}{4}\eps_1\eps_2p^2 n^3  |S'_1|}{3\eps_3pn} \ge 20 pn^2|S'_1|. \qedhere
\edm
\end{proof}

\begin{lemma} \label{lem::lemma182}
With high probability $|\bar{H}_\pi| \ge \frac{1}{20} p n^2 |S'_2|$.
\end{lemma}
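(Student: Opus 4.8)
The plan is to run the argument of Lemma~\ref{lem::lemma181} essentially verbatim, with $S'_2$ in place of $S_2$. The point is that that proof uses neither the bound on $|\bar H_\pi|$ nor the definition of $S$; it uses only the defining property of the ``small $H$-degree'' vertices together with the fact that every vertex of $V_1$ has large crossing degree in $G$. Both of these carry over to $S'_2$ without change.

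First I would fix a vertex $x \in S'_2 = S' \setminus S'_1$. By the definition of $S'_1$ we have $d^H_{2,3}(x) < \eps_2 p n^2$. Since $\pi$ is balanced, $|V_2|, |V_3| \ge (1-10^{-10})n/3 \ge n/20$, so Lemma~\ref{lem::degCro}, applied with a sufficiently small fixed $\eps$ (say $\eps = 1/100$), holds w.h.p.\ and gives $d_{2,3}(x) = (1 \pm \eps) p |V_2||V_3| \ge p n^2/10$, the last inequality again using that $\pi$ is balanced. Consequently the number of triples $xyz$ with $y \in V_2$, $z \in V_3$ and $xyz \in G_\pi \setminus H$ is $d_{2,3}(x) - d^H_{2,3}(x) \ge p n^2/10 - \eps_2 p n^2 \ge p n^2/20$, where the final step uses $\tfrac{1}{10} - \eps_2 \ge \tfrac{1}{20}$, one of the standing inequalities on the constants fixed at the start of Section~\ref{sec::Pro}.

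Each such triple lies in $\bar H_\pi$ and, being a crossing triple, has exactly one vertex in $V_1$, namely $x$; hence the collections of triples obtained from distinct $x \in S'_2$ are pairwise disjoint. Summing over $x \in S'_2$ yields $|\bar H_\pi| \ge |S'_2| \cdot p n^2/20$, as required. I do not expect any genuine obstacle here: the only points that need checking are that the balanced hypothesis both licenses the application of Lemma~\ref{lem::degCro} and pushes $(1-\eps)|V_2||V_3|$ past $n^2/10$, and that the chosen value of $\eps_2$ satisfies $\tfrac{1}{10} - \eps_2 \ge \tfrac{1}{20}$ — both already arranged in the choice of constants.
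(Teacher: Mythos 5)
Your argument is correct and matches the paper exactly: the paper's proof of Lemma~\ref{lem::lemma182} simply defers to the proof of Lemma~\ref{lem::lemma181}, which is precisely the computation you carry out (using $d^H_{2,3}(x) < \eps_2 p n^2$ and Lemma~\ref{lem::degCro} to get $pn^2/20$ missing crossing hyperedges through each $x \in S'_2$, then summing disjointly). You have merely supplied the details that the paper leaves implicit.
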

\begin{proof}
Exactly the same as the proof of Lemma~\ref{lem::lemma181}.
\end{proof}

We are now able to conclude the proof of Proposition~\ref{prop::lemma13} for the remaining case that $\frac{\delta p n^3}{\lnn} < |\bar{H}_\pi| \le \delta p n^3.$

\begin{proof} [Proof of Proposition~\ref{prop::lemma13}]

Similarly to the proof in Section~\ref{subsec::bHsmall}, we define
$H'_1(1) = \{e \in H_1 : |e \cap S'| \ge 2 \textrm{ or } |e \cap (V_1\sm S')| \ge 2\}$,
$H'_1(2) = \{e \in H_1\sm H'_1(1) : |e \cap S'_1| =1\}$
and $H'_1(3) = H'_1 \sm (H'_1(1) \cup H'_1(2))$.

We still split the proof into three cases. The cases $3|H'_1(2)| \ge |H_1|$ and $3|H'_1(3)| \ge |H_1|$ follow with the same proof as in Section~\ref{subsec::bHsmall}. For the case $3|H'_1(1)| \ge |H_1|$, now let $J' = J[S'] \cup J[V_1 \sm S']$. By Lemmas~\ref{lem::lemma5p} and~\ref{lem::lemma5pn},
\begin{equation}
|H'_1(1)| \le |J'|\cdot 3pn + n^2 e^{- \slnn} \cdot pn\frac{\slnn}{\lnlnn}. \label{equ::132}
\end{equation}
Recall we have the assumptions that $3|H_1| \ge |\bar{H}_\pi|$, so $3|H'_1(1)| \ge|H_1| \ge \delta p n^3 / (3\lnn)$. Hence by (\ref{equ::132}), we have that $0.99|H'_1(1)| \le |J'|\cdot 3pn$. Then, by Lemma~\ref{lem::lemma152} and the definition of $S'$, we get that $\Delta(J') \le \eps_1n$. Finally, by Lemma~\ref{lem::lemma17}, $|\bar{H}_\pi| \ge 30pn |J'| \ge 9.9|H'_1(1)| \ge 3.3 |H_1|$.

Thus, similarly as in Section~\ref{subsec::bHsmall}, we get $|\bar{H}_\pi| \ge 3.3 |H_1| \ge 3|H_1|$, where the equality is possible only if $H$ is tripartite.
\end{proof}

{\bf Acknowledgement:} We thank the referees for their useful comments and careful reading of the manuscript.

\bibliography{reference.bib}

\section*{Appendix: Final computations}

Recall that $g(p,s,r,i) = n\b{n}{s} \b{n^2}{r} \left(p \b{s}{i}p^{i}\right)^{r}$ and $O$ is the set of $(s,r,i)$ such that $g(p,s,r,i)= o(n^{-5})$ given $p > C\slnn/n$. In this appendix, we give the proof for the claims made in the proof of Theorem~\ref{thm::main} that certain $(s,r,i)$ is in $O$.

\begin{claim} \label{cla::171}
For every positive integer $s \le \frac{\eps_1n}{\slnn}$, $r = \frac{pn}{\slnn}s$, and $i =  \frac{pn \lnlnn}{500 \slnn}$, we have $(s,r,i) \in O$.
\end{claim}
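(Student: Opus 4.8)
The plan is to bound $g(p,s,r,i)$ directly by taking logarithms. If $s<i$ then $\b{s}{i}=0$, so $g=0$ and there is nothing to prove; hence assume $s\ge i$. Since also $s\le\eps_1 n/\slnn$, this forces $i\le\eps_1 n/\slnn$, which (as $\eps_1=1/3000$) rearranges to $p\le 1/(6\lnlnn)$, so we may assume $C\slnn/n<p\le 1/(6\lnlnn)$. Bounding each binomial coefficient by $\b{N}{k}\le(eN/k)^k$ and substituting $r=\tfrac{pn}{\slnn}s$, $i=\tfrac{pn\lnlnn}{500\slnn}$, two factors of $p$ cancel: $(en^2/r)^rp^r=(en\slnn/s)^r$ and $(es/i)^ip^i=(500es\slnn/(n\lnlnn))^i$. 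Writing $\alpha:=pn/\slnn$ (so $\alpha>C$, $i=\tfrac{\alpha\lnlnn}{500}$, $ir=\tfrac{\alpha^2\lnlnn}{500}s$), this yields
\[
\ln g\le \lnn+s\ln\tfrac{en}{s}+r\ln\tfrac{en\slnn}{s}+ir\ln\tfrac{500es\slnn}{n\lnlnn}=\lnn+s\,\Phi(s),\qquad \Phi(s):=\ln\tfrac{en}{s}+\alpha\ln\tfrac{en\slnn}{s}+\tfrac{\alpha^2\lnlnn}{500}\ln\tfrac{500es\slnn}{n\lnlnn}.
\]
Since $s\le\eps_1 n/\slnn$ we have $\tfrac{500es\slnn}{n\lnlnn}\le\tfrac{e}{6\lnlnn}<1$ for large $n$, so the last term of $\Phi$ is negative, and this negativity is the entire source of the gain over a crude union bound.

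The core estimate is $\Phi(s)\le-\tfrac{\alpha^2\lnlnn}{1000}M$ for all $1\le s\le\eps_1 n/\slnn$, where $M:=\ln\tfrac{n\lnlnn}{500es\slnn}>0$. Setting $L:=\ln\tfrac{en\slnn}{s}$, one has $\ln\tfrac{en}{s}=L-\tfrac12\lnlnn$ and $L-M=\ln\tfrac{500e^2\lnn}{\lnlnn}$ (independent of $s$), hence $\Phi(s)=(1+\alpha)L-\tfrac{\alpha^2\lnlnn}{500}M-\tfrac12\lnlnn$. The bound $s\le\eps_1 n/\slnn$ forces $M\ge\ln\tfrac{6\lnlnn}{e}\ge\ln\lnlnn$, while $L-M=\ln\tfrac{500e^2\lnn}{\lnlnn}\le 2\lnlnn$ for large $n$; thus $L/M=1+(L-M)/M\le 1+\tfrac{2\lnlnn}{\ln\lnlnn}$, and since $\alpha>C$ is large this gives $(1+\alpha)L\le\tfrac{\alpha^2\lnlnn}{1000}M$, so $\Phi(s)\le-\tfrac{\alpha^2\lnlnn}{1000}M-\tfrac12\lnlnn\le-\tfrac{\alpha^2\lnlnn}{1000}M$.

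Therefore $\ln g\le\lnn-\tfrac{\alpha^2\lnlnn}{1000}\,sM$, and $sM=s\ln\tfrac{A}{s}$ with $A=\tfrac{n\lnlnn}{500e\slnn}$. This function is increasing on $(0,A/e)\supseteq(0,\eps_1 n/\slnn]$ (as $\eps_1 n/\slnn<A/e$ for large $n$), so on $[i,\eps_1 n/\slnn]$ it is minimized at $s=i$, where $A/i=1/(ep)$ and the value is $i\ln\tfrac1{ep}=\tfrac{\alpha\lnlnn}{500}\ln\tfrac1{ep}$. Hence $\ln g\le\lnn-\tfrac{\alpha^3(\lnlnn)^2}{500000}\ln\tfrac1{ep}$. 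Finally, $\tfrac{d}{dp}\big[\tfrac{pn}{\slnn}(-1-\ln p)\big]=\tfrac{n}{\slnn}(-2-\ln p)>0$ on $(0,1/(6\lnlnn)]\subseteq(0,e^{-2})$, so $\alpha\ln\tfrac1{ep}=\tfrac{pn}{\slnn}(-1-\ln p)\ge C\big(\lnn-\tfrac12\lnlnn-\ln C-1\big)\ge\tfrac{C\lnn}{2}$ for large $n$; thus $\alpha^3\ln\tfrac1{ep}\ge\tfrac{C^3\lnn}{2}$ and $\ln g\le\lnn-\tfrac{C^3(\lnlnn)^2\lnn}{10^6}\le-6\lnn$ once $C$ is large enough, i.e.\ $g\le n^{-6}=o(n^{-5})$ and $(s,r,i)\in O$.

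The step I expect to be the main obstacle is the uniformity over $s$ and $p$ in the two middle paragraphs: the negative part of $\Phi$ has order $\alpha^2\lnlnn\cdot\ln\tfrac{n\lnlnn}{500es\slnn}$ while the positive part has order $\alpha\ln\tfrac{en\slnn}{s}$, and these look comparable precisely when $\alpha=pn/\slnn$ is near its minimum $C$. The resolution --- and the place where the $\lnlnn$ factor in the definition of $i$ is genuinely used --- is that in that regime $\ln\tfrac{n\lnlnn}{500es\slnn}$ already has order $\lnn$ at its smallest value $s=i$, which together with the freedom to take $C$ as large as needed closes the gap.
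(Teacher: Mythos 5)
Your proof is correct, and it takes a genuinely different route from the paper's. The paper's argument splits the range of $s$ into three intervals ($1\le s\le\sqrt{n}$, $\sqrt{n}\le s\le n/\lnn$, and $n/\lnn\le s\le\eps_1 n/\slnn$) and in each interval applies a tailored coarse estimate to the same starting inequality
\[
\ln g\le\lnn+s\ln\tfrac{en}{s}+r\ln\tfrac{en\slnn}{s}+ir\ln\tfrac{500es\slnn}{n\lnlnn},
\]
whereas you keep this exact inequality and analyze it uniformly: you rewrite the bracket as $\Phi(s)=(1+\alpha)L-\tfrac12\lnlnn-\tfrac{\alpha^2\lnlnn}{500}M$ (with $L-M$ independent of $s$), show the $\alpha^2\lnlnn$ term dominates once $\alpha\ge C$ is large and $\lnlnn$ is large, and then note that $sM=s\ln(A/s)$ is increasing on the relevant range so the minimum is at $s=i$, after which a monotonicity-in-$p$ argument finishes the job. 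This buys you a single clean computation with no case split; the paper's case analysis is more pedestrian but avoids having to track the interplay between the $s$-dependence of $L$ and $M$ and the $p$-dependence of $\alpha$. Both proofs ultimately hinge on the same mechanism: the exponent $ir$ carries an $\alpha^2$ that beats the $\alpha$ in the exponent $r$, and the $\lnlnn$ in the definition of $i$ is what makes $M$ stay positive (and bounded below by roughly $\ln\lnlnn$) up to $s=\eps_1 n/\slnn$; your last paragraph correctly identifies this as the crux. One tiny point worth flagging for completeness: when you combine $\alpha\ge C$ with $\alpha\ln\tfrac1{ep}\ge\tfrac{C\lnn}{2}$ to conclude $\alpha^3\ln\tfrac1{ep}\ge\tfrac{C^3\lnn}{2}$, you are multiplying two lower bounds that each hold pointwise for every $p$ in the range, so the step is valid, but it reads slightly as though you were treating them as independent; a reader would appreciate a half-sentence making this explicit.
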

\begin{proof}
We have
\begin{align}
g(p,s,r,i)
&= n \b{n}{s} \b{n^2}{\frac{pn}{\slnn}s} \left( p \b{s}{\frac{pn \lnlnn}{500 \slnn}} p^{\frac{pn \lnlnn}{500 \slnn}} \right)^{\frac{pn}{\slnn}s} \notag \\
&\le n \left( \frac{en}{s}\right)^s \left( \frac{epn^2 \slnn}{ pns} \left( \frac{500eps\slnn}{pn \lnlnn} \right)^{\frac{pn \lnlnn}{500\slnn}} \right)^{\frac{pn}{\slnn}s} \notag \end{align}\begin{align}
& \le n \left( \frac{en}{s}\right)^s \left( \frac{en \slnn}{ s} \left( \frac{500es\slnn}{n \lnlnn} \right)^{\frac{pn \lnlnn}{500\slnn}} \right)^{\frac{pn}{\slnn}s}. \label{g}
\end{align}
There are three cases depending on $s$.

    \textbf{Case 1:} $1\le s \le \sqrt{n}$.
    In this case, (\ref{g}) is at most
    \bdm
    n (en)^s \left( en \slnn \left( \frac{\slnn}{\sqrt{n}} \right)^{\frac{C \lnlnn}{500}} \right)^{Cs}
    \le n (en)^s \left(\frac{1}{n}\right)^{Cs} = o(n^{-5}).
    \edm

    \textbf{Case 2:} $\sqrt{n} < s < \frac{n}{\lnn}$.
    In this case, we have $\frac{n}{s} \ge \lnn > \left(\frac{500e\slnn}{\lnlnn}\right)^2$ and then
    $$
    \left( \frac{500es\slnn}{n \lnlnn} \right)^{\frac{pn \lnlnn}{500\slnn}} \le
    \left( \frac{s}{n} \cdot \frac{500e \sqrt{\ln n}}{\ln \ln n} \right)^{\ln \ln n} \le  \left( \frac{s}{n} \right)^{\frac{1}{2}\lnlnn}  \le \left(\frac{s}{n}\right)^{100}.
    $$
    
    Therefore, (\ref{g}) is at most
    \begin{align*}
    n \left( \frac{en}{s}\right)^s &\left( \frac{en \slnn}{ s} \left(\frac{s}{n}\right)^{100} \right)^{\frac{pn}{\slnn}s}
    \le n \left( \frac{en}{s}\right)^s \left( \frac{s}{n}\right)^{\frac{pn}{\slnn}s}
    \\
    &\le n \left( \frac{en}{s}\right)^s \left( \frac{s}{n}\right)^{Cs}
    \le n \left( \frac{s}{n}\right)^{s}
    \le n \left( \frac{1}{\lnn}\right)^{\sqrt{n}}
    = o(n^{-5}).
    \end{align*}

    \textbf{Case 3:} $\frac{n}{\lnn} \le s \le \frac{\eps_1 n}{\slnn}$.
    In this case, (\ref{g}) is at most
    \begin{align*}
        n(e \lnn)^s  \left(e \ln^{3/2} n \left( \frac{500e \eps_1}{ \lnlnn} \right)^{\frac{C\lnlnn}{500}} \right)^{\frac{pn}{\slnn}s}
    \le & n(e \lnn)^s \left( \left(\frac{1}{\lnlnn}\right)^{\lnlnn} \right)^{Cs}\\
    \le n \left(\frac{1}{\lnn}\right)^{s} & \le  n \left(\frac{1}{\lnn}\right)^{\frac{n}{\lnn}} = o(n^{-5}). \qedhere
    \end{align*}

\end{proof}

\begin{claim} \label{cla::172}
For every positive integer $s \in \big( \frac{\eps_1n}{\slnn}, \,\eps_1n \big]$, $r = \frac{pn}{500}s$, and $i = 3\eps_1 pn$, we have $(s,r,i) \in O$.
\end{claim}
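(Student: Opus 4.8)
The plan is to follow the proof of Claim~\ref{cla::171}, but the argument is shorter here because the hypothesis $s \le \eps_1 n$ makes the base of the repeated exponentiation a constant $\le e/3 < 1$, so no case analysis in $s$ is needed. If $i = 3\eps_1 pn > s$ then $\b{s}{i} = 0$, hence $g(p,s,r,i) = 0 = o(n^{-5})$, so assume $i \le s$. Applying $\b{n}{s} \le (en/s)^s$, then $\b{n^2}{r} \le (en^2/r)^r$ with $r = \frac{pn}{500}s$, and $\b{s}{i}p^i \le (esp/i)^i = \left(\frac{es}{3\eps_1 n}\right)^{3\eps_1 pn}$, the factor $p$ in $p\b{s}{i}p^i$ cancels the $1/p$ produced by $\b{n^2}{r}$ (exactly as in Claim~\ref{cla::171}), giving
\[
g(p,s,r,i) \le n\left(\frac{en}{s}\right)^s\left(\frac{500en}{s}\left(\frac{es}{3\eps_1 n}\right)^{3\eps_1 pn}\right)^{\frac{pn}{500}s}.
\]

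Next I would exploit $s \in \big(\frac{\eps_1 n}{\slnn},\,\eps_1 n\big]$. The lower bound gives $\frac{500en}{s} \le \frac{500e\slnn}{\eps_1}$, and the upper bound gives $\frac{es}{3\eps_1 n} \le \frac{e}{3}$, hence $\left(\frac{es}{3\eps_1 n}\right)^{3\eps_1 pn} \le \exp\!\big(-3\eps_1(\ln 3 - 1)pn\big)$. Since $pn > C\slnn$ while $\ln\frac{500e\slnn}{\eps_1} = O(\lnlnn)$, for $n$ large the quantity inside the outer power is at most $\exp(-c_0 pn)$ with $c_0 \ce \eps_1(\ln 3 - 1) > 0$. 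Raising to the power $\frac{pn}{500}s$ yields $g(p,s,r,i) \le n\left(\frac{en}{s}\right)^s\exp\!\big(-\tfrac{c_0}{500}p^2n^2 s\big)$.

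Finally I would absorb the front factor: $s > \frac{\eps_1 n}{\slnn}$ gives $\ln\frac{en}{s} \le \ln\frac{e\slnn}{\eps_1} \le \slnn$ for $n$ large, so $\left(\frac{en}{s}\right)^s \le e^{s\slnn}$, whereas $p^2n^2 \ge C^2\lnn$ makes $\frac{c_0}{500}p^2n^2 s \ge \frac{c_0 C^2}{500}s\lnn$ swamp $s\slnn$. Hence $\ln g(p,s,r,i) \le \lnn + s\slnn - \frac{c_0 C^2}{500}s\lnn \le \lnn - \eps_1 n$ for $n$ large (using $s > \frac{\eps_1 n}{\slnn}$ once more), which is $o(n^{-5})$; in fact it decays faster than any power of $n$.

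I do not expect a genuine obstacle. The only points requiring care are the degenerate case $i > s$ and checking that the $O(\lnlnn)$-sized logarithmic corrections are dominated by the $\Theta(pn)$ and $\Theta(p^2n^2)$ contributions, which holds for all large $n$ and is only helped by enlarging $C$. This computation is strictly easier than that of Claim~\ref{cla::171}, where the relevant base could be close to $1$ and three regimes of $s$ had to be treated separately.
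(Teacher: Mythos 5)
Your proposal is correct and follows essentially the same route as the paper: both apply $\binom{n}{s}\le(en/s)^s$, $\binom{n^2}{r}\le(en^2/r)^r$ and $\binom{s}{i}p^i\le(esp/i)^i$, cancel the loose $p$, then exploit the lower bound $s>\eps_1 n/\slnn$ to control $en/s$ and the upper bound $s\le\eps_1 n$ together with $pn>C\slnn$ to make $(e/3)^{3\eps_1 pn}$ dominate everything else. The only (harmless) differences are that you note the degenerate case $i>s$ explicitly and you carry the exponent in the form $\exp(-c_0 pn)$ rather than immediately substituting $pn\ge C\slnn$ as the paper does.
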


\begin{proof}
We have that $g(p,s,r,i)$ is
\begin{align*}
& n \b{n}{s} \b{n^2}{\frac{pn}{500}s} \left( p \b{s}{3\eps_1 pn} p^{3\eps_1pn} \right)^{\frac{pn}{500}s}
\le n \left( \frac{en}{s}\right)^s \left( \frac{500pn^2}{ pns} \left( \frac{eps}{3\eps_1pn} \right)^{3\eps_1pn} \right)^{\frac{pn}{500}s}\\
&= n \left( \frac{en}{s}\right)^s \left( \frac{500n}{s} \left( \frac{es}{3\eps_1n} \right)^{3\eps_1pn} \right)^{\frac{pn}{500}s}
\le n \left( \frac{e\slnn}{\eps_1}\right)^s \left( \frac{500\slnn}{\eps_1} \left( \frac{e}{3} \right)^{3\eps_1C\slnn} \right)^{Cs\slnn}\\
&\le n \left( \frac{e\slnn}{\eps_1}\right)^s \left( 2^{-\slnn} \right)^{Cs\slnn} \le n2^{-s} \le n2^{-\frac{\eps_1n}{\slnn}} = o(n^{-5}).\qedhere
\end{align*}
\end{proof}

\begin{claim} \label{cla::pn}
For every positive integer $s \le n$, we have $\left(s, \frac{pn^2}{\lnn}, 3pn\right) \in O$.
\end{claim}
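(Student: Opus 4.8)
The plan is to estimate $g\big(p,s,\tfrac{pn^2}{\lnn},3pn\big)$ directly from its definition, using only crude binomial bounds but keeping careful track of one cancellation. Write $r=\tfrac{pn^2}{\lnn}$ and $i=3pn$. If $i>s$ then $\b{s}{i}=0$, so $g=0$ and $(s,r,i)\in O$ trivially; hence assume $3pn\le s\le n$. I would start from $\b{n}{s}\le 2^n$, $\b{n^2}{r}\le(en^2/r)^r$, and the single-term tail estimate
\[
\b{s}{3pn}p^{3pn}\le\Big(\tfrac{es}{3pn}\Big)^{3pn}p^{3pn}=\Big(\tfrac{es}{3n}\Big)^{3pn}\le\Big(\tfrac e3\Big)^{3pn},
\]
where the last step uses $s\le n$. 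Substituting $r=\tfrac{pn^2}{\lnn}$ gives $\tfrac{en^2}{r}=\tfrac{e\lnn}{p}$, and the point is that this factor coming from $\b{n^2}{r}$ exactly cancels the additional $p$ in the term $p\,\b{s}{3pn}p^{3pn}$:
\[
g\Big(p,s,\tfrac{pn^2}{\lnn},3pn\Big)\le n\cdot 2^n\cdot\Big(\tfrac{e\lnn}{p}\cdot p\,(e/3)^{3pn}\Big)^{pn^2/\lnn}=n\cdot 2^n\cdot\big(e\lnn\,(e/3)^{3pn}\big)^{pn^2/\lnn}.
\]

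Next I would bound the base $e\lnn\,(e/3)^{3pn}$. Since $p>C\slnn/n$ we have $pn>C\slnn$, so $(e/3)^{3pn}=\exp\!\big(-3(\ln 3-1)\,pn\big)\le\exp\!\big(-3(\ln 3-1)C\slnn\big)$ (the base is largest at $p=C\slnn/n$), and for $C$ large enough, using $1+\lnlnn\le\slnn$ for large $n$, this forces $e\lnn\,(e/3)^{3pn}\le e^{-\slnn}$. Therefore
\[
g\Big(p,s,\tfrac{pn^2}{\lnn},3pn\Big)\le n\cdot 2^n\cdot e^{-pn^2/\slnn}\le n\cdot 2^n\cdot e^{-Cn}=n\exp\!\big((\ln 2-C)n\big)=o(n^{-5}),
\]
since $pn^2/\slnn>C\slnn\cdot n/\slnn=Cn$ and $C>\ln 2$.

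There is no serious obstacle here; the computation is of the same flavour as Claims~\ref{cla::171} and~\ref{cla::172}. The one thing that genuinely matters --- and the reason $r$ is taken to be $pn^2/\lnn$ rather than something smaller, as the earlier remark explains --- is the cancellation $\tfrac{en^2}{r}\cdot p=e\lnn$: it keeps the base of the $r$-th power polylogarithmic (indeed below $e^{-\slnn}$), so that the $r$-th power is small enough to beat the factor $2^n\ge\b{n}{s}$. A looser estimate such as $\b{n^2}{r}\le n^{2r}$ would instead leave a base of size $n^2(e/3)^{3pn}=\exp\!\big(2\lnn-3(\ln 3-1)C\slnn\big)$, which tends to $\infty$ near the threshold $p=C\slnn/n$ because $\lnn$ dominates $\slnn$, and the bound would collapse.
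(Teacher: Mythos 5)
Your proof is correct and follows essentially the same route as the paper's: both bound $\b{n}{s}\le 2^n$, observe the cancellation $\tfrac{en^2}{r}\cdot p = e\lnn$ coming from $\b{n^2}{r}\le(en^2/r)^r$, reduce the base of the $r$-th power to $e\lnn\,(e/3)^{3pn}$, and then use $pn\ge C\slnn$ to beat the $2^n$. The only (harmless) differences are cosmetic: you explicitly dispose of the trivial case $3pn>s$, and you bound the base by $e^{-\slnn}$ before substituting $pn^2/\slnn\ge Cn$, whereas the paper substitutes $pn\ge C\slnn$ in both the base and the exponent and simplifies to $(e/3)^{Cn}$.
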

\begin{proof}
We have that $g(p,s,r,i)$ is
\begin{align*}
&n \b{n}{s} \b{n^2}{\frac{pn^2}{\lnn}} \left(p\b{s}{3pn}p^{3pn} \right)^{\frac{pn^2}{\lnn}}
\le n2^n \b{n^2}{\frac{pn^2}{\lnn}} \left( p\b{n}{3pn} p^{3pn}\right)^{\frac{pn^2}{\lnn}}  \\
&\le n2^n  \left( \frac{epn^2\lnn}{pn^2} \left(\frac{enp}{3pn}\right)^{3pn} \right)^{\frac{pn^2}{\lnn}}
\le n2^n\left(e \lnn \left(\frac{e}{3}\right)^{3pn} \right)^{\frac{pn^2}{\lnn}}\\
&
\le n2^n\left(e \lnn \left(\frac{e}{3}\right)^{3C\slnn} \right)^{\frac{Cn}{\slnn}}
\le n2^n\left(\left(\frac{e}{3}\right)^{\slnn} \right)^{\frac{Cn}{\slnn}}
=   n2^n \left(\frac{e}{3}\right)^{Cn} = o(n^{-5}). \qedhere
\end{align*}
\end{proof}

\begin{claim} \label{cla::161}
For every positive integer $s\le \frac{\eps_3n}{\slnn} $, we have $\left( s, \frac{\eps_2 pn^2}{2}, \frac{3\eps_3 pn}{\slnn} \right) \in O$.
\end{claim}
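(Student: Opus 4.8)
The plan is to expand $g(p,s,r,i)$ exactly as in Claims~\ref{cla::171}--\ref{cla::pn}, using the crude bounds $\binom{n}{s}\le 2^{n}$, $\binom{m}{k}\le(em/k)^{k}$, and substituting $r=\eps_2pn^2/2$, $i=3\eps_3pn/\slnn$. If $s<i$ then $\binom{s}{i}=0$, hence $g(p,s,r,i)=0$ and there is nothing to prove, so I may assume $i\le s\le\eps_3n/\slnn$. The only place the hypothesis $s\le\eps_3n/\slnn$ is used is the estimate
\begin{displaymath}
	p\binom{s}{i}p^{i}\le p\left(\frac{eps}{i}\right)^{i}=p\left(\frac{es\slnn}{3\eps_3n}\right)^{3\eps_3pn/\slnn}\le p\left(\frac{e}{3}\right)^{3\eps_3pn/\slnn},
\end{displaymath}
which holds because $s\slnn\le\eps_3n$ forces $es\slnn/(3\eps_3n)\le e/3<1$.

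Next I would raise this to the power $r=\eps_2pn^2/2$ and multiply by $\binom{n^2}{r}\le(2e/(\eps_2p))^{r}$; the factors $p^{r}$ and $p^{-r}$ cancel, and what remains is
\begin{displaymath}
	g(p,s,r,i)\le n\,2^{n}\left(\frac{2e}{\eps_2}\right)^{\eps_2pn^2/2}\left(\frac{e}{3}\right)^{3\eps_3\eps_2p^2n^3/(2\slnn)},
\end{displaymath}
a bound which, conveniently, no longer depends on $s$. Taking logarithms, the exponent equals $\ln n+n\ln 2+\frac{\eps_2}{2}\ln(2e/\eps_2)\,pn^2-c\,p^2n^3/\slnn$ with $c=\frac{3\eps_3\eps_2}{2}\ln(3/e)>0$ an absolute constant. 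The decisive point is that $p>C\slnn/n$ gives $pn/\slnn>C$, hence $p^2n^3/\slnn=(pn/\slnn)\cdot pn^2\ge C\,pn^2$, so once $C$ is large enough the $-c\,p^2n^3/\slnn$ term absorbs the $O(pn^2)$ contribution and what is left is at most $\exp(\ln n+n\ln2-pn^2)$. Since $pn^2\ge Cn\slnn$, this tends to $0$ faster than any fixed power of $n^{-1}$, in particular it is $o(n^{-5})$.

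I do not expect a real obstacle here: the argument is strictly shorter than that of Claim~\ref{cla::171}, since $r$ and $i$ here are proportional to $pn^2$ and $pn/\slnn$ with only constant factors, so no case analysis on $s$ is needed, and after cancelling the powers of $p$ everything reduces to the inequality $p^2n^3/\slnn\ge C\,pn^2$. The only mild care required is to verify that the constant $c$ in front of $p^2n^3/\slnn$ genuinely dominates the constant $\frac{\eps_2}{2}\ln(2e/\eps_2)$ coming from $\binom{n^2}{r}$; this is arranged by taking $C$ sufficiently large, which is legitimate since $C$ is the global constant fixed at the beginning of Section~\ref{sec::Pre}.
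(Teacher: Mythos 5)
Your proof is correct and follows essentially the same route as the paper: expand $g(p,s,r,i)$, use $\binom{m}{k}\le(em/k)^k$, observe that $s\le\eps_3n/\slnn$ makes $eps/i\le e/3<1$, and let the $-cp^2n^3/\slnn$ term in the exponent absorb everything else. The only cosmetic difference is that you bound $\binom{n}{s}\le 2^n$ rather than $\binom{n}{\eps_3 n/\slnn}$ as the paper does, which is cruder but harmless since $pn^2\ge Cn\slnn\gg n$.
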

\begin{proof}
We have that $g(p,s,r,i)$ is
\begin{align*}
    & n\b{n}{s} \b{n^2}{\frac{1}{2}\eps_2pn^2} \left( p \b{s}{\frac{3\eps_3pn}{\slnn}}p^{\frac{3\eps_3pn}{\slnn}} \right)^{\frac{1}{2}\eps_2pn^2}
    \le n\b{n}{\frac{\eps_3n}{\slnn}} \b{n^2}{\frac{1}{2}\eps_2pn^2} \left( p \b{\frac{\eps_3n}{\slnn}}{\frac{3\eps_3pn}{\slnn}}p^{\frac{3\eps_3pn}{\slnn}} \right)^{\frac{1}{2}\eps_2pn^2} \\
    &\le n \left(\frac{en\slnn}{\eps_3n}\right)^{\frac{\eps_3n}{\slnn}} \left( \frac{2epn^2}{\eps_2pn^2} \left(\frac{e\eps_3pn}{3\eps_3pn}\right)^{\frac{3\eps_3pn}{\slnn}}\right)^{\frac{1}{2}\eps_2pn^2} \\
    &= n \left(\frac{e\slnn}{\eps_3}\right)^{\frac{\eps_3n}{\slnn}} \left( \frac{2e}{\eps_2} \left(\frac{e}{3}\right)^{\frac{3\eps_3pn}{\slnn}}\right)^{\frac{1}{2}\eps_2pn^2} \\
    &\le n \left( \frac{e\slnn}{\eps_3}\right)^{\frac{\eps_3 n}{\slnn}} 2^{-\frac{1}{2}\eps_2pn^2} \le n \left( \frac{e\slnn}{\eps_3}\right)^{\frac{\eps_3 n}{\slnn}} 2^{-\frac{1}{2}\eps_2 Cn\slnn} =o(n^{-5}).\qedhere
\end{align*}
\end{proof}

\begin{claim} \label{cla::162}
For every positive integer $s\le \eps_3n$, we have $\left( s, \frac{\eps_2pn^2}{2}, 3\eps_3pn \right) \in O$.
\end{claim}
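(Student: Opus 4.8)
The plan is to mimic the template of the preceding computations (in particular Claim~\ref{cla::161}) and reduce the whole statement to one elementary estimate. First I would recall that $g(p,s,r,i) = n\b{n}{s}\b{n^2}{r}\left(p\b{s}{i}p^{i}\right)^{r}$ and substitute $r = \eps_2 pn^2/2$ and $i = 3\eps_3 pn$. Since $\eps_3 = 10^{-10}$, we have $s \le \eps_3 n < n/2$, so both $n\b{n}{s}$ and the factor $\left(p\b{s}{3\eps_3 pn}p^{3\eps_3 pn}\right)^{\eps_2 pn^2/2}$ are nondecreasing in $s$ while $\b{n^2}{r}$ does not depend on $s$; hence it suffices to bound $g$ at $s=\eps_3 n$. (We may also assume $p \le 1/3$, as otherwise $\b{\eps_3 n}{3\eps_3 pn}=0$ and $g=0$ trivially.)

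Next I would apply $\b{m}{k}\le (em/k)^k$ to each of the three binomials: $\b{n}{\eps_3 n}\le (e/\eps_3)^{\eps_3 n}$, $\b{n^2}{\eps_2 pn^2/2}\le \left(2e/(\eps_2 p)\right)^{\eps_2 pn^2/2}$, and $\b{\eps_3 n}{3\eps_3 pn}\le \left(e/(3p)\right)^{3\eps_3 pn}$. The point is a double cancellation of the $p$'s: the factor $p^{3\eps_3 pn}$ exactly kills the $p$-dependence of $\b{\eps_3 n}{3\eps_3 pn}$, leaving $p\left(e/3\right)^{3\eps_3 pn}$ inside the outer $(\cdot)^{\eps_2 pn^2/2}$, and then this leftover $p$ cancels against the $1/p$ coming from $\b{n^2}{\eps_2 pn^2/2}$. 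After these cancellations the expression is at most
\[
n\left(\frac{e}{\eps_3}\right)^{\eps_3 n}\left(\frac{2e}{\eps_2}\left(\frac{e}{3}\right)^{3\eps_3 pn}\right)^{\eps_2 pn^2/2}.
\]

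Finally, since $p>C\slnn/n$ gives $pn>C\slnn\to\infty$ and $e/3<1$, for $C$ large enough the base $\frac{2e}{\eps_2}\left(\frac{e}{3}\right)^{3\eps_3 pn}$ is at most $1/2$, so the second factor is at most $2^{-\eps_2 pn^2/2}\le 2^{-\eps_2 Cn\slnn/2}$. The surviving factor $n\left(e/\eps_3\right)^{\eps_3 n}=\exp(O(n))$ is overwhelmed by $2^{-\Omega(n\slnn)}$, so $g(p,s,r,i)=o(n^{-5})$, i.e. $\left(s,\eps_2 pn^2/2,3\eps_3 pn\right)\in O$. There is no genuine obstacle: the argument is essentially the same as for Claim~\ref{cla::161}, the only care needed being the two $p$-cancellations and choosing $C$ large enough (depending on $\eps_2,\eps_3$) to make the base of the dominant factor drop below $1$ and to absorb the $\left(e/\eps_3\right)^{\eps_3 n}$ term.
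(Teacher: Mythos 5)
Your proposal is correct and follows essentially the same line as the paper's proof: substitute $r$ and $i$, bound $s$ by $\eps_3 n$, apply $\binom{m}{k}\le(em/k)^k$ to obtain $n(e/\eps_3)^{\eps_3 n}\bigl(\tfrac{2e}{\eps_2}(e/3)^{3\eps_3 pn}\bigr)^{\eps_2 pn^2/2}$, and use $pn>C\slnn$ to make the inner base $\le 1/2$ so that the $2^{-\Omega(n\slnn)}$ decay overwhelms $\exp(O(n))$. The only (harmless) addition is your explicit monotonicity-in-$s$ justification and the $p\le 1/3$ aside, which the paper leaves implicit.
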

\begin{proof}
We have that $g(p,s,r,i)$ is
\begin{align*}
    &n\b{n}{s} \b{n^2}{\frac{1}{2}\eps_2pn^2} \left( p \b{s}{3\eps_3pn}p^{3\eps_3pn} \right)^{\frac{1}{2}\eps_2pn^2}
    \le n\b{n}{\eps_3n} \b{n^2}{\frac{1}{2}\eps_2pn^2} \left( p \b{\eps_3n}{3\eps_3pn}p^{3\eps_3pn} \right)^{\frac{1}{2}\eps_2pn^2} \\
    &\le n \left(\frac{en}{\eps_3n}\right)^{\eps_3n} \left( \frac{2epn^2}{\eps_2pn^2} \left(\frac{e\eps_3pn}{3\eps_3pn}\right)^{3\eps_3pn}\right)^{\frac{1}{2}\eps_2pn^2}
    = n \left(\frac{e}{\eps_3}\right)^{\eps_3n} \left( \frac{2e}{\eps_2} \left(\frac{e}{3}\right)^{3\eps_3pn}\right)^{\frac{1}{2}\eps_2pn^2} \\
    &\le n2^{-n\slnn}
    =o(n^{-5}).\qedhere
\end{align*}
\end{proof}

\end{document}